\providecommand{\orcidlogo}{}

\documentclass[pdflatex,sn-basic,Numbered]{sn-jnl}

\usepackage{graphicx}
\usepackage{amsmath,amssymb,amsfonts}
\usepackage{amsthm}
\usepackage{mathrsfs}
\usepackage{xcolor}
\usepackage{textcomp}
\usepackage{booktabs}
\usepackage{hyperref}

\theoremstyle{thmstyleone}

\newtheorem{theorem}{Theorem}[section]
\newtheorem{proposition}[theorem]{Proposition}
\newtheorem{lemma}[theorem]{Lemma}

\theoremstyle{thmstyletwo}
\newtheorem{remark}{Remark}[section]

\theoremstyle{thmstylethree}

\begin{document}

\title[Critical-Window Fluctuations for the SK Model]{Critical-Window Fluctuations and Disorder Universality for the Sherrington--Kirkpatrick Model}

\author[1]{\fnm{Yu} \sur{Cheng}}\email{12531173@mail.sustech.edu.cn}
\author[2]{\fnm{Song-Hao} \sur{Liu}}\email{liusonghao@dlut.edu.cn}
\author*[3]{\fnm{Qi-Man} \sur{Shao}}\email{shaoqm@sustech.edu.cn}
\author[1]{\fnm{Jing-Yu} \sur{Xu}}\email{12131253@mail.sustech.edu.cn}

\affil[1]{%
  \orgdiv{Department of Statistics and Data Science},
  \orgname{Southern University of Science and Technology},
  \orgaddress{
    \city{Shenzhen},
    \country{China}
  }
}
\affil[2]{%
  \orgdiv{School of Mathematical Sciences},
  \orgname{Dalian University of Technology},
  \orgaddress{
    \city{Dalian},
    \country{China}
  }
}

\affil*[3]{%
  \orgdiv{Department of Statistics and Data Science, Shenzhen International Center for Mathematics},
  \orgname{Southern University of Science and Technology},
  \orgaddress{
    \city{Shenzhen},
    \country{China}
  }
}

\abstract{
\unboldmath
We establish free-energy fluctuation limits for the Ising
Sherrington--Kirkpatrick model in the nonzero parts of its critical window.
For fixed $b\ne0$ and
$\beta_N=1+bN^{-1/3}\sqrt{\log N}$, our main Gaussian orthogonal ensemble (GOE) result is
\[
    \sqrt{\frac6{\log N}}\left(F_{N,\beta_N}-N\,\mathrm{FE}(\beta_N)+\frac{\log N}{12}\right)\xrightarrow{d}G+\sqrt{\frac32}\,b_+TW_1,
\]
where $G$ is standard Gaussian, $TW_1$ has the real Tracy--Widom law,
and $G$ is independent of $TW_1$, and $\mathrm{FE}$ denotes the limit of spherical Sherrington--Kirkpatrick free-energy.
Additionally, we show that in a moderately supercritical regime
\[
    \frac{2}{N^{1/3}(\beta_N-1)}\left(F_{N,\beta_N}-N\,\mathrm{FE}(\beta_N)+\frac{\log N}{12}\right)\xrightarrow{d}TW_1.
\]
We also show that the above results remains valid for independent, not necessarily
identically distributed, disorder matrices whose first three moments match the
Gaussian law and whose fourth moments satisfy an averaged bound.}

\keywords{Sherrington--Kirkpatrick model, free-energy fluctuations, critical window, non-Gaussian interaction, Tracy-Widom distribution}

\pacs[MSC Classification]{82B44, 60F05, 60B20, 82B26}

\maketitle

\section{Introduction}

Spin glasses are disordered magnetic systems in which random, competing interactions prevent the spins from forming a simple ordered state. 
The Sherrington--Kirkpatrick (SK) model, introduced by Sherrington and Kirkpatrick~\cite{SherringtonKirkpatrick}, is the canonical mean-field model of this phenomenon; see also Binder and Young~\cite{BinderYoung}. 
Its free energy is the central quantity describing the equilibrium behavior of the system: its large-$N$ limit determines the macroscopic thermodynamics, 
while its fluctuations measure the effect of disorder and become especially informative near a phase transition.

We now define the model and its free energy. For each $N$, let 
$W\sim\mathrm{GOE}(N)$.  Thus $W\in\mathbb R^{N\times N}$ is symmetric, its
entries on and above the diagonal are independent, and
\begin{align*}
    W_{ij}\sim\mathcal N\left(0,\frac1N\right),\quad i<j,
    \qquad
    W_{ii}\sim\mathcal N\left(0,\frac2N\right).
\end{align*}

Let $\Sigma_N=\{-1,1\}^N$ be the Ising configuration space.  The Hamiltonian is
\begin{align*}
    H_N(x)=\frac12x^TWx,
\end{align*}
and, at inverse temperature $\beta\geq0$, the normalized partition function
and corresponding free energy are defined as
\begin{align}\label{full GOE convention}
    Z_{N,\beta}=2^{-N}\sum_{x\in\Sigma_N}e^{\beta H_N(x)},
    \qquad
    F_{N,\beta}=\log Z_{N,\beta}.
\end{align}

Two basic questions concern the large-$N$ behavior of the free energy.  The
first is whether the free-energy density $N^{-1}F_{N,\beta}$ converges to a
deterministic limit.  The second is how the random free energy
$F_{N,\beta}$ fluctuates around its leading-order value.  This fluctuation
problem is a second-order refinement of the thermodynamic limit: it identifies
the scale of sample-to-sample disorder fluctuations and, at a phase
transition, can reveal the mechanism responsible for the change of phase.

There is a rich literature on both questions.
Parisi~\cite{Parisi} predicted the limiting free-energy density through a variational
formula. Guerra~\cite{Guerra} proved one side of this formula, and Talagrand~\cite{TalagrandParisi} proved the matching bound for the SK model. 
Panchenko~\cite{PanchenkoParisi} later extended the result to general mixed $p$-spin models. 
The first-order limit is therefore well understood.  
The fluctuation problem is much less complete.
For a recent account of free-energy fluctuations in the SK, spherical SK, and related two-spin models, see the survey \cite{CollinsWoodfinLe}.
For every fixed $0<\beta<1$, order-one Gaussian fluctuations for the zero-diagonal SK model were proved in \cite{AizenmanLebowitzRuelle,CometsNeveu}.  
Guerra and Toninelli obtained a related free-energy fluctuation result in the presence of a general external field, at a different fluctuation scale \cite{GuerraToninelli}.
For the full-GOE model defined above, the high-temperature central limit theorem takes the form
\begin{align}\label{high temperature CLT}
    F_{N,\beta}-\frac{N\beta^2}{4}
    \xrightarrow{d}
    \mathcal N\left(
        \frac14\log(1-\beta^2),
        -\frac12\log(1-\beta^2)
    \right).
\end{align}

The critical inverse temperature is $\beta_c=1$.  For every fixed $\beta>0$, Chatterjee~\cite{chatterjee2009disorder} proved the superconcentration bound
$\operatorname{Var}(F_{N,\beta})\leq C(\beta)N/\log N$.
At criticality, Talagrand's overlap estimate \cite[Chapter~11]{TalagrandMeanFieldII}, combined with Chatterjee's variance representation as explained in \cite{ChenLam}, gives the earlier bound
$\operatorname{Var}(F_{N,1})=O(\sqrt N)$.  Chen and Lam sharpened this to
$\operatorname{Var}(F_{N,1})=O((\log N)^2)$ and obtained a near-critical bound from the low-temperature side at a polynomial rate~\cite{ChenLam}.
A replica calculation predicted the sharper asymptotic
$\operatorname{Var}(F_{N,1})=\frac16\log N+O(1)$ \cite{Aspelmeier}, which was established by Du and Huang~\cite{DuHuang}.
For fixed $\beta>1$, however, the correct fluctuation scale remains a significant open problem; competing physical predictions are discussed in \cite{AspelmeierBilloireMarinariMoore,ParisiRizzo}.
This makes the neighborhood of $\beta=1$ a natural place to study how the
high-temperature Gaussian law transfers to the real Tracy--Widom law.


\subsection{The SSK model and the critical window}

A tractable analogue is the spherical SK model (SSK), introduced by
Kosterlitz, Thouless, and Jones~\cite{KosterlitzThoulessJones}.  The SSK model retains the mean-field
two-spin interaction while its rotational symmetry makes its free energy
amenable to random-matrix analysis.
Its leading-order free energy was rigorously computed by Talagrand~\cite{TalagrandSpherical}.
Its state space is 
\begin{align*}
    S_N=\{x\in\mathbb R^N:\|x\|_2^2=N\}.
\end{align*}

Let $\nu_N$ be the uniform probability measure on $S_N$.  Using the same
Hamiltonian, define the partition function and corresponding free energy of
the SSK model as
\begin{align*}
    Z_{N,\beta}^{sph}
    =\int_{S_N}e^{\beta H_N(x)}\,d\nu_N(x),
    \qquad
    F_{N,\beta}^{sph}=\log Z_{N,\beta}^{sph}.
\end{align*}

Baik and Lee~\cite{BaikLee} proved a sharp change in the SSK free-energy fluctuations.  
After proper scaling, $F_{N,\beta}^{sph}$ converges to Gaussian and Tracy--Widom laws in the high- and low-temperature regimes, respectively.
By comparing the orders of the variances of $F_{N,\beta}^{sph}$ in these two regimes, Baik and Lee~\cite{BaikLee} predicted the critical window in which the Gaussian law crosses over to the Tracy--Widom law:
\begin{align}\label{critical window}
    \beta_N=1+bN^{-1/3}\sqrt{\log N},
    \qquad b\in\mathbb R.
\end{align}

Landon~\cite{Landon} gave the first rigorous analysis of this window.  He
proved the Gaussian limit for $b\leq0$, obtained tightness for each fixed
$b>0$, and recovered the Tracy--Widom limit when $b\to+\infty$ while $\beta$ is bounded. 

Johnstone, Klochkov, Onatski, and Pavlyshyn~\cite{JKOP} then identified the limit for every fixed
$b\in\mathbb R$ for the real Wigner inputs satisfying the moment-matching
and regularity hypotheses of their theorem.

Define the limit of the SSK free energy by
\begin{align*}
    \mathrm{FE}(\beta)=
    \begin{cases}
        \beta^2/4, & 0<\beta\leq1,\\[2mm]
        \beta-\dfrac12\log\beta-\dfrac34, & \beta>1.
    \end{cases}
\end{align*}
Set $b_+=\max\{b,0\}$.  Let $G\sim\mathcal N(0,1)$, and let $TW_1$ be independent of $G$ with the real Tracy--Widom law~\cite{TracyWidom}.
Then
\begin{align}
    \label{sph limit}
    \sqrt{\frac6{\log N}}
    \left(
        F_{N,\beta_N}^{sph}
        -N\,\mathrm{FE}(\beta_N)
        +\frac{\log N}{12}
    \right)
    \xrightarrow{d}
    G+\sqrt{\frac32}\,b_+TW_1.
\end{align}
Thus the limit is Gaussian for $b\leq0$.  For $b>0$, it is the sum of
independent Gaussian and Tracy--Widom terms.

\subsection{Ising SK model and critical window}

For the zero-diagonal Ising SK model, Dey and Kang~\cite{DeyKang} proved a Gaussian central
limit theorem for inverse temperatures approaching $1$ from below.  Their result applies when
$N^{1/3}(1-\beta_N^2)$ converges to a positive number or to infinity.  In
particular, it covers $b<0$ in the window \eqref{critical window}. At the critical point
itself, Du and Huang~\cite{DuHuang} worked with the full-GOE convention \eqref{full GOE convention} and proved
\begin{align*}
    \operatorname{Var}(F_{N,1})=\frac16\log N+O(1)
\end{align*}
and an explicitly centered Gaussian central limit theorem. 

Their critical reweighting is in the spirit of the small subgraph conditioning
method (\cite{RobinsonWormald1992,RobinsonWormald1994}); 
see also its use in mean-field disordered models (for example \cite{AbbeLiSly,BencsHuangLeeLiuRegts}).

Their main idea is a critical reweighting that compares the Ising and SSK partition functions through a common overlap kernel.

They defineed $X_{N,\beta}=Z_{N,\beta}/Z_{N,\beta}^{sph}$ and proved
\begin{align*}
    \mathbb E\left[(X_{N,1}-1)^2\right]\leq CN^{-1/3}.
\end{align*}
Hence
\begin{align*}
    F_{N,1}-F_{N,1}^{sph}=\log X_{N,1}=o_{\mathbb P}(1),
\end{align*}
and the SSK critical central limit theorem transfers to the Ising SK model by Slutsky's lemma.

The same localization argument does not directly extend to the low-temperature side of the window. 
They expect that more delicate random-matrix input will be helpful to overcome this problem.

Recent complementary work has refined the understanding of critical SK fluctuations and overlaps. 
Logarithmic upper and iterated-logarithmic lower bounds for the critical variance were obtained in \cite{Schertzer2026}, 
an independent proof of the critical variance asymptotics in a different normalization appears in \cite{Chen2026}, 
and the critical overlap law was subsequently identified in \cite{DuHuangOverlap}.

\subsection{Our main contributions}

Our first result resolves the critical-window mixed-limit problem raised in \cite{DuHuang}.  
For every fixed $b\ne0$, and in the moderately supercritical regime stated below, the Ising free energy has the same limit as its SSK counterpart.  
The case $b<0$ follows from strong log-concavity, while the new issue is $b>0$, where the original log-concavity estimate no longer directly provides the required localization.
We prove a cubic localization bound by reducing the relevant free-energy increment to four random terms, each controlled by a separate random-matrix estimate.

Our second contribution is a quantitative transfer of these fluctuation theorems from GOE input to general symmetric random matrices.
For each $N$, let
$(\xi_{ij})_{1\leq i\leq j\leq N}$ be independent, but not necessarily
identically distributed, real random variables.  Set
\begin{align*}
    W_{ij}^{\boldsymbol\xi}=W_{ji}^{\boldsymbol\xi}=\frac{\xi_{ij}}{\sqrt N},\quad i<j,
    \qquad
    W_{ii}^{\boldsymbol\xi}=\frac{\sqrt2\,\xi_{ii}}{\sqrt N}.
\end{align*}

Assume that the first three moments of every $\xi_{ij}$ match those of a standard normal variable and that the fourth moments satisfy the uniform averaged bound
\begin{align*}
    \sup_N\frac1{N^2}\sum_{1\leq i\leq j\leq N}\mathbb E|\xi_{ij}|^4<\infty.
\end{align*}

At the level of the limiting free-energy density, Carmona and Hu~\cite{CarmonaHu} established disorder universality under a uniform third-moment condition; 
their Remark~3 also allows independent, non-identically distributed couplings.  
At the fluctuation level, Choo, Han, and Lee~\cite{ChooHanLee} recently established high-temperature Gaussian limits for generalized SK models under sub-Gaussian assumptions.  
Under the assumptions above, we prove critical-window fluctuation transfer for independent, not necessarily identically distributed couplings.  
The proof is a quantitative Lindeberg replacement argument, which also yields the fixed-critical-temperature and fixed-high-temperature theorems.

Furthermore, our method covers the moderately supercritical regime
\begin{align*}
    \beta_N=1+b_NN^{-1/3}\sqrt{\log N},\qquad
    b_N\to\infty,\qquad
    b_N=o\left(\frac{N^{1/12}}{\sqrt{\log N}}\right).
\end{align*}
In this moderately supercritical regime, the same comparison transfers the SSK Tracy--Widom limit, with the centering and normalization in Theorem~\ref{GOE mixed limit}\textup{(ii)}, 
to both the Ising and generalized models.

The remainder of the paper is organized as follows.  Section~2 states the fluctuation theorems.  
Section~3 reduces the main results to the sphere--cube and four-moment comparison propositions.
Sections~4 and~5 establish the variational, soft-edge, and density estimates that yield the positive-side cubic localization.  
Section~6 proves the random-matrix counting and edge inputs, and Section~7 treats the case $b<0$.

\section{Main results}

\subsection{Critical-window mixed limits}
Our first main result is the mixed-limit theorem for the Ising SK model, which
is parallel to the SSK case proved by Landon~\cite{Landon} and by Johnstone, Klochkov, Onatski, and Pavlyshyn~\cite{JKOP}, 
and answers the open question in Remark~2.3 of \cite{DuHuang}.
Our method is to prove $L^2$ convergence of $X_{N,\beta}$ at critical window.

\begin{theorem}[GOE critical and moderately supercritical limits]\label{GOE mixed limit}
    \textup{(i)} Let
    \begin{align*}
        \beta_N=1+bN^{-1/3}\sqrt{\log N}, \qquad b\in\mathbb{R}\setminus\{0\}.
    \end{align*}

    Then
    \begin{align*}
        \sqrt{\frac{6}{\log N}}\left(F_{N,\beta_N}-N\,\mathrm{FE}(\beta_N)+\frac{\log N}{12}\right) \xrightarrow{d} G+\sqrt{\frac32}\,b_+TW_1,
    \end{align*}
    where $G\sim\mathcal{N}(0,1)$, $TW_1$ has the GOE Tracy--Widom distribution, and $G$ and $TW_1$ are independent. $\mathrm{FE}(\beta_N)$ is the limit of SSK free energy.

    {
    \textup{(ii)} Let
    \begin{align*}
        \beta_N=1+b_NN^{-1/3}\sqrt{\log N},
    \end{align*}
    where
    \begin{align}\label{moderately supercritical condition}
        b_N\longrightarrow\infty,
        \qquad
        b_N=o\left(\frac{N^{1/12}}{\sqrt{\log N}}\right).
    \end{align}

    Then
    \begin{align*}
        \frac{2}{N^{1/3}(\beta_N-1)}
        \left(F_{N,\beta_N}-N\,\mathrm{FE}(\beta_N)+\frac{\log N}{12}\right)
        \xrightarrow{d}TW_1.
    \end{align*}
    }
\end{theorem}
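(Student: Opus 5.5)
The plan is to prove the statement by writing the Ising free energy as the spherical one plus a correction,
\[
F_{N,\beta_N}=F^{sph}_{N,\beta_N}+\log X_{N,\beta_N},\qquad X_{N,\beta}=Z_{N,\beta}/Z^{sph}_{N,\beta},
\]
following the critical reweighting of \cite{DuHuang}. Both parts (i) and (ii) of Theorem~\ref{GOE mixed limit} then split into two tasks. The first is a limit theorem for the spherical term with the stated centering and normalization. The second is the bound $\log X_{N,\beta_N}=o_{\mathbb P}(s_N)$, where $s_N$ is the normalizing scale: $s_N=\sqrt{\log N}$ in (i) and $s_N=N^{1/3}(\beta_N-1)=b_N\sqrt{\log N}$ in (ii). Slutsky's lemma then finishes the proof. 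In fact I will aim for the stronger statement $\mathbb E[(X_{N,\beta_N}-1)^2]\to0$, which gives $\log X_{N,\beta_N}=o_{\mathbb P}(1)$.

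\textbf{Spherical input.} For (i) this is exactly \eqref{sph limit}, which is proved by Landon~\cite{Landon} for $b\le0$ and by \cite{JKOP} for every fixed $b$.

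For (ii) I will rederive the spherical limit from the Baik--Lee contour-integral representation
\[
Z^{sph}_{N,\beta}\propto\oint e^{\frac N2 G(z)}\,dz,\qquad G(z)=\beta z-\frac1N\sum_i\log(z-\lambda_i).
\]
\begin{itemize}
\item Since $\beta_N-1\gg N^{-1/3}\sqrt{\log N}$, the saddle point is pushed to within $O(N^{-2/3})$ of $\lambda_1$.
\item A steepest-descent expansion around $z=\lambda_1+O(N^{-1})$ then gives
\[
F^{sph}_{N,\beta_N}-N\,\mathrm{FE}(\beta_N)+\frac{\log N}{12}=\frac{\beta_N-1}{2}\,N^{2/3}\bigl(N^{2/3}(\lambda_1-2)\bigr)\cdot N^{-1/3}+O_{\mathbb P}(\sqrt{\log N}).
\]
The $O_{\mathbb P}(\sqrt{\log N})$ term is the linear-statistic Gaussian part, already present for fixed $b$.
\item After dividing by $N^{1/3}(\beta_N-1)/2$, the first term converges to $TW_1$ by edge universality of $\lambda_1$.
\item The Gaussian part is of order $\sqrt{\log N}/(b_N\sqrt{\log N})\to0$.
\end{itemize}
The upper constraint $b_N=o(N^{1/12}/\sqrt{\log N})$ is what keeps the quadratic corrections $N(\beta_N-1)^2\cdot N^{-2/3}$ and the rigidity errors below the TW scale.

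\textbf{Ising--spherical comparison, case $b<0$.} Here $\beta_N<1$ and the strong log-concavity argument of \cite{DuHuang} applies. Write the overlap kernel through the Hubbard--Stratonovich representation on $\mathbb R^N$ with the Gaussian weight $e^{-\frac{1}{2}y^T(\beta^{-1}-W)y}$; it has curvature at least $1-\beta_N\ge N^{-1/3}$ up to the edge. The second moment $\mathbb E[X^2]$ reduces to a two-replica overlap integral. Bounding the overlap fluctuations by the inverse curvature gives $\mathbb E[(X-1)^2]\lesssim N^{-1/3}/|b|\sqrt{\log N}\to0$.

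\textbf{Case $b>0$ and part (ii): main obstacle.} This is the step I expect to be hardest, because the spherical Gibbs measure now condenses toward the top eigenvector and log-concavity fails. I would first condition on a high-probability rigidity event for the top eigenvalues and eigenvectors, and write the quadratic form in the eigenbasis. Then I would localize the reweighting integral to a window $|\langle x,v_1\rangle|^2/N\in[m_*-\delta,m_*+\delta]$ around the condensation mass $m_*\approx 2(\beta_N-1)$. The goal is a cubic localization bound: the free-energy increment of moving off the window grows like $N\delta^3$. I would split this increment into four random terms:
\begin{itemize}
\item the top-eigenvalue gap term;
\item the bulk Stieltjes-transform term $\frac1N\sum_{i\ge2}(z-\lambda_i)^{-1}$ near the edge;
\item the count of eigenvalues in $[2-\eta,\lambda_1]$;
\item the delocalization term of $v_1$ against the cube, i.e.\ the overlap with $\{\pm1\}^N$.
\end{itemize}
Each is controlled by a soft-edge random-matrix estimate: rigidity, local law, eigenvector delocalization, and an Airy-type density bound.

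On this window the residual field orthogonal to $v_1$ is again strongly log-concave with curvature of order $\beta_N-1$, so the $b<0$ argument goes through conditionally. The contribution of the complement is exponentially small. This gives $\mathbb E[(X-1)^2\mid\text{rigidity}]\to0$ uniformly for $b_N$ in the stated range, which completes the proof.
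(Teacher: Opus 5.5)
Your outer reduction matches the paper's. You write $F_{N,\beta_N}=F^{sph}_{N,\beta_N}+\log X_{N,\beta_N}$, cite \eqref{sph limit} for (i) and Landon's spherical Tracy--Widom limit for (ii) (you do not need to rederive it), prove $X_{N,\beta_N}\to1$ in $L^2$, and apply Slutsky. The gap is in the only step with real content: the comparison for $b>0$ and in regime (ii). Your sketch does not supply a mechanism that bounds $\mathbb E[X^2]$.

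You claim that, after localizing $\langle x,v_1\rangle^2/N$, ``the residual field is strongly log-concave with curvature $\beta_N-1$, so the $b<0$ argument goes through conditionally.'' This is unsupported. Ising configurations do not split into a $v_1$-coordinate and an independent orthogonal field, and you give no estimate for the conditional law. It also misses the basic new feature of $b>0$: both replicas condense onto $\pm v_1$. So the two-replica overlap law $\psi_N(q)=\rho_N(q)J_\beta(q)$ is \emph{not} concentrated near $0$; it lives on the scale $|q|\approx1-\beta^{-1}$. The $b<0$ argument needs $\kappa_N=N(1-\beta^2)-3>0$ to turn monotonicity of $e^{\kappa_Nq^2/2}\psi_N$ into a Gaussian overlap tail. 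For $b>0$ this $\kappa_N$ is negative, and the argument gives nothing. Separately, conditioning on eigenvector events such as delocalization of $v_1$ destroys the rotational invariance that the computation of $\mathbb E[X^2]$ relies on. That invariance is what makes the kernel depend only on the overlap, gives $\mathbb EX=1$, and yields the exact identity $\mathbb E[(X-1)^2]=\sum_{q\in\mathcal Q_N}p_N(q)J_\beta(q)-\int\rho_N(q)J_\beta(q)\,dq$. The paper conditions only on the spectrum.

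What is needed instead is two separate estimates.

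\textbf{Centre, $|q|\le L(1-\beta^{-1})$.} You accept the spread and control the lattice-versus-sphere quadrature error. This costs $N^{-1}+(\beta_N-1)+N(\beta_N-1)^4$, because $p_N(q)/((2/N)\rho_N(q))\approx e^{Nq^4/6}$. This is where $b_N=o(N^{1/12}/\sqrt{\log N})$ enters, not the spherical edge expansion. Your quadratic corrections $N^{1/3}(\beta_N-1)^2$ are $o(N^{1/3}(\beta_N-1))$ whenever $\beta_N\to1$.

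\textbf{Intermediate range, $L(1-\beta^{-1})\le|q|\le M\sqrt{\beta-1}$.} Here you need $\rho_N(q)J_\beta(q)\le N^Ce^{-cN\delta^3}$ with $\delta=\beta|q|-\beta+1$. The paper gets this in three steps:
\begin{enumerate}
\item Slicing the sphere gives $J_\beta(q)\le C\sqrt N\,\mathbb E\exp\{F^{sph}_{N,\beta(1+q)}+F^{sph}_{N,\beta(1-q)}-2F^{sph}_{N,\beta}+C_0\|W\|_{op}\}$.
\item Since $\beta(1-q)=1-\delta$ lies in the high-temperature phase, the semicircle three-temperature difference gives a deterministic barrier $\frac N2\bigl(\log(1-\delta)+\delta+\frac{\delta^2}{2}\bigr)\le-\frac{N\delta^3}{6}$.
\item The random corrections cost strictly less than this barrier. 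They are the edge response weighed against $\mathbb P(\lambda_1\ge2+x)\le Ce^{-c_0Nx^{3/2}}$, counting-function concentration, the contour density factor, and $\|W\|_{op}$.
\end{enumerate}
Your four terms (eigenvalue gap, Stieltjes transform, counts, eigenvector delocalization) are not this decomposition; eigenvectors never need to enter. Nothing in your sketch produces a cubic barrier or explains why it beats the edge fluctuations.

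Separately, for $b<0$ your Hubbard--Stratonovich weight $e^{-\frac12y^T(\beta^{-1}-W)y}$ is not integrable, since $\lambda_1(W)\approx2>\beta^{-1}$. The paper instead uses Pr\'ekopa--Leindler concavity of $\log K_\beta$ to get $\frac{d}{dq}\log J_\beta\le N\beta^2q$.
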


\begin{remark}[Normalization in the moderately supercritical regime]\label{rem:Landon-normalization}
Denote $\lambda_1$ be the largest eigenvalue of $W$.
Under the convention $N^{2/3}(\lambda_1-2)\xrightarrow{d}TW_1$, recombining the saddle expansion (5.9) and contour estimate (5.10) in Landon~\cite{Landon}, including the prefactor $N/2$ in the contour representation, gives the leading edge contribution $N(\beta_N-1)(\lambda_1-2)/2$.  This suggests the normalization $2/[N^{1/3}(\beta_N-1)]$, which is also consistent with \cite{BaikLee,JKOP}.  The displayed coefficients in (5.8), (5.11), and (5.17) of \cite{Landon} do not appear to be mutually consistent under this convention.
\end{remark}

For $b=0$, the corresponding GOE critical result is due to Du and Huang~\cite{DuHuang}.  The same work also observes that its localization argument can be adapted to the case $b<0$.

\subsection{General disorder: Universality}

For every $N$, let
\begin{align*}
    \mathcal E_N=\{\mathbf{i}=(r,s):1\leq r\leq s\leq N\}, \qquad M_N=|\mathcal E_N|=\frac{N(N+1)}{2}.
\end{align*}

Equip $\mathcal E_N$ with the lexicographic order $\prec$.  Thus, if $\mathbf{i}=(r,s)$ and $\mathbf{j}=(r',s')$ belong to $\mathcal E_N$, then
\begin{align*}
    \mathbf{i}\prec\mathbf{j} \quad\Longleftrightarrow\quad r<r' \quad\text{or}\quad (r=r'\ \text{and}\ s<s').
\end{align*}

For $\mathbf{i}=(r,s)\in\mathcal E_N$, set
\begin{align*}
    a_{\mathbf{i}}=\begin{cases}
        2, & r<s,\\
        \sqrt2, & r=s.
    \end{cases}
\end{align*}

Let $(\xi_{\mathbf{i}})_{\mathbf{i}\in\mathcal E_N}$ be independent real random variables, regarded as standardized matrix coordinates.  Define the symmetric disorder matrix $W^{\boldsymbol\xi}$ by
\begin{align*}
    W^{\boldsymbol\xi}_{rs}=W^{\boldsymbol\xi}_{sr}=\frac{\xi_{(r,s)}}{\sqrt N},\qquad r<s,
    \qquad
    W^{\boldsymbol\xi}_{rr}=\frac{\sqrt2\,\xi_{(r,r)}}{\sqrt N},\qquad 1\leq r\leq N.
\end{align*}

For a real random variable $Y$, set
\begin{align*}
    \tau_N(Y)=\mathbb{E}\left[|Y|^4\min\left\{1,\frac{|Y|}{\sqrt N}\right\}\right],
\end{align*}

Let
\begin{align*}
    \Xi_N=\frac1{N^2}\sum_{\mathbf{i}\in\mathcal E_N}\left(\left|\mathbb{E}[\xi_{\mathbf{i}}^4]-3\right|+\tau_N(\xi_{\mathbf{i}})\right).
\end{align*}

Define the Hamiltonian and the corresponding free energy as
\begin{align*}
    H_N^{\boldsymbol\xi}(\sigma)
    =\frac12\sigma^T W^{\boldsymbol\xi}\sigma
    =\frac{1}{2\sqrt{N}}\sum_{\mathbf{i}=(r,s)\in\mathcal E_N}a_{\mathbf{i}}\xi_{\mathbf{i}}\sigma_r\sigma_s,
    \qquad
    F_{N,\beta}^{\boldsymbol\xi}=\log\left(2^{-N}\sum_{\sigma\in\Sigma_N}e^{\beta H_N^{\boldsymbol\xi}(\sigma)}\right).
\end{align*}

We impose the first three moment matching condition for every standardized coordinate that
\begin{align}\label{first three moment matching}
    \mathbb{E}\xi_{\mathbf{i}}=0,\qquad\mathbb{E}[\xi_{\mathbf{i}}^2]=1,\qquad\mathbb{E}[\xi_{\mathbf{i}}^3]=0,
    \qquad \mathbf{i}\in\mathcal E_N.
\end{align}

\begin{theorem}[General critical and moderately supercritical limits]\label{four moment mixed limit}
    Assume \eqref{first three moment matching} and
    \begin{align}\label{critical moment conditions}
        \frac1{N^2}\sum_{\mathbf{i}\in\mathcal E_N}\mathbb E|\xi_{\mathbf{i}}|^4\leq C.
    \end{align}

    \textup{(i)} For every fixed $b\in\mathbb{R}\setminus\{0\}$ and
    \begin{align*}
        \beta_N=1+bN^{-1/3}\sqrt{\log N},
    \end{align*}
    one has
    \begin{align*}
        \sqrt{\frac{6}{\log N}}\left(F_{N,\beta_N}^{\boldsymbol\xi}-N\,\mathrm{FE}(\beta_N)+\frac{\log N}{12}\right) \xrightarrow{d} G+\sqrt{\frac32}\,b_+TW_1.
    \end{align*}

    Here $G\sim\mathcal N(0,1)$, $TW_1$ has the real Tracy--Widom law, and $G$ and $TW_1$ are independent. $\mathrm{FE}(\beta_N)$ is the limit of SSK free energy.

    \textup{(ii)} Let
        \begin{align*}
            \beta_N=1+b_NN^{-1/3}\sqrt{\log N},
        \end{align*}
    where $b_N$ satisfies \eqref{moderately supercritical condition}.

    Then
    \begin{align*}
        \frac{2}{N^{1/3}(\beta_N-1)}
        \left(F_{N,\beta_N}^{\boldsymbol\xi}-N\,\mathrm{FE}(\beta_N)+\frac{\log N}{12}\right)
        \xrightarrow{d}TW_1.
    \end{align*}
\end{theorem}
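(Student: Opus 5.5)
The plan is to deduce Theorem~\ref{four moment mixed limit} from the GOE result, Theorem~\ref{GOE mixed limit}, through a quantitative Lindeberg replacement applied to a smooth functional of the Ising free energy. In both regimes the limiting statement has the form $\mathcal N_N^{-1}(F_{N,\beta_N}-c_N)\xrightarrow{d}L$, where $\mathcal N_N=\sqrt{\log N/6}$ in (i) and $\mathcal N_N=N^{1/3}(\beta_N-1)/2$ in (ii). By the portmanteau theorem it therefore suffices to show that for every $f\in C_b^4(\mathbb R)$
\begin{align*}
\Bigl|\mathbb E f\bigl(\mathcal N_N^{-1}(F^{\boldsymbol\xi}_{N,\beta_N}-c_N)\bigr)-\mathbb E f\bigl(\mathcal N_N^{-1}(F_{N,\beta_N}-c_N)\bigr)\Bigr|\to0 .
\end{align*}
Note that $\mathcal N_N\to\infty$ in both regimes, which only helps.

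To prove this, I will order $\mathcal E_N$ by $\prec$ and swap one coordinate $\xi_{\mathbf i}$ for an independent standard Gaussian $g_{\mathbf i}$ at a time. Write $\Phi(t)=f(\mathcal N_N^{-1}(F(t)-c_N))$, where $F(t)$ is the free energy with coordinate $\mathbf i$ set to $t$ and all other coordinates frozen. Its derivatives are polynomials in the Gibbs cumulants of $h_{\mathbf i}=\beta a_{\mathbf i}\sigma_r\sigma_s/(2\sqrt N)$, so $|\partial_t^kF|\le C_kN^{-k/2}$ uniformly, and $|\Phi^{(k)}|\le C_k N^{-k/2}$ for $k\le 5$. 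I then Taylor expand each swap to fourth order around $t=0$. Since the first three moments of $\xi_{\mathbf i}$ and $g_{\mathbf i}$ agree and both are independent of the other coordinates, the terms of order $1,2,3$ cancel exactly. What remains is $\tfrac1{24}|\mathbb E\xi_{\mathbf i}^4-3|\,\mathbb E|\Phi^{(4)}(0)|$ plus a remainder. For the remainder I use the fifth derivative when $|\xi|\le\sqrt N$ and the fourth derivative otherwise, which gives exactly $O(N^{-2}\tau_N(\xi_{\mathbf i}))$.

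Summing over $\mathbf i$ bounds the total error by $C\,\Xi_N+C N^{-2}\sum_{\mathbf i}\mathbb E|g_{\mathbf i}|^4/\sqrt N$. This crude bound is $O(1)$ rather than $o(1)$, so the real content is to gain a vanishing factor. There are two sources for it:
\begin{enumerate}
\item the $\mathcal N_N^{-1}$ scaling, since every derivative of $\Phi$ carries at least one factor $\mathcal N_N^{-1}\to0$;
\item the fourth cumulant of $h_{\mathbf i}$, which is $O(N^{-2})$ but has a coefficient involving $\langle(\sigma_r\sigma_s)^2\rangle=1$ that is deterministic up to $O(N^{-2})$. The non-random pieces shift the mean of $F$ by a deterministic $O(N^{-2}\sum_{\mathbf i}(\mathbb E\xi^4-3))=O(1)$, which is negligible after dividing by $\mathcal N_N$.
\end{enumerate}
So the estimate gives an error of $C\mathcal N_N^{-1}(\Xi_N+1)$, and this tends to $0$ under \eqref{critical moment conditions}, because $\Xi_N\le C'$ there ($\tau_N(Y)\le\mathbb E|Y|^4$). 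Combined with Theorem~\ref{GOE mixed limit}, this yields both (i) and (ii).

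The main obstacle is the derivative bookkeeping, specifically making sure every chain-rule term of $\Phi^{(4)}$ and $\Phi^{(5)}$ really carries a factor $\mathcal N_N^{-1}$ on top of the $N^{-2}$ (resp.\ $N^{-5/2}$). The terms are of the form $f^{(j)}\mathcal N_N^{-j}\prod F^{(k_\ell)}$ with $\sum k_\ell=4$ and $j\ge1$, so this holds. I also need to check that the Gibbs cumulant bounds are uniform in $\beta_N$ up to the moderately supercritical range. This is automatic, since $\beta_N\le 2$ and the cumulants of bounded spins are bounded regardless of temperature. If the naive $O(\mathcal N_N^{-1})$ rate turns out to be too weak somewhere, for example because $\mathcal N_N$ grows slowly in (i), then it still suffices, since only convergence to zero is needed.
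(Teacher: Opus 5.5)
Your proposal is correct and is essentially the paper's argument. The paper proves the same Lindeberg bound $C(\Xi_N+N^{-1/2})\sum_{r=1}^5 s_N^{-r}\|\varphi^{(r)}\|_\infty$ in Proposition~\ref{four moment comparison} and concludes from $\Xi_N\le C$ and $s_N\to\infty$; the only difference is that it routes the last step through a Kolmogorov-distance smoothing, using the bounded density of the limit, where you apply smooth test functions directly. Two small slips: you need $f\in C_b^5$ rather than $C_b^4$, since your remainder uses five derivatives (this still suffices for convergence in distribution), and your second ``source'' of gain from the nearly deterministic fourth cumulant is unnecessary, because the $\mathcal N_N^{-1}$ factor alone closes the argument.
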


\begin{theorem}[Fixed high-temperature fluctuation]\label{fixed high temperature}
    Assume \eqref{first three moment matching} and
    \begin{align}\label{comparison condition} 
        \Xi_N\longrightarrow0,
    \end{align}

    Then, for every fixed $0<\beta<1$,
    \begin{align*}
        F_{N,\beta}^{\boldsymbol\xi}-\frac{N\beta^2}{4}\xrightarrow{d}\mathcal{N}\left(m_\beta,\widehat v_\beta\right),
    \end{align*}
    where
    \begin{align*}
        m_\beta=\frac14\log(1-\beta^2),\qquad \widehat v_\beta=-\frac12\log(1-\beta^2).
    \end{align*}
\end{theorem}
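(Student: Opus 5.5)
The plan is a Lindeberg replacement that transfers the GOE high-temperature central limit theorem \eqref{high temperature CLT} to $F^{\boldsymbol\xi}_{N,\beta}$. Since $\beta<1$ is fixed and the limit is of order one, no SSK comparison is needed. Let $g_{\mathbf i}$ be i.i.d.\ standard Gaussians, so that $W^{\boldsymbol g}\sim\mathrm{GOE}(N)$ and $F^{\boldsymbol g}_{N,\beta}$ obeys \eqref{high temperature CLT}. It suffices to show that
\begin{align*}
    \bigl|\mathbb E\,\phi(F^{\boldsymbol\xi}_{N,\beta}-N\beta^2/4)-\mathbb E\,\phi(F^{\boldsymbol g}_{N,\beta}-N\beta^2/4)\bigr|\to0
\end{align*}
for every $\phi\in C^5_b(\mathbb R)$ with bounded derivatives.

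Order $\mathcal E_N$ by $\prec$. For each $\mathbf i$, define the hybrid vector $\boldsymbol\zeta^{(\mathbf i)}$ that uses $\xi_{\mathbf j}$ for $\mathbf j\prec\mathbf i$, $g_{\mathbf j}$ for $\mathbf j\succ\mathbf i$, and a free variable $t$ at position $\mathbf i$. Set $f_{\mathbf i}(t)=\phi(F_{N,\beta}(\boldsymbol\zeta^{(\mathbf i)}(t))-N\beta^2/4)$. The derivatives in $t$ come from $\partial_t F=\beta a_{\mathbf i}N^{-1/2}\langle\sigma_r\sigma_s\rangle/2$. Higher derivatives are cumulants of $\sigma_r\sigma_s$ under the Gibbs measure, each bounded by a constant because the spins are $\pm1$. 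Hence $|f_{\mathbf i}^{(k)}|\le C_k N^{-k/2}$.

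Taylor expand $f_{\mathbf i}$ to order four around $t=0$. The terms of order $0$ through $3$ cancel between $\xi_{\mathbf i}$ and $g_{\mathbf i}$ by \eqref{first three moment matching}, using independence of the swapped coordinate from the rest. The fourth-order term gives
\begin{align*}
    \tfrac1{24}\,\mathbb E f^{(4)}_{\mathbf i}(0)\bigl(\mathbb E\xi_{\mathbf i}^4-3\bigr)=O\bigl(N^{-2}|\mathbb E\xi_{\mathbf i}^4-3|\bigr).
\end{align*}
For the remainder, use the fifth-order bound $CN^{-5/2}|\xi|^5$ when $|\xi|\le\sqrt N$. When $|\xi|>\sqrt N$, expand only to fourth order and bound the remainder by $2\sup|f^{(4)}|\,\xi^4/24\le CN^{-2}\xi^4$. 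Together these give a remainder of $CN^{-2}\tau_N(\xi_{\mathbf i})$, and the Gaussian side contributes $O(N^{-5/2})$. Summing over the $M_N\sim N^2/2$ coordinates, the total error is $C\,\Xi_N+O(N^{-1/2})$, which tends to $0$ by \eqref{comparison condition}. Weak convergence then follows since $C^5_b$ test functions are convergence determining.

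The main obstacle is that the derivative bound must be \emph{uniform} in $N$. Naively, $f^{(4)}$ involves $\phi^{(j)}$ composed with products of $\partial^k_tF$ summing to $4$, and each $\partial^k_t F$ is a $k$-th Gibbs cumulant of $\sigma_r\sigma_s$ times $(\beta a_{\mathbf i}/2)^kN^{-k/2}$. Because only the single coordinate $t$ varies, no overlap or high-temperature control is needed: cumulants of a bounded variable are bounded absolutely. So the bound holds for every $\beta$, and $\beta<1$ enters only through \eqref{high temperature CLT}.

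One subtlety is the unbounded $t$ range in the remainder. This is handled because the derivative bounds hold for all real $t$. The only other check is that the diagonal scaling $a_{\mathbf i}=\sqrt2$ is consistent with the GOE normalization, so that $W^{\boldsymbol g}$ is exactly GOE.
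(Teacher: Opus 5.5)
Your proposal is correct and is essentially the paper's proof: the paper applies its four-moment Lindeberg replacement (Proposition~\ref{four moment comparison} with $s_N=1$, $a_N=N\beta^2/4$) to transfer the GOE central limit theorem \eqref{high temperature CLT}, using the same ordering $\prec$, the same fourth-order Taylor expansion and the same $\tau_N$-type remainder, and it obtains the uniform $N^{-m/2}$ derivative bounds from the explicit $\log\cosh$ form of the one-coordinate partition function, which is equivalent to your Gibbs-cumulant argument for the $\pm1$ variable $\sigma_r\sigma_s$. The only cosmetic difference is at the end: the paper passes to Kolmogorov distance by smoothing, using the bounded Gaussian density, whereas you conclude directly because smooth bounded test functions are convergence determining; both are fine here since $s_N=1$ is fixed.
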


\begin{theorem}[Fixed critical temperature]\label{fixed critical temperature}
    Assume \eqref{first three moment matching} and \eqref{critical moment conditions}.
    Then
    \begin{align*}
        \frac{F_{N,1}^{\boldsymbol\xi}-N/4+(\log N)/12} {\sqrt{(\log N)/6}} \xrightarrow{d}\mathcal{N}(0,1).
    \end{align*}
\end{theorem}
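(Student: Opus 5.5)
The plan is to reduce Theorem~\ref{fixed critical temperature} to the GOE statement at $\beta=1$ (Du and Huang~\cite{DuHuang}) by a quantitative Lindeberg replacement of the coordinates $\xi_{\mathbf i}$ by independent standard Gaussians $g_{\mathbf i}$, one coordinate at a time along the order $\prec$ on $\mathcal E_N$. Since the GOE result gives
\begin{align*}
    \frac{F_{N,1}-N/4+(\log N)/12}{\sqrt{(\log N)/6}}\xrightarrow{d}\mathcal N(0,1),
\end{align*}
it suffices to show that for every smooth test function $\varphi$ with bounded derivatives up to order four,
\begin{align*}
    \Bigl|\mathbb E\,\varphi\bigl(\Phi(\boldsymbol\xi)\bigr)-\mathbb E\,\varphi\bigl(\Phi(\mathbf g)\bigr)\Bigr|\longrightarrow0,
    \qquad \Phi(\mathbf x)=\frac{F^{\mathbf x}_{N,1}-N/4+(\log N)/12}{\sqrt{(\log N)/6}}.
\end{align*}
Because the normalization only helps, and the deterministic centering is common to both sides, I would work directly with $\varphi$ composed with $s\,F^{\mathbf x}_{N,1}$, where $s=(6/\log N)^{1/2}\le1$.

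For the replacement, let $\mathbf z^{(\mathbf i)}$ be the hybrid vector in which coordinates $\prec\mathbf i$ are Gaussian and those $\succeq\mathbf i$ are $\xi$. Write $f(t)=\varphi(\cdot)$ evaluated with the $\mathbf i$-th coordinate set to $t$. Derivatives of $F$ in a single coordinate $t=x_{\mathbf i}$, with $\mathbf i=(r,s)$, are cumulants of $\frac{a_{\mathbf i}}{2\sqrt N}\sigma_r\sigma_s$ under the Gibbs measure. They are therefore bounded by $C N^{-k/2}$ for the $k$-th derivative, uniformly in all other coordinates, since $|\sigma_r\sigma_s|=1$. By the chain rule, $|\partial_t^k(\varphi\circ sF)|\le C_\varphi N^{-k/2}$ for $k\le 5$.

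Taylor-expand each one-coordinate swap to fourth order around $t=0$. The first three orders cancel by \eqref{first three moment matching}, since the remaining coordinates are independent of the swapped one. The fourth-order term contributes at most $C N^{-2}(\mathbb E\xi_{\mathbf i}^4+3)$. The remainder can be taken either at fourth order with a difference, or at fifth order on $\{|\xi|\le\sqrt N\}$, giving a $\tau_N$-type bound; both are dominated by the same quantity. Summing over the $M_N\sim N^2/2$ coordinates gives a total error $C N^{-2}\sum_{\mathbf i}(\mathbb E|\xi_{\mathbf i}|^4+3)=O(1)$ by \eqref{critical moment conditions}. That is not $o(1)$, and this is the main obstacle.

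To fix this, I would exploit the cumulant structure more carefully at the fourth order. The fourth derivative of $F$ in $x_{\mathbf i}$ is $N^{-2}$ times the fourth Gibbs cumulant of $\sigma_r\sigma_s$. Summed over $\mathbf i$, it is controlled by the moments $\langle R_{12}^2\rangle$, $\langle R_{12}^4\rangle$ of the overlap, and similar quantities. At $\beta=1$ these are small: Talagrand's critical overlap bound (as used in~\cite{ChenLam,DuHuang}) gives $\mathbb E\langle R_{12}^2\rangle\lesssim N^{-2/3}$ up to logarithmic factors. Since $\sigma_r^2=1$, the fourth cumulant equals $1-\langle\sigma_r\sigma_s\rangle^2\cdot(\dots)$ plus terms in $\langle\sigma_r\sigma_s\rangle$. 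The leading constant part is deterministic and identical in $F$ for both inputs, so it only shifts the mean.

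Concretely, I would split the fourth-order contribution into two pieces. The first is $\varphi'$ times the deterministic part of $\partial^4F$; because it carries the factor $s$, its total is $s\,N^{-2}\sum_{\mathbf i}(\mathbb E\xi_{\mathbf i}^4-3)=O((\log N)^{-1/2})$, which is $o(1)$. The second contains the fluctuating cumulant terms and the $\varphi''$-type products of lower derivatives. These carry either a factor $s^2\le 6/\log N$ or the overlap moments, whose sum is $o(N^2)$ by the critical overlap estimate. This extra factor $s$ from the normalization, combined with the critical overlap smallness, is the crux. If the overlap bound under hybrid disorder is unavailable, I would first establish it by the same interpolation, applied to $\mathbb E\langle R_{12}^2\rangle$ itself.
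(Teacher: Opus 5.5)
Your strategy is the paper's: a one-coordinate Lindeberg replacement with three matched moments, followed by transfer of the Du--Huang GOE limit. This is Proposition~\ref{four moment comparison} applied with $s_N=\sqrt{(\log N)/6}$.

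However, the ``main obstacle'' you identify comes from your own derivative bound. When you write $|\partial_t^k(\varphi\circ sF)|\le C_\varphi N^{-k/2}$, you discard $s$ via $s\le 1$. By Fa\`a di Bruno, $\partial_t^k\varphi(sF)$ is a sum of terms $s^\ell\varphi^{(\ell)}(sF)\prod_{j=1}^{\ell}F^{(k_j)}$ with $k_1+\dots+k_\ell=k$. Each product is $O(N^{-k/2})$, and crucially $\ell\ge1$ in every term. Hence $|\partial_t^k(\varphi\circ sF)|\le C_k\,s\,N^{-k/2}\max_{\ell\le k}\|\varphi^{(\ell)}\|_\infty$. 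The fourth-order Taylor term and the remainder together then give a total error at most $Cs(\Xi_N+N^{-1/2})=O((\log N)^{-1/2})$. This uses nothing beyond $\Xi_N\le C$ from \eqref{critical moment conditions}, and it is exactly the paper's bound $C_B(\Xi_N+N^{-1/2})\sum_{r}s_N^{-r}\|\varphi^{(r)}\|_\infty$. No cumulant structure or overlap information is needed.

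The fix you actually propose does not close as written, for three reasons.

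\begin{itemize}
\item It revisits only the fourth-order Taylor term. The remainder, which you had bounded by $O(1)$ in total, is never addressed.
\item For the fluctuating part of the fourth Gibbs cumulant, you invoke a critical overlap bound under the hybrid disorder. That bound is not available: Talagrand's estimate is proved for Gaussian disorder. Your fallback of proving it ``by the same interpolation'' fails, because with crude derivative bounds each swap in $\mathbb E\langle R_{12}^2\rangle$ costs $O(N^{-2})$. Over $\sim N^2/2$ coordinates this is $O(1)$, far above the polynomially small target.
\item The identity $\sum_{r,s}\langle\sigma_r\sigma_s\rangle^2=N^2\langle R_{12}^2\rangle$ needs a single Gibbs measure. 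In the telescoping sum, each $\mathbf i$ carries its own environment $\mathbf z^{(\mathbf i)}$ with the $\mathbf i$-th coordinate zeroed, so the sum over $\mathbf i$ does not directly produce one overlap.
\end{itemize}

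All of this is moot once you keep $s$. Those fluctuating terms are multiplied by $s\varphi'(sF)$, so they are $O(sN^{-2})$ per coordinate whatever the overlap does, and the same holds for the remainder.

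Finally, since your remainder uses fifth-order Taylor bounds, take $\varphi$ with five bounded derivatives, not four. Convergence for such $\varphi$, together with the GOE limit, gives the claim. Alternatively, as in the paper, pass to Kolmogorov distance by smoothing.
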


\section{Proofs of the main results}

The proofs of the main theorems are reduced to a comparison between the Ising partition function and its spherical counterpart. 
Following the comparison viewpoint in \cite{DuHuang}, we isolate this step as an $L^2$ estimate for $X_{N,\beta}=Z_{N,\beta}/Z_{N,\beta}^{sph}$; 
once this ratio converges to one, the mixed-limit laws for the GOE model transfer directly from the spherical results. 
The proposition below records the form needed throughout the critical window.

\begin{proposition}[Sphere--cube comparison]\label{L2 comparison}
    Let
    \begin{align*}
        X_{N,\beta}=Z_{N,\beta}/Z_{N,\beta}^{sph}.
    \end{align*}

    Suppose that
    \begin{align*}
        \beta_N=1+b_NN^{-1/3}\sqrt{\log N},
    \end{align*}
    where, for some $b_0>0$,
    \begin{align*}
        b_N\geq b_0,
        \qquad
        b_N=o\left(\frac{N^{1/12}}{\sqrt{\log N}}\right).
    \end{align*}
    Then, for all sufficiently large $N$,
    \begin{equation}
        \begin{aligned}\label{moderately supercritical L2 bound}
            \mathbb{E}\left[(X_{N,\beta_N}-1)^2\right]
            &\leq C\left(N^{-1}+(\beta_N-1)+(\beta_N-1)^2+N(\beta_N-1)^4\right)\\
            &\quad+N^C\exp\{-cN(\beta_N-1)^3\}+N^C\exp\{-cN(\beta_N-1)^2\}.
        \end{aligned}
    \end{equation}

    If $b_N=b<0$, then, for all sufficiently large $N$,
    \begin{align*}
        \mathbb{E}\left[(X_{N,\beta_N}-1)^2\right]\leq C_bN^{-1/3}(\log N)^{-1/4}.
    \end{align*}
    Consequently, $X_{N,\beta_N}\xrightarrow{L^2}1$ in both preceding cases.
\end{proposition}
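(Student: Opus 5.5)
We follow the critical reweighting of Du and Huang, which writes $X_{N,\beta}$ as an average over the Gibbs measure of the spherical model. Since $\Sigma_N\subset S_N$, the uniform measure on $\Sigma_N$ can be compared with $\nu_N$ through the overlap kernel. Write $\langle\cdot\rangle^{sph}_\beta$ for the SSK Gibbs measure. Then
\[
X_{N,\beta}=\Big\langle \rho_N(x)\Big\rangle^{sph}_\beta,\qquad \rho_N=\frac{d\mu_{\Sigma}}{d\nu_N}\ \text{(in a smoothed sense)}.
\]
The proof then reduces to a two-replica computation, using
\[
\mathbb E[(X-1)^2]=\mathbb E X^2-2\mathbb E X+1 .
\]
Concretely, we express $\mathbb E X_{N,\beta}^2$ through an integral against the SSK two-replica overlap law $R_{12}=x^1\cdot x^2/N$. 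The cube contributes the factor $\mathbb E_\sigma e^{\theta \sigma\cdot x/\sqrt N\cdots}$, whose expansion in the overlap gives $\prod_i\cosh$-type corrections. These corrections equal $1+O(N^{-1})$ on the event $\{|R_{12}|\lesssim N^{-1/2}\}$ and are controlled by $e^{CN R_{12}^4}$ in general.

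For $b<0$ the plan is to run exactly the Du--Huang strong log-concavity argument. In the spectral coordinates the SSK Gibbs measure after the Lagrange multiplier (Kosterlitz--Thouless--Jones saddle $\gamma$ with $\gamma-\lambda_1\gtrsim N^{-1/3}\sqrt{\log N}\,|b|$) is a Gaussian with covariance $(\gamma-W)^{-1}/\beta$. Its log-concavity constant is $\gtrsim |b|N^{-1/3}\sqrt{\log N}$ with high probability, by the edge rigidity of $\lambda_1$. This gives $\langle R_{12}^2\rangle\lesssim N^{-1}\operatorname{tr}(\gamma-W)^{-2}/N$, and the sum of these deviations yields $N^{-1/3}(\log N)^{-1/4}$ after taking expectations on the rigidity event. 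The complementary event is handled by the crude bound $X\le e^{CN^{\epsilon}}$ times a probability bounded by $e^{-N^c}$.

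For $b_N\ge b_0>0$ log-concavity fails, because $\gamma$ sticks to $\lambda_1$ and the measure concentrates near $\pm v_1$. Instead we decompose $x=\sqrt{N}(\pm s\, v_1)+y$ with $y\perp v_1$. The plan proceeds in three steps.

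(a) We prove a variational bound showing that $s^2$ localizes at $1-1/\beta_N+O(\ldots)$, with large-deviation cost $\exp\{-cN(\beta_N-1)^3\}$ off a cubic neighborhood. This comes from the cubic Taylor expansion of the spherical free energy in $\beta-1$, and it produces the term $N^C e^{-cN(\beta_N-1)^3}$ in \eqref{moderately supercritical L2 bound}.

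(b) On the bulk part $y$, the measure is log-concave with constant $\gtrsim \beta_N-1$ via soft-edge estimates on $\lambda_1-\lambda_2$ and on $\sum_{k\ge2}(\lambda_1-\lambda_k)^{-1}$. These estimates give overlap contributions of order $(\beta_N-1)$ and $(\beta_N-1)^2$, with failure probability $N^Ce^{-cN(\beta_N-1)^2}$.

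(c) The condensed direction contributes $\mathbb E_\sigma \exp(\ldots s\,\sigma\cdot v_1\ldots)$. We expand this to fourth order, using delocalization of $v_1$ ($\|v_1\|_\infty\le N^{-1/2+\epsilon}$) and $\sum_i v_{1,i}^4\lesssim N^{-1}$. Here $s^2\asymp \beta_N-1$, so the quartic term is $N s^4\cdot N^{-1}\cdots$, which produces the $N(\beta_N-1)^4$ error.

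Collecting the four random terms — edge gap, resolvent trace, eigenvector delocalization and the density of $\lambda_1$ — gives \eqref{moderately supercritical L2 bound}. Under $b_N=o(N^{1/12}/\sqrt{\log N})$ we have $N(\beta_N-1)^4=o(1)$, while $N(\beta_N-1)^3\ge b_0^3(\log N)^{3/2}$, so all terms vanish.

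The main obstacle is step (a)–(b) jointly: the random-matrix inputs must hold with probabilities strong enough to beat the polynomial prefactors $N^C$, including moderate-deviation bounds for $N^{2/3}(\lambda_1-2)$ and for small gaps $\lambda_1-\lambda_2$. The most delicate of these is the lower tail of the gap. I would first try to handle it by integrating against the known GOE gap density bounds (level repulsion $\mathbb P(N^{2/3}(\lambda_1-\lambda_2)<\delta)\lesssim\delta^2$), truncating where the gap is tiny and using the trivial bound $X\le 2^{N}$-type estimates only on exponentially rare sets.
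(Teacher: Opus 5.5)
There is a genuine gap. Your starting representation does not exist, and it hides the ingredient the proof actually runs on.

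\textbf{No quenched representation.} The uniform measure on $\Sigma_N$ is singular with respect to $\nu_N$, so no smoothing makes $X_{N,\beta}=\langle\rho_N(x)\rangle^{sph}_\beta$ exact. The only exact handle is annealed. By orthogonal invariance, $\mathbb E X_{N,\beta}=1$, and $J_\beta(q)=\mathbb E[e^{\beta H_N(x)+\beta H_N(y)}/(Z^{sph}_{N,\beta})^2]$ depends only on $q=R(x,y)$. This gives
\[
\mathbb E[(X_{N,\beta}-1)^2]=\sum_{q\in\mathcal Q_N}p_N(q)J_\beta(q)-\int_{-1}^{1}\rho_N(q)J_\beta(q)\,dq .
\]
Comparing the lattice sum with the integral needs regularity of $J_\beta$ on scale $1/N$: namely $|\partial_q\log J_\beta(q)|\le N\beta^2|q|$ and $J_\beta(q)\le e^{N\beta^2q^2/2}$. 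The paper gets these from a Cameron--Martin shift plus Pr\'ekopa--Leindler (concavity of $\log K_\beta$). Nothing in your plan replaces this.

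This bound is also what produces the per-cell error $|q|$. That error is the source of the $(\beta_N-1)$ term, since for $b>0$ the annealed overlap lives on the scale $|q|\approx1-\beta_N^{-1}$. For $b<0$ it gives the rate $\kappa_N^{-1/2}$, of order $N^{-1/3}(\log N)^{-1/4}$, with $\kappa_N=N(1-\beta_N^2)-3$. The $N(\beta_N-1)^4$ term is the cube--sphere entropy mismatch $p_N(q)/((2/N)\rho_N(q))\approx e^{Nq^4/6}$ at $|q|$ of order $\beta_N-1$. Your quartic term $Ns^4\cdot N^{-1}$ is only of order $(\beta_N-1)^2$. Once the derivative bound is available, $b<0$ needs no random-matrix input: $e^{\kappa_Nq^2/2}\rho_N(q)J_\beta(q)$ is nonincreasing on $[0,1)$. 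This yields Gaussian tails and $\int|q|^k\rho_NJ_\beta\le C_k\kappa_N^{-k/2}$.

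\textbf{The bad-event step fails.} The crude bound is only $X_{N,\beta}\le e^{\beta N\|W\|_{op}}=e^{O(N)}$, not $e^{CN^{\epsilon}}$. Meanwhile the events you use fail far more often than $e^{-N^c}$. The event $\lambda_1\le2+cb^2N^{-2/3}\log N$ fails with probability about $e^{-c|b|^3(\log N)^{3/2}}$. Small values of $\lambda_1-\lambda_2$ are only polynomially unlikely by level repulsion. Neither can be discarded against $e^{O(N)}$, unless you separately prove a polynomial bound on a higher moment of $X_{N,\beta}$, which is a problem of the same type.

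\textbf{Quantitative errors.} With covariance $(\gamma-W)^{-1}/\beta$, the saddle gives $\gamma-2\approx(1-\beta)^2$, of order $b^2N^{-2/3}\log N$, not $|b|N^{-1/3}\sqrt{\log N}$. For $b>0$, the Hessian on $v_1^{\perp}$ has constant of order $\lambda_1-\lambda_2\approx N^{-2/3}$, not at least $c(\beta_N-1)$.

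\textbf{The missing idea for $b>0$.} Stay annealed and use exponential moments instead of high-probability events. The paper bounds $J_\beta(q)\le C\sqrt N\,\mathbb E\exp\{\Delta F^{sph}_N(q)+C_0\|W\|_{op}\}$, using the three-temperature increment at $\beta(1\pm q)$ and $\beta$. It expands this through the Baik--Lee contour and isolates a deterministic cubic barrier
\[
\tfrac N2[\log(1-\delta)+\delta+\delta^2/2]\le-\tfrac N6\delta^3,\qquad \delta=\beta|q|-\beta+1 .
\]
This barrier beats the exponential moments of the four random terms:
\begin{itemize}
\item the counting-function deviation;
\item the edge response, where the soft-edge tail gives $\mathbb E e^{r\mathcal R_N}\le N^{C_r}e^{N\delta^3r^3/(6(r+3c_0)^2)}$, strictly below the barrier after H\"older;
\item the contour factor, which is a weighted chi-square density bounded between $c/N$ and $C(1+\|W\|_{op})$ with no gap input;
\item $\|W\|_{op}$.
\end{itemize}
The outcome is $\rho_N(q)J_\beta(q)\le N^Ce^{-cN\delta^3}$ for $u_\beta\le|q|\le v_\beta$, which yields the $N^Ce^{-cN(\beta_N-1)^3}$ term. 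Your steps (a)--(c) never produce a bound at the level of the overlap kernel.
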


The case $b>0$ is proved in Subsection~\ref{subsec:positive-proposition-proofs}, 
with the auxiliary estimates in Sections~\ref{sec:further-positive-lemmas}--\ref{sec:random-matrix-input-proofs}; 
the case $b=0$ of the corresponding GOE comparison is due to Du and Huang~\cite{DuHuang}; and the case $b<0$ is proved in Section~\ref{sec:negative-window-proof}.

\begin{proof}[Proof of Theorem~\ref{GOE mixed limit}\textup{(i)}]
    Proposition~\ref{L2 comparison} gives $F_{N,\beta_N}-F_{N,\beta_N}^{sph}=\log X_{N,\beta_N}=o_{\mathbb{P}}(1)$.
    The result follows from \eqref{sph limit} and Slutsky's lemma.
\end{proof}

\begin{proof}[Proof of Theorem~\ref{GOE mixed limit}\textup{(ii)}]
    {
    Proposition~\ref{L2 comparison} gives
    \begin{align*}
        X_{N,\beta_N}\xrightarrow{L^2}1.
    \end{align*}

    Hence
    \begin{align*}
        F_{N,\beta_N}-F_{N,\beta_N}^{sph}=\log X_{N,\beta_N}=o_{\mathbb P}(1).
    \end{align*}

    The condition \eqref{moderately supercritical condition} implies $\beta_N\to1$ and lies in the moderately supercritical range considered by Landon~\cite{Landon}.  In the present total-free-energy normalization, and with the normalization discussed in Remark~\ref{rem:Landon-normalization}, the corresponding spherical result gives
    \begin{align*}
        \frac{2}{N^{1/3}(\beta_N-1)}
        \left(F_{N,\beta_N}^{sph}-N\,\mathrm{FE}(\beta_N)+\frac{\log N}{12}\right)
        \xrightarrow{d}TW_1.
    \end{align*}

    Since
    \begin{align*}
        N^{1/3}(\beta_N-1)=b_N\sqrt{\log N}\longrightarrow\infty,
    \end{align*}
    the result follows from Slutsky's lemma.
    }
\end{proof}

\begin{proof}[Proof of Theorem~\ref{four moment mixed limit}\textup{(i)}]
    For random variables $U,V$, write
    \begin{align*}
        d_{\mathrm K}(U,V)=\sup_{x\in\mathbb R}\left|\mathbb P(U\leq x)-\mathbb P(V\leq x)\right|.
    \end{align*}

    Let
    \begin{align*}
        s_N=\sqrt{\frac{\log N}{6}},\qquad a_N=N\,\mathrm{FE}(\beta_N)-\frac{\log N}{12},\qquad Z_b=G+\sqrt{\frac32}\,b_+TW_1.
    \end{align*}

    Theorem~\ref{GOE mixed limit} gives
    \begin{align*}
        \frac{F_{N,\beta_N}-a_N}{s_N}\xrightarrow{d}Z_b.
    \end{align*}

    Since $Z_b$ is the convolution of a standard Gaussian random variable with an independent random variable, 
    it admits a Lebesgue density bounded by $(2\pi)^{-1/2}$. 
    Thus
    \begin{align}\label{convergence of gaussian case} 
        d_{\mathrm K}\left( \frac{F_{N,\beta_N}-a_N}{s_N},Z_b \right)\longrightarrow0.
    \end{align}

    We record the quantitative replacement estimate here,
    where it is first used.
    
    \begin{proposition}[Quantitative four-moment transfer]\label{four moment comparison}
        Assume \eqref{first three moment matching}. Fix $B<\infty$. If
        $|\beta_N|\leq B$, $s_N\geq1$, and $a_N$ is deterministic, then every
        $\varphi\in C_b^5(\mathbb{R})$ with bounded first five derivatives
        satisfies
        \begin{align}\label{four moment comparison estimate}
            \left|\mathbb{E}\varphi\left(\frac{F_{N,\beta_N}^{\boldsymbol\xi}-a_N}{s_N}\right)-\mathbb{E}\varphi\left(\frac{F_{N,\beta_N}-a_N}{s_N}\right)\right|
            \leq C_B\left(\Xi_N+N^{-1/2}\right)
            \sum_{r=1}^{5}s_N^{-r}\|\varphi^{(r)}\|_\infty.
        \end{align}

        Moreover, for every continuous random variable $Z$ with density bounded
        by $M$,
        \begin{align}\label{four moment Kolmogorov comparison}
            d_{\mathrm K}\left(\frac{F_{N,\beta_N}^{\boldsymbol\xi}-a_N}{s_N},Z\right)
            \leq d_{\mathrm K}\left(\frac{F_{N,\beta_N}-a_N}{s_N},Z\right)+C_{B,M}\left[\left(\frac{\Xi_N+N^{-1/2}}{s_N^5}\right)^{1/6}+\left(\frac{\Xi_N+N^{-1/2}}{s_N}\right)^{1/2}
            \right].
        \end{align}
    \end{proposition}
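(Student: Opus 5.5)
We prove \eqref{four moment comparison estimate} by a Lindeberg replacement along the ordered edge set $(\mathcal E_N,\prec)$. Let $g_{\mathbf i}$ be independent standard Gaussians, so that $W$ is $W^{\boldsymbol\xi}$ with $\xi$ replaced by $g$. For $0\le k\le M_N$ let $\boldsymbol\zeta^{(k)}$ use $\xi_{\mathbf j}$ for the first $k$ coordinates in the order $\prec$ and $g_{\mathbf j}$ for the rest. Write
\[
\Phi(\boldsymbol\zeta)=\varphi\bigl((F^{\boldsymbol\zeta}_{N,\beta_N}-a_N)/s_N\bigr).
\]
The left side of \eqref{four moment comparison estimate} telescopes into at most $\sum_{\mathbf i}|\mathbb E\Phi(\boldsymbol\zeta^{\mathbf i,\xi})-\mathbb E\Phi(\boldsymbol\zeta^{\mathbf i,g})|$. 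Here both vectors share all coordinates except $\mathbf i$, and we set $\boldsymbol\zeta^{\mathbf i,0}$ to have that coordinate equal to $0$. With $h(t)=\Phi(\boldsymbol\zeta^{\mathbf i,0}+t e_{\mathbf i})$, we Taylor expand $h$ to order three with a fourth-order remainder, and also to order four with a fifth-order remainder. The terms of order $\le3$ cancel by \eqref{first three moment matching} and independence.

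The key input is derivative bounds for $t\mapsto F$ in a single coordinate. By the Hamiltonian formula, $\partial_t F=\frac{\beta a_{\mathbf i}}{2\sqrt N}\langle\sigma_r\sigma_s\rangle$. Higher derivatives are cumulants of the bounded variable $\sigma_r\sigma_s$ under the Gibbs measure, so $|\partial_t^m F|\le C_m (B/\sqrt N)^m$ uniformly in $t$ and the other coordinates. No high-temperature structure is needed. By Faà di Bruno,
\[
|h^{(m)}(t)|\le C_{B}N^{-m/2}\sum_{r=1}^m s_N^{-r}\|\varphi^{(r)}\|_\infty
\]
for $m\le5$, using $s_N\ge1$ only to keep mixed products controlled.

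For the remainder at edge $\mathbf i$, we use two expansions. The exact fourth-order term contributes $|\mathbb E\xi_{\mathbf i}^4-3|\cdot\|\mathbb E_{\mathrm{others}}h^{(4)}(0)\|/24$. The fifth-order remainder is bounded by $\mathbb E|\xi|^5N^{-5/2}$ when $|\xi|\le\sqrt N$. The fourth-order remainder, namely the difference between $h^{(4)}$ at an intermediate point and at $0$, bounded by $2\sup|h^{(4)}|$, is used when $|\xi|>\sqrt N$. Combining the two gives a bound $C N^{-2}\mathbb E[|\xi|^4\min\{1,|\xi|/\sqrt N\}]=CN^{-2}\tau_N(\xi_{\mathbf i})$ times the $\varphi$-sum. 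The Gaussian side contributes $\tau_N(g)\le CN^{-1/2}$. Summing over $M_N\le N^2$ edges gives
\[
\Xi_N+\frac{M_N}{N^2}\,C N^{-1/2}\le C(\Xi_N+N^{-1/2}),
\]
which proves \eqref{four moment comparison estimate}.

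For \eqref{four moment Kolmogorov comparison}, write $U^\xi,U$ for the two normalized free energies. We smooth the indicator: take $\varphi_{x,\varepsilon}\in C^5_b$ equal to $1$ on $(-\infty,x]$ and $0$ on $[x+\varepsilon,\infty)$, with $\|\varphi^{(r)}\|_\infty\le C\varepsilon^{-r}$. Then
\[
\mathbb P(U^\xi\le x)\le\mathbb E\varphi_{x,\varepsilon}(U^\xi)\le\mathbb E\varphi_{x,\varepsilon}(U)+C\delta_N\sum_r(s_N\varepsilon)^{-r},
\]
where $\delta_N=\Xi_N+N^{-1/2}$. Next, $\mathbb E\varphi_{x,\varepsilon}(U)\le\mathbb P(U\le x+\varepsilon)\le\mathbb P(Z\le x+\varepsilon)+d_K(U,Z)\le\mathbb P(Z\le x)+M\varepsilon+d_K(U,Z)$. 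The lower bound is symmetric.

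We then optimise over $\varepsilon$, treating the regimes $s_N\varepsilon\le1$ and $s_N\varepsilon\ge1$ separately. When $s_N\varepsilon\le1$ the $r=5$ term dominates, and $\varepsilon=(\delta_N/s_N^5)^{1/6}$ gives the first error term. When $s_N\varepsilon\ge1$ the $r=1$ term dominates, and $\varepsilon=(\delta_N/s_N)^{1/2}$ gives the second. Taking the better choice, or simply bounding the sum by both, yields \eqref{four moment Kolmogorov comparison}.

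The main obstacle is the remainder step: obtaining the mixed truncation $\tau_N$ while keeping only $\sup$ bounds on $h^{(4)},h^{(5)}$ uniformly over the replaced coordinates. This works because the cumulant bounds are deterministic, which avoids any need for moment control of the other entries.
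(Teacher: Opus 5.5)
Your proposal is correct and follows essentially the paper's argument: the same Lindeberg replacement in the order $\prec$, and single-coordinate derivative bounds of order $N^{-m/2}$, where your Gibbs-cumulant formulation is equivalent to the paper's $\lambda_{\mathbf i,N}^m(\log\cosh)^{(m)}$ computation. The rest also matches: the fourth-order Taylor remainder bounded by $|u|^4\min\{1,|u|/\sqrt N\}$ to produce $\tau_N$, and the standard smoothing, where the paper takes $\varepsilon$ as the maximum of your two choices, i.e.\ the right regime is decided by whether $(\Xi_N+N^{-1/2})s_N\le1$, and your remark that $s_N\ge1$ is used in the Fa\`a di Bruno bound is unnecessary.
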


    The proof of Proposition~\ref{four moment comparison} is given in Subsection~\ref{subsec:positive-proposition-proofs}; the Kolmogorov estimate \eqref{four moment Kolmogorov comparison} is the standard smoothing consequence of the smooth test-function estimate \eqref{four moment comparison estimate}.

    Condition \eqref{critical moment conditions} gives
    \begin{align*}
        \Xi_N\leq\frac1{N^2}\sum_{\mathbf{i}\in\mathcal E_N}\left(2\mathbb E|\xi_{\mathbf{i}}|^4+3\right)\leq C.
    \end{align*}

    Hence \eqref{convergence of gaussian case} and \eqref{four moment Kolmogorov comparison} give
    \begin{align*}
        \frac{F_{N,\beta_N}^{\boldsymbol\xi}-a_N}{s_N}\xrightarrow{d}Z_b.
    \end{align*}
\end{proof}

\begin{proof}[Proof of Theorem~\ref{four moment mixed limit}\textup{(ii)}]
    Let
    \begin{align*}
        s_N=\frac12N^{1/3}(\beta_N-1),
        \qquad
        a_N=N\,\mathrm{FE}(\beta_N)-\frac{\log N}{12}.
    \end{align*}

    Theorem~\ref{GOE mixed limit}\textup{(ii)} gives
    \begin{align}\label{TW convergence for GOE}
        \frac{F_{N,\beta_N}-a_N}{s_N}\xrightarrow{d}TW_1.
    \end{align}

    Condition \eqref{critical moment conditions} gives $\Xi_N\leq C$, while $s_N\to\infty$.  
    Since $TW_1$ has a bounded density, \eqref{four moment Kolmogorov comparison} and the preceding convergence give
    \begin{align*}
        d_{\mathrm K}\left(\frac{F_{N,\beta_N}^{\boldsymbol\xi}-a_N}{s_N},TW_1\right)
        \leq d_{\mathrm K}\left(\frac{F_{N,\beta_N}-a_N}{s_N},TW_1\right)+C\left[\left(\frac{\Xi_N+N^{-1/2}}{s_N^5}\right)^{1/6}+\left(\frac{\Xi_N+N^{-1/2}}{s_N}\right)^{1/2}\right].
    \end{align*}

    Combining with \eqref{TW convergence for GOE}, this proves the claimed convergence.
\end{proof}

\begin{proof}[Proof of Theorem~\ref{fixed high temperature}]
    For the fixed high-temperature phase $0<\beta<1$,
    the results of Aizenman, Lebowitz, and Ruelle~\cite{AizenmanLebowitzRuelle} and Comets and Neveu~\cite{CometsNeveu} give
    \begin{align*}
        F_{N,\beta}-\frac{\beta}{2}\sum_{i=1}^{N}W_{ii}-\frac{N\beta^2}{4}
        \xrightarrow{d}
        \mathcal N\left(m_\beta,-\frac12\left(\log(1-\beta^2)+\beta^2\right)\right).
    \end{align*}
    
    As $\frac{\beta}{2}\sum_{i=1}^{N}W_{ii}\sim\mathcal N\left(0,\frac{\beta^2}{2}\right)$,
    we have
    \begin{align}\label{Gaussian convergence for fixed high temperature}
        F_{N,\beta}-\frac{N\beta^2}{4}\xrightarrow{d}\mathcal{N}(m_\beta,\widehat v_\beta).
    \end{align}

    See also the generalized result of Choo, Han, and Lee~\cite{ChooHanLee}.

    The Gaussian limit has a bounded density. 
    Proposition~\ref{four moment comparison} with $s_N=1$ and $a_N=N\beta^2/4$ gives
    \begin{align}\label{four moment for fixed high temperature}
        d_{\mathrm K}\left(F_{N,\beta}^{\boldsymbol\xi}-\frac{N\beta^2}{4},\mathcal N(m_\beta,\widehat v_\beta)\right)
        &\leq d_{\mathrm K}\left(F_{N,\beta}-\frac{N\beta^2}{4},\mathcal N(m_\beta,\widehat v_\beta)\right)\notag\\
        &\quad+C_\beta\left((\Xi_N+N^{-1/2})^{1/6}+(\Xi_N+N^{-1/2})^{1/2}\right).
    \end{align}

    Thus \eqref{comparison condition}, \eqref{high temperature CLT}, \eqref{Gaussian convergence for fixed high temperature}, and
    \eqref{four moment for fixed high temperature} give
    \begin{align*}
        F_{N,\beta}^{\boldsymbol\xi}-\frac{N\beta^2}{4}\xrightarrow{d}\mathcal{N}(m_\beta,\widehat v_\beta).
    \end{align*}
\end{proof}

\begin{proof}[Proof of Theorem~\ref{fixed critical temperature}]
    Put
    \begin{align*}
        s_N=\sqrt{\frac{\log N}{6}},\qquad a_N=\frac{N}{4}-\frac{\log N}{12}.
    \end{align*}

    Du and Huang~\cite{DuHuang} proved that
    \begin{align}\label{critical Gaussian comparison} 
        \frac{F_{N,1}-a_N}{s_N} \xrightarrow{d}\mathcal{N}(0,1).
    \end{align}

    Condition \eqref{critical moment conditions} gives $\Xi_N\leq C$.
    Combining \eqref{four moment Kolmogorov comparison} and \eqref{critical Gaussian comparison} gives
    \begin{align*}
        \frac{F_{N,1}^{\boldsymbol\xi}-a_N}{s_N}\xrightarrow{d}\mathcal N(0,1).
    \end{align*}
\end{proof}

\subsection{\texorpdfstring{Proofs of Proposition~\ref{L2 comparison} for $b>0$ and Proposition~\ref{four moment comparison}}{Proofs of comparison propositions}}\label{subsec:positive-proposition-proofs}

For $x,y\in S_N$, let $q=R(x,y)=x^Ty/N$ be the overlap. 
Throughout the rest of the paper, $\rho_N(q)$ denotes the overlap density of two independent uniform spherical points:
\begin{align*}
    \rho_N(q)=\frac{\Gamma(N/2)}{\sqrt{\pi}\,\Gamma((N-1)/2)}(1-q^2)^{(N-3)/2}{\bf 1}_{\{|q|<1\}}.
\end{align*}

\begin{proof}[Proof of Proposition~\ref{L2 comparison} for $b>0$]
    
    If $\widetilde x,\widetilde y\in S_N$ satisfy $\widetilde x^T\widetilde y=x^Ty$,
    then there is an orthogonal matrix $O$ such that
    \begin{align*}
        \widetilde x=Ox,\qquad \widetilde y=Oy.
    \end{align*}

    For a symmetric matrix $M$, write
    \begin{align*}
        H_M(z)=\frac12z^TMz,\qquad Z_{N,\beta}^{sph}(M)=\int_{S_N}e^{\beta H_M(z)}d\nu_N(z).
    \end{align*}

    Since $Z_{N,\beta}^{sph}(O^TWO)=Z_{N,\beta}^{sph}(W)$ and $O^TWO\overset{d}=W$, we have
    \begin{align*}
        \mathbb{E}\left[\frac{e^{\beta H_W(\widetilde x)+\beta H_W(\widetilde y)}}{Z_{N,\beta}^{sph}(W)^2}\right]
        =\mathbb{E}\left[\frac{e^{\beta H_{O^TWO}(x)+\beta H_{O^TWO}(y)}}{(Z_{N,\beta}^{sph}(O^TWO))^2}\right] 
        =\mathbb{E}\left[\frac{e^{\beta H_W(x)+\beta H_W(y)}}{Z_{N,\beta}^{sph}(W)^2}\right].
    \end{align*}

    Thus this expectation depends on $x,y$ only through $q$. 
    We may therefore define
    \begin{align*}
        J_{\beta}(q)=\mathbb{E}\left[\frac{e^{\beta H_W(x)+\beta H_W(y)}}{Z_{N,\beta}^{sph}(W)^2}\right].
    \end{align*}

    We now prove the proposition using several auxiliary estimates. 
    {
    Throughout this proof, let
    \begin{align*}
        \beta=\beta_N=1+b_NN^{-1/3}\sqrt{\log N},
    \end{align*}
    where $b_N\geq b_0>0$ and $b_N=o(N^{1/12}/\sqrt{\log N})$.
    }
    Fix constants $L>1$ and $M>1$, which will be chosen later, and put
    \begin{align*}
        u_{\beta}=L(1-\beta^{-1}),\quad v_{\beta}=M\sqrt{\beta-1}.
    \end{align*}

    We divide $\left|q\right|\leq 1$ into three parts. For $u_{\beta}\leq \left|q\right|\leq v_{\beta}$, write
    \begin{align*}
        \delta=\beta\left|q\right|-\beta+1.
    \end{align*}

    The main estimate we need is the following exponential upper bound.

    \begin{lemma}\label{tail bound implies L2}
        Let $X_{N,\beta}=Z_{N,\beta}/Z_{N,\beta}^{sph}$.
        Assume that there exist constants $c,C>0$ such that, uniformly for $u_{\beta}\leq \left|q\right|\leq v_{\beta}$,
        \begin{align}\label{main tail bound} 
            \rho_N(q)J_{\beta}(q)\leq N^C\exp\{-cN\delta^3\}.
        \end{align}

        Then
        {
        \begin{align*}
            \mathbb{E}[(X_{N,\beta}-1)^2]
            &\leq C\left(N^{-1}+(\beta-1)+(\beta-1)^2+N(\beta-1)^4\right)\\
            &\quad+N^C\exp\{-cN(\beta-1)^3\}
            +N^C\exp\{-cN(\beta-1)^2\}.
        \end{align*}
        }
    \end{lemma}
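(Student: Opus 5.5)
We start from the second-moment identity. Since $X_{N,\beta}=2^{-N}\sum_{x\in\Sigma_N}e^{\beta H_W(x)}/Z^{sph}_{N,\beta}$ and every $x\in\Sigma_N$ lies on $S_N$, we have
\begin{align*}
    \mathbb E[X_{N,\beta}^2]=4^{-N}\sum_{x,y\in\Sigma_N}J_\beta(R(x,y))=\mathbb E_{x,y}\bigl[J_\beta(R(x,y))\bigr],
\end{align*}
with $x,y$ independent uniform on $\Sigma_N$. Moreover, $\mathbb E[X_{N,\beta}]=\mathbb E_x[J^{(1)}]$, where $J^{(1)}=\mathbb E[e^{\beta H_W(x)}/Z^{sph}]$ does not depend on $x$ by rotation invariance. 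Averaging over the sphere instead of the cube gives
\begin{align*}
    J^{(1)}=\mathbb E\bigl[Z^{sph}/Z^{sph}\bigr]=1,
\end{align*}
so $\mathbb E X_{N,\beta}=1$. Similarly, $\int J_\beta(q)\rho_N(q)\,dq=1$. Hence
\begin{align*}
    \mathbb E[(X-1)^2]=\mathbb E_{x,y}J_\beta(R)-\int J_\beta\rho_N\,dq.
\end{align*}
This is the difference between the cube overlap law (discrete, approximately $\mathcal N(0,1/N)$ on the lattice $\{-1,\dots,1\}$ with spacing $2/N$) and the sphere overlap law $\rho_N$, both integrated against $J_\beta$. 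The plan is to split $|q|\le1$ into the three regions $|q|<u_\beta$, $u_\beta\le|q|\le v_\beta$ and $|q|>v_\beta$, and to bound the contribution of each region to this difference.

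\textbf{Middle region.} Here the hypothesis \eqref{main tail bound} applies. The cube overlap probability $\mathbb P(R=q)$ satisfies
\begin{align*}
    \mathbb P(R=q)\le CN^{-1/2}e^{-NI(q)},
\end{align*}
with $I$ the binary entropy rate function, and $I(q)\ge \tfrac12q^2+\tfrac1{12}q^4$ while $\rho_N(q)\approx \sqrt{N}e^{-Nq^2/2-Nq^4/4}$. Hence $\mathbb P(R=q)\le N^C\rho_N(q)$ for $|q|\le v_\beta\to0$, and each of the at most $N$ lattice points contributes at most $N^C e^{-cN\delta^3}$. Since $\delta\ge \beta u_\beta-\beta+1=(L-1)(\beta-1)$, this region contributes $N^C\exp\{-cN(\beta-1)^3\}$ on both sides, which is one of the target terms.

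\textbf{Outer region.} For $|q|>v_\beta$ we bound $J_\beta$ crudely. Using $H_W(x)\le \tfrac N2\lambda_1$ and $Z^{sph}\ge e^{N\mathrm{FE}(\beta)-C\log N}$ with overwhelming probability (by edge concentration of $\lambda_1$ and the known SSK free energy), we get
\begin{align*}
    J_\beta(q)\le e^{N(\beta\lambda_1-2\mathrm{FE}(\beta))+O(\log N)}.
\end{align*}
The exact Gaussian computation of $\mathbb E e^{\beta H(x)+\beta H(y)}$ gives a factor $e^{N\beta^2(1+q^2)/2}$. Comparing this factor with the entropy cost $e^{-Nq^2/2-Nq^4/12}$ on the cube and $(1-q^2)^{N/2}$ on the sphere, the surplus at $q^2\ge M^2(\beta-1)$ is at most $e^{-cNq^4}\le e^{-cN M^4(\beta-1)^2}$ once $M$ is large. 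The bad event, on which $Z^{sph}$ is atypically small, is handled by Cauchy--Schwarz together with a lower-tail bound for $\lambda_1$. This yields the term $N^C\exp\{-cN(\beta-1)^2\}$.

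\textbf{Inner region and main obstacle.} For $|q|<u_\beta=O(\beta-1)$ the two overlap laws must genuinely be compared. The approach is to show that $J_\beta$ is smooth at scale $1/\sqrt N$ here, namely $J_\beta(q)=J_\beta(0)\,(1+O(Nq^2(\beta-1)+Nq^4))$, using the second-order expansion of $\log J_\beta$ in $q$. Then the discrete cube law and the continuous sphere law agree up to $O(N^{-1})$ (Euler--Maclaurin with even moments matching up to order $N^{-1}$), and the fourth moments differ by $O(N^{-2})$. Integrating the error terms against either law gives
\begin{align*}
    C\bigl(N^{-1}+(\beta-1)+(\beta-1)^2+N(\beta-1)^4\bigr),
\end{align*}
where the last term comes from $Nq^4$ with $q\sim u_\beta$. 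The hard step is establishing this regularity of $J_\beta$ near $q=0$ uniformly in the supercritical window, because $J_\beta$ is an expectation of a ratio. I would first try the tilted Gaussian representation $J_\beta(q)=\mathbb E_{\widetilde W}[1/Z^{sph}(\widetilde W)^2]\,e^{N\beta^2(1+q^2)/2}$, where $\widetilde W$ is GOE shifted by a rank-two perturbation along $x,y$, and then control $Z^{sph}(\widetilde W)$ through the contour-integral formula with BBP-type eigenvalue estimates.
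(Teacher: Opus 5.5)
The gap is in the inner region, at the step you already flagged. The expansion $J_\beta(q)=J_\beta(0)(1+O(Nq^2(\beta-1)+Nq^4))$ on $|q|<u_\beta$ is not merely hard to prove; it is false in this window.

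Write $\langle\cdot\rangle_{sph}$ for the spherical Gibbs average, so that $\int g\,\rho_NJ_\beta\,dq=\mathbb E\langle g(R_{12})\rangle_{sph}$. Your expansion would then make the averaged overlap law live on the scale $N^{-1/2}$, i.e.\ $\mathbb E\langle R_{12}^2\rangle_{sph}=O(N^{-1})$. But for $b_N\ge b_0>0$ the spherical Gibbs measure condenses on the top eigenvector. The excess $\beta-1\gg N^{-1/3}$ in the saddle-point equation $\beta=\frac1N\sum_i(\gamma_\beta-\lambda_i)^{-1}$ must be carried by the top term, so a fraction $\approx 1-\beta^{-1}$ of the Gibbs mass sits on that eigenvector. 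Hence $\mathbb E\langle R_{12}^2\rangle_{sph}$ is of order $(\beta-1)^2\gg N^{-1}$, and $J_\beta(q)/J_\beta(0)$ is huge inside $|q|<u_\beta$. No scheme that integrates errors against the untilted overlap laws can work.

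What the paper uses instead is a local, deterministic estimate:
\begin{align*}
    \left|\frac{d}{dq}\log J_\beta(q)\right|\le N\beta^2|q|,\qquad J_\beta(q)\le e^{N\beta^2q^2/2}.
\end{align*}
It is proved with no random-matrix input.
\begin{itemize}
\item Cameron--Martin gives $J_\beta(q)=e^{N\beta^2(1+q^2)/2}K_\beta(q)$ with $K_\beta(q)=\mathbb E[Z^{sph}_{N,\beta}(W+A_{\beta,x,y})^{-2}]$.
\item H\"older makes $M\mapsto Z^{sph}_{N,\beta}(M)^{-2}$ log-concave, and the shifted Gaussian density is jointly log-concave in $(M,q)$. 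Pr\'ekopa--Leindler then makes $\log K_\beta$ concave; it is also even.
\item The variance formula gives $(\log K_\beta)''\ge -N\beta^2$. Hence $J_\beta$ is nondecreasing in $|q|$, and $J_\beta(0)\le\int\rho_NJ_\beta\,dq=1$.
\end{itemize}
With this, $J_\beta(q)=J_\beta(t)(1+O(|q|+N^{-1}))$ for $|t-q|\le N^{-1}$. Combined with the Stirling comparison $p_N(q)e^{Nq^2/2}=\frac2N\rho_N(t)e^{Nt^2/2}(1+O(N^{-1}+q^2+Nq^4))$, each lattice term equals $(1+O(N^{-1}+|q|+q^2+Nq^4))\int_{q-N^{-1}}^{q+N^{-1}}\rho_NJ_\beta\,dt$. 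Summing over $|q|\le q_*\le C(\beta-1)$ against total mass at most $1$ gives the polynomial terms, without ever knowing where the mass of $\rho_NJ_\beta$ lies.

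The outer region also does not close as written. Your first bound $e^{N(\beta\lambda_1-2\mathrm{FE}(\beta))}$ is of order $e^{(3/2+\log\beta)N}$, useless against the entropy $e^{-Nq^2/2}$ near $v_\beta$. The lower bound $Z^{sph}_{N,\beta}\ge e^{N\mathrm{FE}(\beta)-C\log N}$ fails on an event whose probability is not exponentially small, since the spherical free energy fluctuates on scale at least $\sqrt{\log N}$. Meanwhile Cauchy--Schwarz on that event costs a factor exponentially large in $N$: the Gaussian exponent doubles, and $Z^{sph}_{N,\beta}$ does not compensate the fluctuations of $H_N(x)+H_N(y)$ for fixed $x,y$.

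The deterministic bound $J_\beta(q)\le e^{N\beta^2q^2/2}$ makes this region two lines. Both $\rho_NJ_\beta$ and $p_NJ_\beta$ are at most $N^C\exp\{\frac N2(\beta^2-1)q^2-cNq^4\}$, and $\frac N2(\beta^2-1)q^2\le\frac{3N}{2M^2}q^4$ for $|q|\ge v_\beta$.

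Finally, there is a fixable slip in the middle region. By your own estimates, $\mathbb P(R=q)/\rho_N(q)$ is of order $N^{-1}e^{Nq^4/6}$, not $N^C$, because $Nv_\beta^4=M^4N(\beta-1)^2\to\infty$. It is absorbed as in the paper: $\delta\ge\beta(1-L^{-1})|q|$ gives $Nq^4\le C_Lv_\beta N\delta^3=o(N\delta^3)$.
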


    Therefore it remains to prove \eqref{main tail bound}, for which we need an upper bound of $J_{\beta}(q)$ given by the following result.

    \begin{lemma}\label{direct reduction to Delta F}
        For $-1< q< 1$, define
        \begin{align*}
            \Delta F_N^{sph}(q)=F_{N,\beta(1+q)}^{sph}+F_{N,\beta(1-q)}^{sph}-2F_{N,\beta}^{sph}.
        \end{align*}

        There is a numerical constant $C_0>0$ such that, for all $u_{\beta}\leq \left|q\right|\leq v_{\beta}$,
        \begin{align}\label{J direct Delta F bound} 
            J_{\beta}(q)\leq C\sqrt{N}\,\mathbb{E}\exp\{\Delta F_N^{sph}(q)+C_0\Vert W\Vert_{op}\}.
        \end{align}
    \end{lemma}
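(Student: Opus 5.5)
The plan is to exploit the rotational invariance already established, namely that $J_\beta(q)$ depends on $(x,y)$ only through $q$, and then to decouple the two replicas by a polarization identity. The resulting orthogonality constraint will be removed at the cost of a factor $\sqrt N\,e^{C_0\|W\|_{op}}$.

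\emph{Step 1 (averaging over the slice).} Since $J_\beta(q)$ does not depend on the particular pair with overlap $q$, I will replace the fixed pair $(x,y)$ by a pair $(x,y)$ that is uniformly (Haar) distributed on $\{(x,y)\in S_N^2: x^Ty=Nq\}$, independent of $W$. By Fubini,
\[
J_\beta(q)=\mathbb E_W\Big[Z^{sph}_{N,\beta}(W)^{-2}\,\mathbb E_{x,y}\,e^{\beta H_W(x)+\beta H_W(y)}\Big].
\]

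\emph{Step 2 (polarization).} Put $u=(x+y)/\sqrt{2(1+q)}$ and $v=(x-y)/\sqrt{2(1-q)}$. Then $u,v\in S_N$ and $u^Tv=0$. Moreover $(u,v)$ is Haar-distributed on orthogonal pairs: it is an equivariant image of a Haar-distributed pair, so $u\sim\nu_N$ and, given $u$, $v$ is uniform on $S_N\cap u^\perp$. The parallelogram identity
\[
x^TWx+y^TWy=\tfrac12\big[(x+y)^TW(x+y)+(x-y)^TW(x-y)\big]
\]
then gives
\[
\beta H_W(x)+\beta H_W(y)=\beta(1+q)H_W(u)+\beta(1-q)H_W(v).
\]
Hence the inner average equals
\[
\int_{S_N}e^{\beta(1+q)H_W(u)}\Big(\int_{S_N\cap u^\perp}e^{\beta(1-q)H_W(v)}\,d\sigma_{u^\perp}(v)\Big)\,d\nu_N(u).
\]

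\emph{Step 3 (removing orthogonality; the main point).} Fix $u$ and $\gamma=\beta(1-q)\le 2\beta$, and set $I(r)$ to be the average of $e^{\gamma H_W(w)}$ over the slice $\{w\in S_N: w^Tu=Nr\}$. Every $w$ in that slice can be written as
\[
w=r u+\sqrt{1-r^2}\,v,\qquad v\in S_N\cap u^\perp,
\]
with $v$ uniform on $S_N\cap u^\perp$ exactly when $w$ is uniform on the slice. We have $\|w-v\|_2\le 2|r|\sqrt N$ and $\|w\|_2,\|v\|_2=\sqrt N$, so
\[
|H_W(w)-H_W(v)|\le\|W\|_{op}\,\sqrt N\cdot 2|r|\sqrt N\le 2\|W\|_{op}\quad\text{for }|r|\le 1/N.
\]
Consequently $I(0)\le e^{2\gamma\|W\|_{op}}I(r)$ for $|r|\le1/N$. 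Averaging this over $r$ against $\rho_N$ yields
\[
I(0)\le \frac{e^{C_0\|W\|_{op}}}{\int_{|r|\le1/N}\rho_N(r)\,dr}\int\rho_N(r)I(r)\,dr=\frac{e^{C_0\|W\|_{op}}}{\int_{|r|\le1/N}\rho_N}\,Z^{sph}_{N,\gamma}(W).
\]
Since $\rho_N(r)\asymp\sqrt N$ uniformly on $|r|\le1/N$ (because $\Gamma(N/2)/\Gamma((N-1)/2)\sim\sqrt{N/2}$ and $(1-r^2)^{(N-3)/2}\ge c$ there), the denominator is at least $c/\sqrt N$. This gives the factor $C\sqrt N$, with $C_0=4\beta\le 8$ a numerical constant because $\beta\le2$ for large $N$.

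\emph{Step 4 (conclusion).} With the inner integral bounded uniformly in $u$, the outer integral over $u$ becomes $Z^{sph}_{N,\beta(1+q)}(W)$. Therefore
\[
J_\beta(q)\le C\sqrt N\,\mathbb E\Big[e^{C_0\|W\|_{op}}\,\frac{Z^{sph}_{N,\beta(1+q)}Z^{sph}_{N,\beta(1-q)}}{(Z^{sph}_{N,\beta})^2}\Big]=C\sqrt N\,\mathbb E\exp\{\Delta F^{sph}_N(q)+C_0\|W\|_{op}\},
\]
which is \eqref{J direct Delta F bound}. The only delicate point is Step 3: the slice width must be chosen so that the perturbation of the Hamiltonian stays $O(\|W\|_{op})$, with no factor of $N$, while the $\rho_N$-mass of the slice is still of order only $N^{-1/2}$. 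The width $1/N$ achieves both, and the argument uses nothing about the restriction $u_\beta\le|q|\le v_\beta$ beyond $|q|<1$.
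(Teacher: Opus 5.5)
Your proof is correct and follows essentially the same route as the paper. You polarize with $u=(x+y)/\sqrt{2(1+q)}$ and $v=(x-y)/\sqrt{2(1-q)}$, then compare the orthogonal slice $S_N\cap u^\perp$ with the band $|t|\le N^{-1}$ of the full sphere: its $\rho_N$-mass of at least $cN^{-1/2}$ produces the factor $C\sqrt N$, and its width keeps the Hamiltonian perturbation at $O(\|W\|_{op})$. The only difference is cosmetic and equivalent to the paper's: you randomize by averaging over a Haar-distributed pair on the overlap slice, using the already established fact that $J_\beta$ depends only on $q$, whereas the paper conditions on the spectrum and uses the Haar eigenbasis.
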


    Thus the proof of \eqref{main tail bound} turns into estimation of
    \begin{align}
        \label{target exp} \rho_N(q)\mathbb{E}\exp\{\Delta F_N^{sph}(q)+C_0\Vert W\Vert_{op}\}.
    \end{align}

    It remains to prove the following explicit upper bound for \eqref{target exp}.

    \begin{lemma}\label{Delta F exponential upper}
        For every fixed $C_0>0$, there exist constants $c,C>0$, independent of $N$ and $b_N$, such that, uniformly for $u_{\beta}\leq \left|q\right|\leq v_{\beta}$,
        \begin{align}\label{target Delta F op bound} 
            \rho_N(q)\mathbb{E}\exp\{\Delta F_N^{sph}(q)+C_0\Vert W\Vert_{op}\}\leq N^C\exp\{-cN\delta^3\}.
        \end{align}
    \end{lemma}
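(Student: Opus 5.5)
The plan is to compute the exponent explicitly from the spherical variational formula: first on a deterministic "typical spectrum" level, then on a random level where the only non-negligible randomness is the top eigenvalue $\lambda_1$ and a few linear statistics. I would write $\beta_\pm=\beta(1\pm q)$ and, by symmetry of $\rho_N$ and of $\Delta F_N^{sph}$, take $q>0$.

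Since $u_\beta\le q\le v_\beta$ and $\delta=\beta q-\beta+1>0$, we have $\beta_+>1$ and $\beta_-=1-\delta<1$. I would also use $\rho_N(q)\le C\sqrt N(1-q^2)^{N/2}$.

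\textbf{Step 1 (deterministic skeleton).} Using the two branches of $\mathrm{FE}$, one checks
\begin{align*}
\mathrm{FE}_{>}(\beta_+)+\mathrm{FE}_{>}(\beta_-)-2\,\mathrm{FE}(\beta)+\tfrac12\log(1-q^2)=0,
\end{align*}
where $\mathrm{FE}_{>}(t)=t-\frac12\log t-\frac34$ is the supercritical formula extended to all $t>0$. Indeed the linear terms cancel, and $\log\beta_++\log\beta_-=2\log\beta+\log(1-q^2)$. Hence
\begin{align*}
N\bigl[\mathrm{FE}(\beta_+)+\mathrm{FE}(\beta_-)-2\mathrm{FE}(\beta)\bigr]+\tfrac N2\log(1-q^2)=N\bigl[\mathrm{FE}(\beta_-)-\mathrm{FE}_{>}(\beta_-)\bigr].
\end{align*}
A Taylor expansion at $t=1$ gives $\beta_-^2/4-\mathrm{FE}_{>}(\beta_-)=-\tfrac16\delta^3+O(\delta^4)$. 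This is the source of the factor $e^{-cN\delta^3}$.

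\textbf{Step 2 (random upper/lower bounds).} I would use the representation
\begin{align*}
Z^{sph}_{N,t}\asymp \frac{\Gamma(N/2)}{(Nt/2)^{N/2-1}}\oint e^{\frac N2\left(t\gamma-\frac1N\sum_i\log(\gamma-\lambda_i)\right)}d\gamma .
\end{align*}
From it I would derive an \emph{upper} bound for $F^{sph}_{N,\beta_\pm}$ by evaluating the exponent at any chosen $\gamma>\lambda_1$, at polynomial cost $N^{C}$. I would derive a \emph{lower} bound for $F^{sph}_{N,\beta}$ by the standard steepest-descent lower bound at a $\gamma$ close to $\lambda_1$.

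The choices would be $\gamma_+$, the saddle for $\beta_+$, which sits near $\lambda_1+O(N^{-1}(\beta_+-1)^{-1})$; $\gamma_-$, the semicircle saddle $\beta_-+\beta_-^{-1}$ or $\lambda_1+N^{-1}$, whichever is larger; and the supercritical choice for $\beta$. Subtracting, and comparing with Step 1, the random exponent becomes
\begin{align*}
\tfrac N2\,\delta\,(\lambda_1-2)_+ \;+\;\mathcal L_N \;+\; O(\log N)\;-\;\tfrac{N}{6}\delta^3(1+o(1)).
\end{align*}
Here $\mathcal L_N$ is a combination of centered linear statistics $\sum_i\log(\gamma-\lambda_i)-N\int\log(\gamma-x)\,d\sigma_{sc}$, evaluated at $\gamma$'s separated from the spectrum by at least $N^{-2/3}$-ish. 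I expect the bound to need to be uniform in $b_N$. The ranges $|q|\le v_\beta$ and $b_N=o(N^{1/12}/\sqrt{\log N})$ are what keep the $O(\delta^4)$ and $(\beta-1)$ corrections below $\delta^3$.

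\textbf{Step 3 (exponential moments).} The term $C_0\|W\|_{op}$ and $\mathcal L_N$ have sub-Gaussian exponential moments of order $N^{C}$, by Gaussian concentration (Lipschitz in $W$, with a truncation at $\lambda_1\le 2+N^{-2/3+\epsilon}$ handled separately). I would separate them from the main term by Hölder. The main obstacle is the edge term. The GOE upper-tail estimate $\mathbb P(\lambda_1-2>s)\le e^{-N I(s)}$ with $I(s)=\tfrac23 s^{3/2}(1+o(1))$ gives
\begin{align*}
\mathbb E\,e^{\frac N2\delta(\lambda_1-2)_+}\le N^C\exp\Bigl\{N\sup_{s\ge0}\bigl(\tfrac12\delta s-\tfrac23 s^{3/2}\bigr)\Bigr\}=N^C e^{N\delta^3/24}.
\end{align*}
This is smaller than $e^{N\delta^3/6}$, so the net exponent is $\le -\tfrac18 N\delta^3$. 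But the argument hinges on sharp constants: the exact GOE large-deviation rate, and the exact coefficient $\delta/2$ in front of $\lambda_1-2$, which the Hölder exponents must not inflate beyond a factor $<2$. The one-sided nature of the bounds also matters, since only the upper bounds on $F_{\beta_\pm}$ carry the $\lambda_1$ dependence while the lower bound on $F_\beta$ only removes it.

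If these constants turn out too tight, my fallback is to keep the full function $\Phi(\lambda_1)$ from the saddle computation instead of linearizing. I would then show directly, by convexity in $\lambda_1$, that $\Phi(2+s)-I(s)\le-c\delta^3$ for all $s\ge0$.
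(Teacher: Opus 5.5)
Your Step~1 barrier is correct: it is the paper's $\frac N2\Delta\phi_2(q)=\frac N2[\log(1-\delta)+\delta+\frac{\delta^2}{2}]\approx-\frac{N\delta^3}{6}$ after the $\rho_N$ cancellation. Steps~2--3, however, have a genuine gap at the spectral edge.

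The lower bound on $F^{sph}_{N,\beta}$ forces you to bound $\Phi_{\lambda_1}(\beta)=G_\beta(\gamma_\beta)$ from below, i.e.\ $L_N$ from above at the random saddle $\gamma_\beta$. By your own estimate, the saddles for $\beta(1+q)$ and for $\beta$ lie within $O(N^{-1}(\beta-1)^{-1})\ll N^{-2/3}$ of $\lambda_1$, not ``$N^{-2/3}$-ish'' away. There, $W\mapsto\sum_i\log(\gamma-\lambda_i)$ has Frobenius gradient at least $(\gamma-\lambda_1)^{-1}\gg N^{2/3}$. So Gaussian concentration gives nothing better than fluctuations of order $N^{1/6}$, whereas $N\delta^3$ may be only of order $(\log N)^{3/2}$ (fixed $b$, $\delta\asymp\beta-1$). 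The counting discrepancy does not rescue this either. It bounds the comparison of $L_N-l_{sc}$ at $\gamma_\beta$ and at $\gamma_-$ only by $\mathcal D_N\log(CN\delta^2)$. Since $\log(CN\delta^2)\asymp\log N$ and $\mathcal D_N$ is typically at least of order $\sqrt{\log N}$, this bound is not $o(N\delta^3)$ in that regime.

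The missing idea is to treat the three infima jointly rather than one saddle at a time. Since $\beta(1+q)+\beta(1-q)=2\beta$, convexity of $\theta\mapsto\Phi_a(\theta)-\Phi_{\lambda_1}(\theta)$ gives $\Delta\Phi_{\lambda_1}(q)\le\Delta\Phi_a(q)$ with $a=2+x_N+\frac\epsilon2\delta^2$, so nothing has to be evaluated next to $\lambda_1$. On $\{\mathcal D_N\le\frac14N(\frac\epsilon2\delta^2)^{3/2}\}$ one has $L_N'(a)<1$, so the $\beta$ and $\beta(1+q)$ infima are attained exactly at $a$. The only remaining fluctuation is $N[(L_N-l_{sc})(a)-(L_N-l_{sc})(z_-)]\le\mathcal D_N\log(4/\epsilon)\le\frac\epsilon2N\delta^3$, because both points are at distance at least $\frac\epsilon2\delta^2$ from $2+x_N$ with ratio at most $4/\epsilon$. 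Moving the boundary from $2+x_N$ to $a$ costs another $\frac\epsilon4N\delta^3$. The complementary event is absorbed by $\mathbb Ee^{s\mathcal D_N}\le N^{C_s}$ together with the deterministic bound $\mathcal A_N\le\frac13N\delta^3$.

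Second, linearizing the edge term to $\frac N2\delta(\lambda_1-2)_+$ discards a deterministic self-penalty, and this is exactly why you end up needing sharp constants. When the boundary moves to $2+x$, the semicircle potential contributes $l_{sc}(2+x)=l_{sc}(2)+x-2B(x)$. So the true edge response is $\mathcal R_N(x)=N(\frac{\delta x}{2}-B(x))$ for $x\le\delta^2/(1-\delta)$ and constant beyond, with $B(x)\ge\frac{x^{3/2}}{3}-\frac{x^2}{8}$. If you keep this term, a tail bound $\mathbb P(\lambda_1\ge2+x)\le Ce^{-c_0Nx^{3/2}}$ with an \emph{unspecified} $c_0>0$ suffices. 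Indeed $(\mathbb Ee^{r\mathcal R_N(x_N)})^{1/r}\le N^C\exp\{N\delta^3[\frac{r^2}{6(r+3c_0)^2}+o(1)]\}$, which is strictly below the barrier $\frac{N\delta^3}{6}$ for every H\"older exponent $r$.

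Your linearized route instead requires the exact GOE rate $\frac23x^{3/2}$, uniformly for $x$ down to order $(\beta-1)^2\asymp N^{-2/3}\log N$, together with a H\"older exponent below $2$. You do not supply that moderate-deviation input, and the standard tail bounds (such as the Ledoux--Rider estimate the paper uses) give only some $c_0>0$. Your fallback of keeping the full saddle function points in the right direction. It removes the need for sharp constants precisely because it retains this $-NB(x)$ term, so that term is what you should make explicit.
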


    The three lemmas above are proved in Section~\ref{sec:further-positive-lemmas}; the proof of Lemma~\ref{Delta F exponential upper} uses auxiliary estimates proved in Sections~\ref{sec:remaining-auxiliary-lemmas}--\ref{sec:random-matrix-input-proofs}.

    By Lemma~\ref{direct reduction to Delta F} and Lemma~\ref{Delta F exponential upper}, the estimate \eqref{main tail bound} holds. 
    {
    Hence Lemma~\ref{tail bound implies L2} gives \eqref{moderately supercritical L2 bound}.
    }
\end{proof}

\begin{proof}[Proof of Proposition~\ref{four moment comparison}]
    Let $W\sim\mathrm{GOE}(N)$ be independent of $(\xi_{\mathbf{i}})_{\mathbf{i}\in\mathcal E_N}$.  For $\mathbf{i}=(r,s)\in\mathcal E_N$, put
    \begin{align*}
        \widehat\xi_{\mathbf{i}}=\begin{cases}
            \sqrt N W_{rs}, & r<s,\\
            \sqrt{N/2}W_{rr}, & r=s.
        \end{cases}
    \end{align*}
    Then $(\widehat\xi_{\mathbf{i}})_{\mathbf{i}\in\mathcal E_N}$ are independent standard Gaussian random variables and
    \begin{align*}
        \sigma_{\mathbf{i}}=\sigma_r\sigma_s.
    \end{align*}

    For $\mathbf{i}\in\mathcal E_N$, let $\boldsymbol\zeta^{(\mathbf{i})}$ denote the collection of all coordinates other than the one indexed by $\mathbf{i}$, 
    defined by
    \begin{align*}
        \zeta_{\mathbf{j}}^{(\mathbf{i})}=\begin{cases}
                        \xi_{\mathbf{j}}, & \mathbf{j}\prec\mathbf{i},\\
                        \widehat\xi_{\mathbf{j}}, & \mathbf{i}\prec\mathbf{j},
                      \end{cases}
        \qquad \mathbf{j}\in\mathcal E_N\setminus\{\mathbf{i}\}.
    \end{align*}

    Thus $\boldsymbol\zeta^{(\mathbf{i})}$ is independent of both $\xi_{\mathbf{i}}$ and $\widehat\xi_{\mathbf{i}}$.
    Condition on $\boldsymbol\zeta^{(\mathbf{i})}$, define
    \begin{align*}
        h_{\mathbf{i}}\left(u;\boldsymbol\zeta^{(\mathbf{i})}\right)=\varphi\left(s_N^{-1}\left[\log\left(2^{-N}\sum_{\sigma\in\Sigma_N}
        \exp\left\{\frac{\beta_N}{2\sqrt N}\left(\sum_{\mathbf{j}\in\mathcal E_N\setminus\{\mathbf{i}\}}a_{\mathbf{j}}\zeta_{\mathbf{j}}^{(\mathbf{i})}\sigma_{\mathbf{j}}+a_{\mathbf{i}}u\sigma_{\mathbf{i}}\right)\right\}
        \right)-a_N\right]\right).
    \end{align*}

    By the definition of $(\widehat\xi_{\mathbf{i}})_{\mathbf{i}\in\mathcal E_N}$,
    \begin{align*}
        H_N(\sigma)=\frac{1}{2\sqrt{N}}\sum_{\mathbf{i}=(r,s)\in\mathcal E_N}a_{\mathbf{i}}\widehat\xi_{\mathbf{i}}\sigma_r\sigma_s.
    \end{align*}
    Replacing the coordinates in the order $\prec$ gives
    \begin{align*}
        \mathbb E\varphi\left(\frac{F_{N,\beta_N}^{\boldsymbol\xi}-a_N}{s_N}\right)-\mathbb E\varphi\left(\frac{F_{N,\beta_N}-a_N}{s_N}\right)
        =\sum_{\mathbf{i}\in\mathcal E_N}\mathbb E_{\boldsymbol\zeta^{(\mathbf{i})}}
        \left[\mathbb E_{\xi_{\mathbf{i}}}h_{\mathbf{i}}\left(\xi_{\mathbf{i}};\boldsymbol\zeta^{(\mathbf{i})}\right)-\mathbb E_{\widehat\xi_{\mathbf{i}}}h_{\mathbf{i}}\left(\widehat\xi_{\mathbf{i}};\boldsymbol\zeta^{(\mathbf{i})}\right)\right].
    \end{align*}

    Put $\lambda_{\mathbf{i},N}=\beta_Na_{\mathbf{i}}/(2\sqrt N)$.  If $\mathbf{i}=(r,s)$ with $r<s$, define
    \begin{align*}
        c_{\mathbf{i},\pm}\left(\boldsymbol\zeta^{(\mathbf{i})}\right)=2^{-N}\sum_{\substack{\sigma\in\Sigma_N\\ \sigma_{\mathbf{i}}=\pm1}}
        \exp\left\{\frac{\beta_N}{2\sqrt N}\sum_{\mathbf{j}\in\mathcal E_N\setminus\{\mathbf{i}\}}a_{\mathbf{j}}\zeta_{\mathbf{j}}^{(\mathbf{i})}\sigma_{\mathbf{j}}\right\}
    \end{align*}
    and
    \begin{align*}
        r_{\mathbf{i}}\left(\boldsymbol\zeta^{(\mathbf{i})}\right)=\frac12\log\frac{c_{\mathbf{i},+}\left(\boldsymbol\zeta^{(\mathbf{i})}\right)}{c_{\mathbf{i},-}\left(\boldsymbol\zeta^{(\mathbf{i})}\right)}.
    \end{align*}
    Then the partition sum inside the logarithm in the definition of $h_{\mathbf{i}}$ is
    \begin{align*}
        Z_{\mathbf{i}}\left(u;\boldsymbol\zeta^{(\mathbf{i})}\right)=
        2\sqrt{c_{\mathbf{i},+}\left(\boldsymbol\zeta^{(\mathbf{i})}\right)c_{\mathbf{i},-}\left(\boldsymbol\zeta^{(\mathbf{i})}\right)}\cosh\left(\lambda_{\mathbf{i},N}u+r_{\mathbf{i}}\left(\boldsymbol\zeta^{(\mathbf{i})}\right)\right),
    \end{align*}
    and hence
    \begin{align*}
        \frac{d^m}{du^m}\log Z_{\mathbf{i}}\left(u;\boldsymbol\zeta^{(\mathbf{i})}\right)=
        \lambda_{\mathbf{i},N}^m(\log\cosh)^{(m)}\left(\lambda_{\mathbf{i},N}u+r_{\mathbf{i}}\left(\boldsymbol\zeta^{(\mathbf{i})}\right)\right),
        \qquad 1\leq m\leq5.
    \end{align*}

    If $\mathbf{i}=(r,r)$, then $\sigma_{\mathbf{i}}=1$.  Thus
    \begin{align*}
        Z_{\mathbf{i}}\left(u;\boldsymbol\zeta^{(\mathbf{i})}\right)=
        \left(2^{-N}\sum_{\sigma\in\Sigma_N}
        \exp\left\{\frac{\beta_N}{2\sqrt N}\sum_{\mathbf{j}\in\mathcal E_N\setminus\{\mathbf{i}\}}a_{\mathbf{j}}\zeta_{\mathbf{j}}^{(\mathbf{i})}\sigma_{\mathbf{j}}\right\}\right)e^{\lambda_{\mathbf{i},N}u},
    \end{align*}
    so
    \begin{align*}
        \frac{d}{du}\log Z_{\mathbf{i}}\left(u;\boldsymbol\zeta^{(\mathbf{i})}\right)=\lambda_{\mathbf{i},N},
        \qquad
        \frac{d^m}{du^m}\log Z_{\mathbf{i}}\left(u;\boldsymbol\zeta^{(\mathbf{i})}\right)=0,\qquad 2\leq m\leq5.
    \end{align*}

    Since $|\lambda_{\mathbf{i},N}|\leq BN^{-1/2}$ and the functions $(\log\cosh)^{(m)}$ are bounded on $\mathbb R$ for every $1\leq m\leq 5$, 
    \begin{align}\label{derivation bound}
        \left|\frac{d^m}{du^m}\log Z_{\mathbf{i}}\left(u;\boldsymbol\zeta^{(\mathbf{i})}\right)\right| \leq C_{m,B}N^{-m/2}, \qquad 1\leq m\leq5,
    \end{align}
    uniformly in $\mathbf{i}\in\mathcal E_N$, $u$, and $\boldsymbol\zeta^{(\mathbf{i})}$.  
    Directly,
    \begin{align*}
        \frac{d}{du}h_{\mathbf{i}}\left(u;\boldsymbol\zeta^{(\mathbf{i})}\right)= 
        \frac{\varphi'\left(\left(\log Z_{\mathbf{i}}\left(u;\boldsymbol\zeta^{(\mathbf{i})}\right)-a_N\right)/s_N\right)}{s_N}\frac{d}{du}\log Z_{\mathbf{i}}\left(u;\boldsymbol\zeta^{(\mathbf{i})}\right)
    \end{align*}
    and
    \begin{align*}
        \frac{d^2}{du^2}h_{\mathbf{i}}\left(u;\boldsymbol\zeta^{(\mathbf{i})}\right)&=
        \frac{\varphi''\left(\left(\log Z_{\mathbf{i}}\left(u;\boldsymbol\zeta^{(\mathbf{i})}\right)-a_N\right)/s_N\right)}{s_N^2}
        \left(\frac{d}{du}\log Z_{\mathbf{i}}\left(u;\boldsymbol\zeta^{(\mathbf{i})}\right)\right)^2\\
        &+\frac{\varphi'\left(\left(\log Z_{\mathbf{i}}\left(u;\boldsymbol\zeta^{(\mathbf{i})}\right)-a_N\right)/s_N\right)}{s_N}\frac{d^2}{du^2}\log Z_{\mathbf{i}}\left(u;\boldsymbol\zeta^{(\mathbf{i})}\right).
    \end{align*}

    Applying \eqref{derivation bound}, similarly we get that for $1\leq m\leq 5$, 
    \begin{align}\label{Lindeberg test function derivatives}
        \sup_{\mathbf{i}\in\mathcal E_N}\left\|\frac{d^m}{du^m}h_{\mathbf{i}}\left(u;\boldsymbol\zeta^{(\mathbf{i})}\right)\right\|_\infty
        \leq C_{m,B}N^{-m/2}\sum_{\ell=1}^{m}s_N^{-\ell}\|\varphi^{(\ell)}\|_\infty.
    \end{align}

    Taylor's integral remainder gives
    \begin{align*}
        h_{\mathbf{i}}\left(u;\boldsymbol\zeta^{(\mathbf{i})}\right)-\sum_{m=0}^{4}\frac{\partial_u^m h_{\mathbf{i}}\left(0;\boldsymbol\zeta^{(\mathbf{i})}\right)}{m!}u^m=
        \frac{u^4}{3!}\int_0^1(1-t)^3\left[\partial_u^4h_{\mathbf{i}}\left(tu;\boldsymbol\zeta^{(\mathbf{i})}\right)-\partial_u^4h_{\mathbf{i}}\left(0;\boldsymbol\zeta^{(\mathbf{i})}\right)\right]dt.
    \end{align*}

    Bounding the difference on the R.H.S by 
    \begin{align*}
        \left|\partial_u^4h_{\mathbf{i}}\left(tu;\boldsymbol\zeta^{(\mathbf{i})}\right)-\partial_u^4h_{\mathbf{i}}\left(0;\boldsymbol\zeta^{(\mathbf{i})}\right)\right|
        \leq
        \min\left\{2\|\partial_u^4h_{\mathbf{i}}(\,\cdot\,;\boldsymbol\zeta^{(\mathbf{i})})\|_\infty,|tu|\left\|\partial_u^5h_{\mathbf{i}}\left(\,\cdot\,;\boldsymbol\zeta^{(\mathbf{i})}\right)\right\|_\infty\right\},
    \end{align*}
    we have
    \begin{align*}
        \left|h_{\mathbf{i}}\left(u;\boldsymbol\zeta^{(\mathbf{i})}\right)-\sum_{m=0}^{4}\frac{\partial_u^m h_{\mathbf{i}}\left(0;\boldsymbol\zeta^{(\mathbf{i})}\right)}{m!}u^m\right|\leq
        C|u|^4\times\min\left\{\left\|\partial_u^4h_{\mathbf{i}}\left(\,\cdot\,;\boldsymbol\zeta^{(\mathbf{i})}\right)\right\|_\infty,
        |u|\left\|\partial_u^5h_{\mathbf{i}}\left(\,\cdot\,;\boldsymbol\zeta^{(\mathbf{i})}\right)\right\|_\infty\right\}.
    \end{align*}

    Applying \eqref{Lindeberg test function derivatives} with $m=4,5$ yields
    \begin{align*}
        \left|h_{\mathbf{i}}\left(u;\boldsymbol\zeta^{(\mathbf{i})}\right)-\sum_{m=0}^{4}\frac{\partial_u^m h_{\mathbf{i}}\left(0;\boldsymbol\zeta^{(\mathbf{i})}\right)}{m!}u^m \right| 
        \leq C_BN^{-2} \left(\sum_{\ell=1}^{5}s_N^{-\ell}\|\varphi^{(\ell)}\|_\infty\right)|u|^4\min\left\{1,\frac{|u|}{\sqrt N}\right\}.
    \end{align*}

    Since $\xi_{\mathbf{i}}$ and $\widehat\xi_{\mathbf{i}}$ have identical moments through order three,
    \begin{align*}
        \mathbb E_{\xi_{\mathbf{i}}}h_{\mathbf{i}}\left(\xi_{\mathbf{i}};\boldsymbol\zeta^{(\mathbf{i})}\right)-
        \mathbb E_{\widehat\xi_{\mathbf{i}}}h_{\mathbf{i}}\left(\widehat\xi_{\mathbf{i}};\boldsymbol\zeta^{(\mathbf{i})}\right)
        =\frac{\mathbb E[\xi_{\mathbf{i}}^4]-3}{4!}\partial_u^4h_{\mathbf{i}}\left(0;\boldsymbol\zeta^{(\mathbf{i})}\right)
        +\mathbb E_{\xi_{\mathbf{i}}}\mathcal R_{\mathbf{i}}\left(\xi_{\mathbf{i}};\boldsymbol\zeta^{(\mathbf{i})}\right)
        -\mathbb E_{\widehat\xi_{\mathbf{i}}}\mathcal R_{\mathbf{i}}\left(\widehat\xi_{\mathbf{i}};\boldsymbol\zeta^{(\mathbf{i})}\right)
        ,
    \end{align*}
    where
    \begin{align*}
        \mathcal R_{\mathbf{i}}\left(u;\boldsymbol\zeta^{(\mathbf{i})}\right)
        =h_{\mathbf{i}}\left(u;\boldsymbol\zeta^{(\mathbf{i})}\right)
        -\sum_{m=0}^{4}\frac{\partial_u^m h_{\mathbf{i}}\left(0;\boldsymbol\zeta^{(\mathbf{i})}\right)}{m!}u^m.
    \end{align*}

    Hence, for every realization of $\boldsymbol\zeta^{(\mathbf{i})}$,
    \begin{align*}
        \left|\mathbb E_{\xi_{\mathbf{i}}}h_{\mathbf{i}}\left(\xi_{\mathbf{i}};\boldsymbol\zeta^{(\mathbf{i})}\right)- 
        \mathbb E_{\widehat\xi_{\mathbf{i}}}h_{\mathbf{i}}\left(\widehat\xi_{\mathbf{i}};\boldsymbol\zeta^{(\mathbf{i})}\right)\right|
        &\leq C_BN^{-2}\left|\mathbb E[\xi_{\mathbf{i}}^4]-3\right|
        \sum_{\ell=1}^{4}s_N^{-\ell}\|\varphi^{(\ell)}\|_\infty
        \\
        &\quad+C_BN^{-2}\left(\tau_N(\xi_{\mathbf{i}})+\tau_N(\widehat\xi)\right)
        \sum_{\ell=1}^{5}s_N^{-\ell}\|\varphi^{(\ell)}\|_\infty.
    \end{align*}

    Since $\widehat\xi\sim\mathcal N(0,1)$,
    \begin{align*}
        \tau_N(\widehat\xi)\leq N^{-1/2}\mathbb E|\widehat\xi|^5\leq CN^{-1/2}.
    \end{align*}

    Thus the preceding bound becomes
    \begin{align*}
        \left|\mathbb E_{\xi_{\mathbf{i}}}h_{\mathbf{i}}\left(\xi_{\mathbf{i}};\boldsymbol\zeta^{(\mathbf{i})}\right)- 
        \mathbb E_{\widehat\xi_{\mathbf{i}}}h_{\mathbf{i}}\left(\widehat\xi_{\mathbf{i}};\boldsymbol\zeta^{(\mathbf{i})}\right)\right|
        &\leq C_BN^{-2}\left|\mathbb E[\xi_{\mathbf{i}}^4]-3\right|
        \sum_{\ell=1}^{4}s_N^{-\ell}\|\varphi^{(\ell)}\|_\infty
        \\
        &\quad+C_B\left(N^{-2}\tau_N(\xi_{\mathbf{i}})+N^{-5/2}\right)
        \sum_{\ell=1}^{5}s_N^{-\ell}\|\varphi^{(\ell)}\|_\infty.
    \end{align*}

    Taking absolute values in the preceding telescoping identity and using
    $|\mathbb E_{\boldsymbol\zeta^{(\mathbf{i})}}Y|\leq\mathbb E_{\boldsymbol\zeta^{(\mathbf{i})}}|Y|$, we obtain
    \begin{align*}
        \left|\mathbb E\varphi\left(\frac{F_{N,\beta_N}^{\boldsymbol\xi}-a_N}{s_N}\right)
        -\mathbb E\varphi\left(\frac{F_{N,\beta_N}-a_N}{s_N}\right)\right|
        &\leq C_B\left(\Xi_N+N^{-1/2}\right)
        \sum_{\ell=1}^{5}s_N^{-\ell}\|\varphi^{(\ell)}\|_\infty.
    \end{align*}

    This proves \eqref{four moment comparison estimate}.

    The Kolmogorov estimate \eqref{four moment Kolmogorov comparison} is the standard smoothing consequence of \eqref{four moment comparison estimate}. Namely, with $Y_{\boldsymbol\xi}=(F_{N,\beta_N}^{\boldsymbol\xi}-a_N)/s_N$ and $Y_W=(F_{N,\beta_N}-a_N)/s_N$, one approximates the indicators $(-\infty,x]$ from above and below by smooth cutoffs with transition width $\varepsilon$ and derivative bounds $O(\varepsilon^{-r})$, applies \eqref{four moment comparison estimate} to the two cutoffs, and uses the density bound on $Z$ to obtain
    \begin{align*}
        d_{\mathrm K}\left(Y_{\boldsymbol\xi},Z\right)
        \leq d_{\mathrm K}\left(Y_W,Z\right)+M\varepsilon+C_B\left(\Xi_N+N^{-1/2}\right)\sum_{r=1}^{5}(s_N\varepsilon)^{-r}.
    \end{align*}

    Take
    \begin{align*}
        \varepsilon=s_N^{-1}\max\left\{
            \left((\Xi_N+N^{-1/2})s_N\right)^{1/6},
            \left((\Xi_N+N^{-1/2})s_N\right)^{1/2}
        \right\}.
    \end{align*}
    This gives \eqref{four moment Kolmogorov comparison}.
\end{proof}

\section{\texorpdfstring{Further lemmas for the proof of Proposition~\ref{L2 comparison} for $b>0$}{Further lemmas for the proof of Proposition}}\label{sec:further-positive-lemmas}
For the rest of the paper, we write
\begin{align*}
    \mathcal{Q}_N=\left\{-1,-1+\frac2N,\ldots,1-\frac2N,1\right\},\quad p_N(q)=2^{-N}\binom{N}{N(1+q)/2},\qquad q\in\mathcal Q_N.
\end{align*}
\begin{proof}[Proof of Lemma~\ref{tail bound implies L2}]
    For fixed $x\in\Sigma_N$ and $q\in\mathcal Q_N$, the relation $R(x,y)=q$ means that $y$ agrees with $x$ at exactly $N(1+q)/2$ coordinates. 
    Hence
    \begin{align*}
        \#\{y\in\Sigma_N:R(x,y)=q\}=\binom{N}{N(1+q)/2}.
    \end{align*}

    For the discrete overlap values $q\in\mathcal Q_N$, the definitions of $X_{N,\beta}$ and $J_\beta(q)$ give
    \begin{align*}
        X_{N,\beta}=\frac{Z_{N,\beta}}{Z_{N,\beta}^{sph}},\qquad J_\beta(q)=\mathbb E\left[\frac{e^{\beta H_N(x)+\beta H_N(y)}}{(Z_{N,\beta}^{sph})^2}\right].
    \end{align*}
    
    thus
    \begin{equation}
        \begin{aligned}
            \mathbb{E}X_{N,\beta}^2
            &=\mathbb{E}\left[\frac{\left(2^{-N}\sum_{x\in\Sigma_N}e^{\beta H_N(x)}\right)^2}{(Z_{N,\beta}^{sph})^2}\right]\\
            &=4^{-N}\sum_{x,y\in\Sigma_N}\mathbb{E}\left[\frac{e^{\beta H_N(x)+\beta H_N(y)}}{(Z_{N,\beta}^{sph})^2}\right]\\
            &=4^{-N}\sum_{q\in\mathcal Q_N}\sum_{x\in\Sigma_N}\sum_{\substack{y\in\Sigma_N\\ R(x,y)=q}}J_{\beta}(q)\\
            &=4^{-N}\sum_{q\in\mathcal Q_N}2^N\binom{N}{N(1+q)/2}J_{\beta}(q)\\
            &=\sum_{q\in\mathcal{Q}_N}p_N(q)J_{\beta}(q).
        \end{aligned}
    \end{equation}

    On the other hand, two independent Haar points $x,y$ on $S_N$ have overlap density $\rho_N$, and hence
    \begin{align}
        \int_{-1}^{1}\rho_N(q)J_{\beta}(q)dq
        &=\mathbb{E}_{x,y}[J_{\beta}(R(x,y))]\notag\\
        &=\mathbb{E}_W[(Z_{N,\beta}^{sph})^{-2}\mathbb{E}_{x,y}[e^{\beta H_N(x)+\beta H_N(y)}]]\notag\\
        &=1.\label{continuous identity}
    \end{align}

    Finally, by rotational invariance of $W$, $\mathbb{E}X_{N,\beta}=1$. 
    Therefore, the following second-moment identity holds.\footnotemark
    \begin{align}\label{second moment quadrature identity} 
        \mathbb{E}[(X_{N,\beta}-1)^2]= \sum_{q\in\mathcal{Q}_N}p_N(q)J_{\beta}(q)-\int_{-1}^{1}\rho_N(q)J_{\beta}(q)dq.
    \end{align}
    \footnotetext{This identity appears in \cite{DuHuang}; we give a proof here for completeness.}

    We split the right side into a center part, a middle tail, and a far tail. 
    Let
    \begin{align*}
        q_*=\min\{q\in\mathcal{Q}_N:q\geq u_{\beta}\}.
    \end{align*}

    Then
    \begin{align*}
        u_{\beta}\leq q_*<u_{\beta}+\frac2N.
    \end{align*}

    In the following center comparison, $q\in\mathcal Q_N$ is a discrete overlap value, while $t$ is a continuous overlap variable.
    We need the following estimation of $J_{\beta}(q)$, which shows that $J_{\beta}(q)$ is log-Lip continuous and upper bounded.
    \begin{lemma}\label{K beta derivative bound}
        For $-1<q<1$,
        \begin{align}\label{J local derivative bound} 
            \left|\frac{d}{dq}\log J_{\beta}(q)\right| \leq N\beta^2\left|q\right|,
        \end{align}
        and for $-1\leq q\leq1$,
        \begin{align}\label{J rough upper bound}
            J_{\beta}(q)\leq e^{N\beta^2q^2/2}.
        \end{align}
    \end{lemma}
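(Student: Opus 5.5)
The plan is to turn $J_\beta$ into an explicit Gaussian prefactor times an expectation involving a rank-two deformation of $W$, and to read both bounds off that representation.

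\textbf{Step 1 (change of measure).} Fix $x,y\in S_N$ with $R(x,y)=q$. The field $H_N$ is centered Gaussian with $\operatorname{Var}H_N(x)=N/2$ and $\operatorname{Cov}(H_N(x),H_N(y))=Nq^2/2$. Hence
\begin{align*}
\mathbb E e^{\beta H_N(x)+\beta H_N(y)}=e^{N\beta^2(1+q^2)/2}.
\end{align*}
By the Cameron--Martin formula, under the tilted law $W\overset{d}{=}W'+P$, where $W'\sim\mathrm{GOE}(N)$ and $P=\frac{\beta}{N}(xx^T+yy^T)$; this shift is $\operatorname{Cov}(W_{ij},H_N(x)+H_N(y))$ multiplied by $\beta$. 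The matrix $P$ has eigenvalues $\beta(1\pm q)$ with eigenvectors $(x\pm y)/\|x\pm y\|$. By orthogonal invariance of $W'$ and of $Z^{sph}$ we may take these eigenvectors to be $e_1,e_2$. This gives
\begin{align*}
J_\beta(q)=e^{N\beta^2(1+q^2)/2}K(q),\qquad K(q)=\mathbb E\left[Z^{sph}_{N,\beta}\left(W+\beta(1+q)e_1e_1^T+\beta(1-q)e_2e_2^T\right)^{-2}\right],
\end{align*}
which extends smoothly to all $q\in(-1,1)$.

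\textbf{Step 2 (differentiate).} Write $\langle\cdot\rangle_q$ for the spherical Gibbs measure with the deformed matrix, and set $D=z_1^2-z_2^2\in[-N,N]$. Then $\partial_q\log Z^{sph}=\frac{\beta^2}{2}\langle D\rangle_q$. Let $\widetilde{\mathbb E}_q$ denote expectation reweighted by $Z^{sph}(\cdot)^{-2}/K(q)$. We get
\begin{align*}
\frac{d}{dq}\log J_\beta(q)=N\beta^2q-\beta^2\,\widetilde{\mathbb E}_q\langle D\rangle_q .
\end{align*}
Both $J_\beta$ and $K$ are even in $q$: replace $y$ by $-y$, or swap $e_1,e_2$, using $W\overset d=\Pi W\Pi^T$ for the transposition $\Pi$. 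So it suffices to take $q\ge0$. There, \eqref{J local derivative bound} amounts to the two-sided bound
\begin{align*}
0\le\widetilde{\mathbb E}_q\langle D\rangle_q\le 2Nq .
\end{align*}

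\textbf{Step 3 (the main obstacle).} For the lower bound I will use the swap symmetry. At $q=0$ the reweighted quantity vanishes, and raising the weight on the $e_1$ direction should push spherical mass toward $z_1$. I would first try to prove this by writing $D$ as a derivative of the convex function $q\mapsto\log Z^{sph}$, and comparing with the reflected configuration $q\mapsto-q$, for which $\widetilde{\mathbb E}_{-q}\langle D\rangle_{-q}=-\widetilde{\mathbb E}_q\langle D\rangle_q$.

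The upper bound $2Nq$ is the delicate point. I would attempt it by a second Gaussian integration by parts in the opposite direction. Re-absorb the spike $P$ into the Gaussian via the inverse Cameron--Martin shift, but now with $q$ replaced by $q'\in[0,q]$. This should express $\widetilde{\mathbb E}_q\langle D\rangle_q$ as a covariance of Gaussian quantities, bounded by $\operatorname{Cov}$-type terms of order $N(1+q)-N(1-q)=2Nq$. If this fails, I would fall back on the crude bound $|D|\le N$ in a neighbourhood of $q=\pm1$ and a separate argument elsewhere. I expect this step to need the most care.

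\textbf{Step 4 (rough bound).} Step 3 makes $\log J_\beta$ nondecreasing in $|q|$, with derivative at most $N\beta^2|q|$. Monotonicity gives $J_\beta(0)\le\int_{-1}^1\rho_N(t)J_\beta(t)\,dt=1$ by \eqref{continuous identity}. Integrating the derivative bound from $0$ to $q$ then yields $J_\beta(q)\le e^{N\beta^2q^2/2}$ for $|q|<1$. The endpoints $q=\pm1$ follow by continuity.
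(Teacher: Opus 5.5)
Your Steps 1--2 are correct and coincide with the paper's start: the Cameron--Martin shift gives $J_\beta(q)=e^{N\beta^2(1+q^2)/2}K_\beta(q)$, and $\frac{d}{dq}\log K_\beta(q)=-\beta^2\widetilde{\mathbb E}_q\langle D\rangle_q$. The gap is Step 3. It carries the entire content of the lemma, it contains no argument, and the heuristics you list do not supply one.

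For fixed $W$, convexity of $q\mapsto\log Z^{sph}_{N,\beta}$ only makes $\langle D\rangle_q$ nondecreasing. The reweighting $\widetilde{\mathbb E}_q$ also moves with $q$, and differentiating it produces the term $-\beta^2\operatorname{Var}_{\widetilde{\mathbb E}_q}(\langle D\rangle_q)$, which has the wrong sign. The reflection $q\mapsto-q$ gives only oddness, hence $\widetilde{\mathbb E}_0\langle D\rangle_0=0$, not a sign for $q>0$.

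The missing idea, which is how the paper proceeds, is to move $q$ into the Gaussian weight. With $S=e_1e_1^T+e_2e_2^T$, $D=e_1e_1^T-e_2e_2^T$ and $M=W+\beta S+\beta qD$,
\[
K_\beta(q)\propto\int Z^{sph}_{N,\beta}(M)^{-2}\exp\Bigl\{-\frac N4\|M-\beta S-\beta qD\|_F^2\Bigr\}\,dM .
\]
Hölder's inequality shows that $M\mapsto Z^{sph}_{N,\beta}(M)$ is log-convex. Hence the integrand is jointly log-concave in $(M,q)$, and Prékopa--Leindler makes $f=\log K_\beta$ concave. Differentiating the Gaussian factor twice gives $f''(q)=-N\beta^2+\frac{N^2\beta^2}{4}\operatorname{Var}_q\bigl((M-\beta S-\beta qD,D)_F\bigr)\ge-N\beta^2$. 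Since $f$ is even, $f'(0)=0$. Therefore $-N\beta^2q\le f'(q)\le0$ for $q\ge0$, that is, $0\le\widetilde{\mathbb E}_q\langle D\rangle_q\le Nq$.

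Note also that your target upper bound $2Nq$ is too weak for your own Step 4. It yields $|\frac{d}{dq}\log J_\beta|\le N\beta^2|q|$. But monotonicity of $J_\beta$ in $|q|$ requires $\frac{d}{dq}\log J_\beta\ge0$ for $q\ge0$, i.e. $\widetilde{\mathbb E}_q\langle D\rangle_q\le Nq$.

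Without monotonicity you cannot deduce $J_\beta(0)\le1$ from $\int\rho_NJ_\beta=1$. The two-sided derivative bound only gives $J_\beta(t)\ge J_\beta(0)e^{-N\beta^2t^2/2}$. Hence $J_\beta(0)\le C(\beta)$ and $J_\beta(q)\le C(\beta)e^{N\beta^2q^2/2}$, which misses the stated bound by a constant factor. Your fallback $|D|\le N$ does not rescue this, since $N\le Nq$ fails for every $q<1$.
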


    The proof of Lemma~\ref{K beta derivative bound} is given in Section~\ref{sec:remaining-auxiliary-lemmas}.

    The intervals $(q-N^{-1},q+N^{-1})$, indexed by $q\in\mathcal Q_N$ with $\left|q\right|\leq q_*$, partition $\left(-q_*-N^{-1},q_*+N^{-1}\right)$ up to their endpoints, which form a Lebesgue-null set.
    We now compare each lattice term with the spherical integral in the neighbor.
    Lemma~3.9 of \cite{DuHuang} gives, uniformly for $q\in\mathcal Q_N$ with $\left|q\right|\leq q_*$ and
    $|t-q|\leq N^{-1}$,
    \begin{align}\label{cube sphere stirling center} 
        p_N(q)e^{Nq^2/2}=\frac2N\rho_N(t)e^{Nt^2/2}\left[1+O\left(N^{-1}+q^2+Nq^4\right)\right].
    \end{align}

    By \eqref{J local derivative bound}, for such $q,t$,
    \begin{align*}
        \left|\log\frac{J_{\beta}(q)}{J_{\beta}(t)}\right|
        \leq \int_{\min\{q,t\}}^{\max\{q,t\}}N\beta^2|s|ds
        \leq CN(|q|+N^{-1})|t-q|
        \leq C(|q|+N^{-1}).
    \end{align*}

    Since $Nq_*^4=o(1)$, this implies
    \begin{align*}
        J_{\beta}(q)=J_{\beta}(t)\left[1+O\left(|q|+N^{-1}\right)\right].
    \end{align*}

    Also
    \begin{align*}
        e^{N(t^2-q^2)/2}=1+O\left(|q|+N^{-1}\right),
    \end{align*}
    because $N|t^2-q^2|\leq C(|q|+N^{-1})$.

    Therefore, we have
    \begin{align}\label{one cell comparison detailed}
        p_N(q)J_{\beta}(q)
        &=\frac{N}{2}\int_{|t-q|\leq N^{-1}}p_N(q)J_{\beta}(q)dt\nonumber\\
        &=e^{-Nq^2/2}\int_{|t-q|\leq N^{-1}}\rho_N(t)J_{\beta}(q)e^{Nt^2/2}\left[1+O\left(N^{-1}+q^2+Nq^4\right)\right]dt\nonumber\\
        &=\int_{|t-q|\leq N^{-1}}\rho_N(t)J_{\beta}(t)\left[1+O\left(|q|+N^{-1}\right)\right]\left[1+O\left(N^{-1}+q^2+Nq^4\right)\right]dt\nonumber\\
        &=\left[1+O\left(N^{-1}+|q|+q^2+Nq^4\right)\right]\int_{q-N^{-1}}^{q+N^{-1}}\rho_N(t)J_{\beta}(t)dt.
    \end{align}

    {
    All the implicit constants in \eqref{cube sphere stirling center}--\eqref{one cell comparison detailed} are independent of $N$, $q$, and $b_N$.
    Since $b_N\geq b_0$,
    \begin{align*}
        q_*\leq L(\beta-1)+\frac2N\leq C(\beta-1).
    \end{align*}
    Hence summing the errors gives
    \begin{align}\label{center quadrature bound}
        \left| \sum_{\substack{q\in\mathcal Q_N\\ \left|q\right|\leq q_*}}p_N(q)J_{\beta}(q)-\int_{\left|t\right|\leq q_*+N^{-1}}\rho_N(t)J_{\beta}(t)dt\right|
        &\leq C\sum_{\substack{q\in\mathcal Q_N\\ \left|q\right|\leq q_*}}\left(N^{-1}+\left|q\right|+q^2+Nq^4\right)\int_{q-N^{-1}}^{q+N^{-1}}\rho_N(t)J_{\beta}(t)dt\notag\\
        &\leq C\left(N^{-1}+q_*+q_*^2+Nq_*^4\right)\int_{\left|t\right|\leq q_*+N^{-1}}\rho_N(t)J_{\beta}(t)dt\notag\\
        &\leq C\left(N^{-1}+q_*+q_*^2+Nq_*^4\right)\notag\\
        &\leq C\left(N^{-1}+(\beta-1)+(\beta-1)^2+N(\beta-1)^4\right).
    \end{align}
    }

    We next treat the middle ranges $q_*<\left|q\right|\leq v_{\beta}$: $q$ is continuous in the spherical estimates and belongs to $\mathcal Q_N$ in the lattice estimates.
    Since $q_*\geq u_{\beta}$, throughout this range, for large $N$,
    \begin{align}\label{Delta lower bounds middle range} 
        \delta=\beta\left|q\right|-\beta+1=\beta\left|q\right|-\frac{\beta u_{\beta}}{L}
        \geq\beta\left(1-\frac1L\right)\left|q\right|\geq\frac{L-1}{2}(\beta-1).
    \end{align}

    {
    Hence \eqref{main tail bound} gives the spherical middle tail estimate
    \begin{align}\label{spherical middle tail bound} 
        \int_{q_*+N^{-1}\leq\left|q\right|\leq v_{\beta}}\rho_N(q)J_{\beta}(q)dq
        \leq N^C\exp\{-cN(\beta-1)^3\}.
    \end{align}
    }

    We now pass from the spherical middle tail to the lattice middle tail. 
    The same Stirling expansion used in Lemma~3.9 of \cite{DuHuang} gives, 
    uniformly for $q\in\mathcal Q_N$ with $\left|q\right|\leq v_{\beta}=o(1)$,
    \begin{align}\label{cube sphere density ratio middle} 
        \frac{p_N(q)}{(2/N)\rho_N(q)}\leq C\exp\left\{N\left(\frac{q^4}{6}+Cq^6\right)\right\}\leq C\exp\{CNq^4\}.
    \end{align}

    Since $\left|q\right|\leq v_{\beta}=o(1)$, \eqref{Delta lower bounds middle range} implies
    \begin{align}\label{quartic absorbed by cubic middle} 
        q^4\leq C_Lv_{\beta}\delta^3=o(\delta^3)
    \end{align}
    uniformly in the middle range. 
    Combining \eqref{cube sphere density ratio middle} with \eqref{main tail bound}, and then using \eqref{quartic absorbed by cubic middle}, we get, for $q\in\mathcal Q_N$ with $q_*<\left|q\right|\leq v_{\beta}$,
    \begin{align*}
        p_N(q)J_{\beta}(q)&\leq \frac{C}{N}\exp\{CNq^4\}\rho_N(q)J_{\beta}(q)\\
        &\leq N^C\exp\{CNq^4-cN\delta^3\}\\
        &\leq N^C\exp\{-cN\delta^3\}.
    \end{align*}

    {
    Since $\mbox{Card}\{\mathcal{Q}_N\}\leq N+1$, we obtain
    \begin{align}\label{cube middle tail bound}
        \sum_{\substack{q\in\mathcal{Q}_N\\q_*<\left|q\right|\leq v_{\beta}}}p_N(q)J_{\beta}(q)
        \leq N^C\exp\{-cN(\beta-1)^3\}.
    \end{align}
    }

    Finally we handle $\left|q\right|\geq v_{\beta}$. 
    For the spherical tail, the definition of $\rho_N$ and \eqref{J rough upper bound} give, for $-1<q<1$,
    \begin{align}\label{spherical far tail pointwise}
        \rho_N(q)J_{\beta}(q)&\leq C\sqrt{N}\exp\left\{\frac{N-3}{2}\log(1-q^2)+\frac{N\beta^2q^2}{2}\right\}\nonumber\\
        &\leq C\sqrt{N}\exp\left\{C+\frac{N}{2}(\beta^2-1)q^2-\frac{N}{8}q^4\right\}.
    \end{align}

    In the last line we used $\log(1-x)\leq -x-x^2/2$ for $0\leq x<1$.
    {
    Since $\beta\to1$, for all sufficiently large $N$,
    \begin{align*}
        \beta^2-1\leq 3(\beta-1).
    \end{align*}
    }

    If $\left|q\right|\geq v_{\beta}=M\sqrt{\beta-1}$, then
    \begin{align}\label{positive quadratic absorbed sphere} 
        \frac{N}{2}(\beta^2-1)q^2\leq\frac{3N}{2}(\beta-1)q^2\leq\frac{3N}{2M^2}q^4.
    \end{align}

    Choosing $M$ large enough so that $3/(2M^2)\leq1/16$, \eqref{spherical far tail pointwise} and
    \eqref{positive quadratic absorbed sphere} imply
    \begin{align*}
        \rho_N(q)J_{\beta}(q) \leq C\sqrt{N}e^{-cNq^4},\qquad\left|q\right|\geq v_{\beta}.
    \end{align*}

    {
    Therefore
    \begin{align}\label{spherical far tail bound} 
        \int_{\left|q\right|\geq v_{\beta}}\rho_N(q)J_{\beta}(q)dq
        \leq C\sqrt{N}e^{-cNv_{\beta}^4}
        \leq N^C\exp\{-cN(\beta-1)^2\}.
    \end{align}
    }

    For $q\in\mathcal Q_N$, let
    \begin{align*}
        I(q)=\frac12\left[(1+q)\log(1+q)+(1-q)\log(1-q)\right].
    \end{align*}

    By the standard binomial entropy bound,
    \begin{align}\label{binomial entropy upper} 
        p_N(q)\leq Ce^{-NI(q)}\leq Ce^{-N\left(q^2/2+q^4/12\right)}.
    \end{align}

    Increasing $M$ if necessary so that $3/(2M^2)\leq1/24$, \eqref{J rough upper bound}, \eqref{binomial entropy upper} and \eqref{positive quadratic absorbed sphere} give, for $q\in\mathcal Q_N$ with $\left|q\right|\geq v_{\beta}$,
    \begin{align*}
        p_N(q)J_{\beta}(q)
        &\leq\exp\left\{-NI(q)+\frac{N\beta^2q^2}{2}\right\}\nonumber\\
        &\leq\exp\left\{\frac{N}{2}(\beta^2-1)q^2-\frac{N}{12}q^4\right\}\nonumber\\
        &\leq e^{-cNq^4}.
    \end{align*}

    {
    Since $\mbox{Card}\{\mathcal{Q}_N\}\leq N+1$, we obtain
    \begin{align}\label{cube far tail bound}
        \sum_{\substack{q\in\mathcal{Q}_N\\ \left|q\right|\geq v_{\beta}}}p_N(q)J_{\beta}(q)
        \leq CNe^{-cNv_{\beta}^4}
        \leq N^C\exp\{-cN(\beta-1)^2\}.
    \end{align}
    }

    {
    Combining \eqref{second moment quadrature identity}, \eqref{center quadrature bound},
    \eqref{spherical middle tail bound}, \eqref{cube middle tail bound}, \eqref{spherical far tail bound} and
    \eqref{cube far tail bound}, we get
    \begin{align*}
        \mathbb{E}[(X_{N,\beta}-1)^2]
        &\leq C\left(N^{-1}+(\beta-1)+(\beta-1)^2+N(\beta-1)^4\right)\notag\\
        &\quad+N^C\exp\{-cN(\beta-1)^3\}
        +N^C\exp\{-cN(\beta-1)^2\}.
    \end{align*}

    This proves \eqref{moderately supercritical L2 bound}.
    }
\end{proof}

\begin{proof}[Proof of Lemma~\ref{direct reduction to Delta F}]
    Since $J_{\beta}$ is even, it is enough to consider $0\leq q<1$. 
    In the application of this lemma, $u_{\beta}\leq|q|\leq v_{\beta}=o(1)$, so the endpoint values $q=\pm1$ do not arise.
    Write the spectral decomposition as 
    \begin{align*}
        W=O^T\Lambda O=O^T\operatorname{diag}(\lambda_1,\ldots,\lambda_N)O,\quad \lambda_1>\lambda_2>\cdots>\lambda_N.
    \end{align*}

    After adjoining independent Rademacher signs to the ordered eigenvectors, the resulting eigenbasis $O$ is Haar distributed and independent of $\Lambda$.
    We first condition on $\Lambda$ and put
    \begin{align*}
        u=\frac{O(x+y)}{\sqrt{2(1+q)}},\qquad v=\frac{O(x-y)}{\sqrt{2(1-q)}}.
    \end{align*}

    Since $x^Tx=y^Ty=N$ and $x^Ty=Nq$, we have $u,v\in S_N$ and $u^Tv=0$. By definition,
    \begin{align*}
        \beta H_N(x)+\beta H_N(y)=\frac{\beta}{2}\left[(1+q)u^T\Lambda u+(1-q)v^T\Lambda v\right].
    \end{align*}

    Since $O$ is Haar, the pair $(u,v)$ is uniformly distributed on
    \begin{align*}
        \{(u,v)\in S_N^2:u^Tv=0\}.
    \end{align*}

    In particular, $u$ has distribution $\nu_N$ and, conditional on a fixed $u\in S_N$, $v$ has the normalized Haar distribution on $S_N\cap u^{\perp}$; 
    denote this conditional measure by $\nu_u$.

    For $\theta>0$, let
    \begin{align*}
        Z_{\Lambda}(\theta)=\int_{S_N}\exp\left\{\frac{\theta}{2}z^T\Lambda z\right\}d\nu_N(z).
    \end{align*}

    Condition on $\Lambda$, this is precisely $Z_{N,\theta}^{sph}$. 
    Consequently,
    \begin{align}\label{conditional J exact integral} 
        J_{\beta}(q)=\mathbb{E}_{\Lambda}\left[\frac{I_{\Lambda}(q)}{Z_{\Lambda}(\beta)^2}\right],
    \end{align}
    where
    \begin{align*}
        I_{\Lambda}(q)=\int_{S_N}\exp\left\{\frac{\beta(1+q)}{2}u^T\Lambda u\right\}B_{\Lambda}(u)d\nu_N(u),
    \end{align*}
    and, for fixed $u\in S_N$,
    \begin{align*}
        B_{\Lambda}(u)=\int_{S_N\cap u^{\perp}}\exp\left\{\frac{\beta(1-q)}{2}v^T\Lambda v\right\}d\nu_u(v).
    \end{align*}

    Let $w$ be uniform on $S_N$. Relative to the fixed direction $u$, write
    \begin{align*}
        w=tu+\sqrt{1-t^2}\,v,\qquad -1\leq t\leq1,\quad v\in S_N\cap u^{\perp}.
    \end{align*}

    Then $v$ has distribution $\nu_u$, and $t$ has density
    \begin{align*}
        \rho_N(t)=\frac{\Gamma(N/2)}{\sqrt{\pi}\,\Gamma((N-1)/2)}(1-t^2)^{(N-3)/2}.
    \end{align*}

    Hence
    \begin{align}\label{full sphere slice decomposition} 
        Z_{\Lambda}(\beta(1-q))=\int_{-1}^1\rho_N(t)\int_{S_N\cap u^{\perp}}\exp\left\{\frac{\beta(1-q)}{2}w^T\Lambda w\right\}d\nu_u(v)dt.
    \end{align}

    We restrict the outer integral to $\left|t\right|\leq N^{-1}$. In this range,
    \begin{align}\label{vector norm bound}
        \left\|w-v\right\|\leq\sqrt{N}\left(|t|+\left|1-\sqrt{1-t^2}\right|\right)\leq\frac{C}{\sqrt{N}}.
    \end{align}

    Since $\Lambda$ is diagonal, let $w=(w_i)$ and $v=(v_i)$.  By \eqref{vector norm bound},
    \begin{align*}
        \left|w^T\Lambda w-v^T\Lambda v\right|&=\left|\sum_{i=1}^N\lambda_i(w_i^2-v_i^2)\right|\\
        &\leq \Vert\Lambda\Vert_{op}\sum_{i=1}^N\left|(w_i+v_i)(w_i-v_i)\right|\\
        &\leq \Vert\Lambda\Vert_{op}\left(\sum_{i=1}^N(w_i+v_i)^2\right)^{1/2}\left\|w-v\right\|\\
        &\leq C\Vert\Lambda\Vert_{op}.
    \end{align*}

    It follows that
    \begin{align*}
        \exp\left\{\frac{\beta(1-q)}{2}w^T\Lambda w\right\}\geq e^{-C\beta(1-q)\Vert\Lambda\Vert_{op}}\exp\left\{\frac{\beta(1-q)}{2}v^T\Lambda v\right\}.
    \end{align*}

    Moreover, the Gamma ratio estimate and $(1-t^2)^{(N-3)/2}\geq c$ on $\left|t\right|\leq N^{-1}$ give
    \begin{align*}
        \int_{-N^{-1}}^{N^{-1}}\rho_N(t)dt\geq cN^{-1/2}.
    \end{align*}

    Plugging these bounds into \eqref{full sphere slice decomposition} yields, uniformly in $u$,
    \begin{align}\label{slice upper bound} 
        B_{\Lambda}(u) \leq C\sqrt{N}\,e^{C\beta(1-q)\Vert\Lambda\Vert_{op}}Z_{\Lambda}(\beta(1-q)).
    \end{align}

    Inserting \eqref{slice upper bound} into $I_{\Lambda}(q)$, we obtain
    \begin{align*}
        I_{\Lambda}(q)\leq C\sqrt{N}\,e^{C\beta(1-q)\Vert\Lambda\Vert_{op}}Z_{\Lambda}(\beta(1-q))Z_{\Lambda}(\beta(1+q)).
    \end{align*}

    Since $\beta(1-q)$ is bounded in the range under consideration and $\Vert\Lambda\Vert_{op}=\Vert W\Vert_{op}$, 
    there is a constant $C_0>0$ such that \eqref{conditional J exact integral} gives
    \begin{align*}
        J_{\beta}(q)&\leq C\sqrt{N}\mathbb{E}\left[e^{C_0\Vert W\Vert_{op}}\frac{Z_{\Lambda}(\beta(1+q))Z_{\Lambda}(\beta(1-q))}{Z_{\Lambda}(\beta)^2}\right]\\
        &=C\sqrt{N}\,\mathbb{E}\exp\left\{F_{N,\beta(1+q)}^{sph}+F_{N,\beta(1-q)}^{sph}-2F_{N,\beta}^{sph}+C_0\Vert W\Vert_{op}\right\}.
    \end{align*}

    This is \eqref{J direct Delta F bound}.

\end{proof}

\begin{proof}[Proof of Lemma~\ref{Delta F exponential upper}]
    Both $\rho_N(q)$ and $\Delta F_N^{sph}(q)$ are even functions of $q$. 
    Thus it is enough to prove the claim for
    \begin{align*}
        u_{\beta}\leq q\leq v_{\beta},
    \end{align*}
    where $\delta=\beta q-\beta+1$.

    Let $\lambda_1\geq\cdots\geq\lambda_N$ be the eigenvalues of $W$. 
    Put
    \begin{align*}
        \rho_{sc}(x)&=\frac{1}{2\pi}\sqrt{(4-x^2)_+},\qquad
        m_{sc}(z)=\int_{-2}^{2}\frac{\rho_{sc}(x)}{z-x}\,dx,\quad z>2,\\
        l_{sc}(z)&=\int_{-2}^{2}\log(z-x)\rho_{sc}(x)\,dx,\qquad
        l_{sc}'(z)=m_{sc}(z),\\
        L_N(z)&=\frac1N\sum_{i=1}^N\log(z-\lambda_i),\qquad z>\lambda_1.
    \end{align*}

    For $\theta>0$, define
    \begin{align*}
        G_{\theta}(z)=\theta z-L_N(z), \quad z>\lambda_1.
    \end{align*}

    Put
    \begin{align*}
        \Phi_{\lambda_1}(\theta)=\inf_{z>\lambda_1}G_{\theta}(z).
    \end{align*}

    We apply \cite[Lemma~1.3]{BaikLee} to the one-replica spherical integral. 
    The parameter $\beta$ in \cite[Lemma~1.3]{BaikLee} is $\theta/2$ under our normalization $H_N(x)=x^TWx/2$. 
    Hence, for any $\gamma>\lambda_1$,
    \begin{align}\label{BL contour identity} 
        Z_N^{sph}(\theta) =\mathcal{C}_N\theta^{1-N/2}\int_{\gamma-i\infty}^{\gamma+i\infty}\exp\left\{\frac{N}{2}G_{\theta}(z)\right\}\frac{dz}{2\pi i},
    \end{align}
    where $\mathcal{C}_N$ is deterministic and does not depend on $\theta$ or $W$.

    By \cite[Lemma~4.1]{BaikLee}, there exists a unique $\gamma_{\theta}>\lambda_1$ satisfying
    \begin{align}\label{saddle point equation} 
        0=G_{\theta}'(\gamma_{\theta})=\theta-\frac1N\sum_{i=1}^N\frac{1}{\gamma_{\theta}-\lambda_i},
    \end{align}
    and
    \begin{align*}
        G_{\theta}(\gamma_{\theta})=\Phi_{\lambda_1}(\theta).
    \end{align*}

    We now use the same decomposition as in the proof of \cite[Proposition~4.4]{DuHuang}: 
    choose the vertical contour in \eqref{BL contour identity} to pass through $\gamma_{\theta}$, 
    subtract and add $G_{\theta}(\gamma_{\theta})$ inside the exponent, and put
    \begin{align*}
        I_{\theta}=\int_{\gamma_{\theta}-i\infty}^{\gamma_{\theta}+i\infty}\exp\left\{\frac{N}{2}\left(G_{\theta}(z)-G_{\theta}(\gamma_{\theta})\right)\right\}\frac{dz}{2\pi i}.
    \end{align*}

    Since $Z_{N,\theta}^{sph}>0$, $\mathcal C_N>0$, and $\theta>0$, \eqref{BL contour identity} implies $I_\theta>0$.

    Therefore
    \begin{align}\label{BL one replica decomposition} 
        F_{N,\theta}^{sph}=\log\mathcal{C}_N+\left(1-\frac{N}{2}\right)\log\theta+\frac{N}{2}\Phi_{\lambda_1}(\theta)+\log I_{\theta}.
    \end{align}

    We take the difference of \eqref{BL one replica decomposition} at $\theta=\beta(1+q),\beta(1-q),\beta$. 
    The constant terms cancel, and
    \begin{align}\label{Delta F from BL}
        \Delta F_N^{sph}(q)&=\left(1-\frac{N}{2}\right)\log(1-q^2)
        +\frac{N}{2}\bigg[\Phi_{\lambda_1}(\beta(1+q))+\Phi_{\lambda_1}(\beta(1-q))-2\Phi_{\lambda_1}(\beta)\bigg]\nonumber\\
        &\quad+\log I_{\beta(1+q)}+\log I_{\beta(1-q)}-2\log I_{\beta}.
    \end{align}

    We abbreviate the empirical and contour three-temperature differences in \eqref{Delta F from BL} by
    \begin{align*}
        \Delta\Phi_{\lambda_1}(q)&=\Phi_{\lambda_1}(\beta(1+q))+\Phi_{\lambda_1}(\beta(1-q))-2\Phi_{\lambda_1}(\beta),\\
        \mathcal{I}_N(q)&=\log I_{\beta(1+q)}+\log I_{\beta(1-q)}-2\log I_{\beta}.
    \end{align*}

    With this notation, \eqref{Delta F from BL} becomes
    \begin{align}\label{Delta F compact decomposition} 
        \Delta F_N^{sph}(q)=\left(1-\frac{N}{2}\right)\log(1-q^2)+\frac{N}{2}\Delta\Phi_{\lambda_1}(q)+\mathcal{I}_N(q).
    \end{align}

    From now on in this proof, put $x_N=(\lambda_1-2)_+$.

    Corresponding to the definition of $\Phi_{\lambda_1}(\theta)$ for the GOE matrix $W$, for every $x\geq2$, write
    \begin{align*}
        \phi_x(\theta)=\inf_{z\geq x}\{\theta z-l_{sc}(z)\}.
    \end{align*}

    Define the corresponding continuous three-temperature difference by
    \begin{align*}
        \Delta\phi_x(q)=\phi_x(\beta(1+q))+\phi_x(\beta(1-q))-2\phi_x(\beta).
    \end{align*}

    Inserting the semicircle infimum differences at $x_N$ and at $0$ into \eqref{Delta F compact decomposition}, we obtain
    \begin{align}\label{Delta F full decomposition}
        \Delta F_N^{sph}(q)&=\left(1-\frac{N}{2}\right)\log(1-q^2)+\frac{N}{2}\left(\Delta\Phi_{\lambda_1}(q)-\Delta\phi_{2+x_N}(q)\right)\\
        &\quad+\frac{N}{2}\left(\Delta\phi_{2+x_N}(q)-\Delta\phi_2(q)\right)+\frac{N}{2}\Delta\phi_2(q)+\mathcal{I}_N(q).\nonumber
    \end{align}

    Given \eqref{Delta F full decomposition}, we first evaluate the deterministic semicircle contribution.
    For $z\geq2$, the semicircle logarithmic potential has the explicit expression
    \begin{align*}
        l_{sc}(z)=\frac{z^2-z\sqrt{z^2-4}}{4}+\log\left(\frac{z+\sqrt{z^2-4}}{2}\right)-\frac12.
    \end{align*}

    Indeed, differentiating the right-hand side gives the Stieltjes transform of the semicircle law, 
    and both sides are asymptotic to $\log z$ as $z\to\infty$. 
    Consequently,
    \begin{align}\label{explicit expression of m_sc}
        \frac{d}{dz}\{\theta z-l_{sc}(z)\}=\theta-\frac{z-\sqrt{z^2-4}}{2}.
    \end{align}

    The second term on the right decreases from $1$ to $0$ as $z$ runs from $2$ to infinity. 
    Hence the minimizer in the definition of $\phi_2(\theta)$ is $\theta+\theta^{-1}$ for $0<\theta\leq1$, and is $2$ for $\theta\geq1$. 
    Substituting these two minimizers into the preceding explicit formula gives
    \begin{align}\label{semicircle Phi zero formula}
        \phi_2(\theta)=\begin{cases}
                            1+\theta^2/2+\log\theta,&0<\theta\leq1,\\[1mm]
                            2\theta-1/2,&\theta\geq1.
                       \end{cases}
    \end{align}

    In the present range,
    \begin{align*}
        \beta(1-q)=1-\delta,\qquad\beta(1+q)=2\beta-1+\delta.
    \end{align*}

    Thus $\beta(1-q)<1<\beta<\beta(1+q)$, and \eqref{semicircle Phi zero formula} gives
    \begin{align}\label{semicircle deterministic G0}
        \Delta\phi_2(q)&=\phi_2(\beta(1+q))+\phi_2(\beta(1-q))-2\phi_2(\beta)\nonumber\\
        &=\left(2\beta(1+q)-\frac12\right)+\left(1+\frac{(1-\delta)^2}{2}+\log(1-\delta)\right)-2\left(2\beta-\frac12\right)\nonumber\\
        &=\log(1-\delta)+\delta+\frac{\delta^2}{2}.
    \end{align}

    The logarithmic term in \eqref{Delta F full decomposition} is canceled by the spherical overlap density up to a polynomial factor. 
    Indeed, writing $\mathfrak c_N=\Gamma(N/2)/(\sqrt\pi\,\Gamma((N-1)/2))\leq C\sqrt N$, 
    and using $q\leq v_\beta=o(1)$, we have
    \begin{align}\label{rho log theta cancellation} 
        \rho_N(q)\exp\left\{\left(1-\frac{N}{2}\right)\log(1-q^2)\right\}=\mathfrak c_N(1-q^2)^{-1/2}\leq N^C.
    \end{align}

    To estimate the second and third terms in \eqref{Delta F full decomposition}, set
    \begin{align}\label{A and R definitions in main proof} 
        \mathcal{A}_N(q) =\left[\frac{N}{2}\left(\Delta\Phi_{\lambda_1}(q)-\Delta\phi_{2+x_N}(q)\right)\right]_+,
        \quad\mathcal{R}_N(x)=\frac{N}{2}\left(\Delta\phi_{2+x}(q)-\Delta\phi_2(q)\right).
    \end{align}

    Combining \eqref{Delta F full decomposition}, \eqref{semicircle deterministic G0}, and \eqref{rho log theta cancellation},
    we obtain
    \begin{align}\label{Delta F four block upper}
        \rho_N(q)\mathbb{E}\exp\{\Delta F_N^{sph}(q)+C_0\Vert W\Vert_{op}\}
        &\leq N^C\exp\left\{-\frac{N}{2}\left[-\log(1-\delta)-\delta-\frac{\delta^2}{2}\right]\right\}\nonumber\\
        &\quad\times\mathbb{E}\exp\{\mathcal{A}_N(q)+\mathcal{R}_N(x_N)+\mathcal{I}_N(q)+C_0\Vert W\Vert_{op}\}.
    \end{align}

    In \eqref{Delta F four block upper}, 
    the expectation contains four random contributions: the empirical infimum error, 
    the edge response of semicircle distribution at $x_N$, the centered contour factor, and the operator norm. 
    We control them by the following four lemmas.

    {
    In the present regime,
    \begin{align*}
        0<\delta\leq \beta v_\beta-\beta+1=O\left(\sqrt{\beta-1}\right)=o(1).
    \end{align*}
    Thus all constants below are independent of $b_N$ for all sufficiently large $N$.
    }

    \begin{lemma}\label{A upper bound main proof}
        For any $p>0$ and any $\epsilon>0$, there exists $C_{p,\epsilon}>0$ such that, uniformly for
        $u_{\beta}\leq q\leq v_{\beta}$,
        \begin{align*}
            \mathbb{E}e^{p\mathcal{A}_N(q)}\leq N^{C_{p,\epsilon}}\exp\left\{\frac{p\epsilon}{2}N\delta^3\right\}.
        \end{align*}
    \end{lemma}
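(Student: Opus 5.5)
We aim to bound $\mathcal A_N(q)$ pathwise by a quantity depending only on the fluctuation of the empirical log-potential $L_N$ around $l_{sc}$ near the edge, and then to control that quantity's exponential moments through rigidity.

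\emph{Variational comparison.} For each $\theta\in\{\beta(1+q),\beta(1-q),\beta\}$ we compare $\Phi_{\lambda_1}(\theta)=\inf_{z>\lambda_1}\{\theta z-L_N(z)\}$ with $\phi_{2+x_N}(\theta)=\inf_{z\ge 2+x_N}\{\theta z-l_{sc}(z)\}$. Both infima are taken over essentially the same half-line $z>\max(\lambda_1,2)$. Hence
\begin{align*}
|\Phi_{\lambda_1}(\theta)-\phi_{2+x_N}(\theta)|\le \sup_{z\in K_\theta}|L_N(z)-l_{sc}(z)|+(\text{boundary correction}),
\end{align*}
where $K_\theta$ is the set where either minimizer can lie. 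When $\lambda_1<2$ we have $x_N=0$, and the segment $(\lambda_1,2)$ must be handled with care. Here we use that $\theta z-L_N(z)$ is convex and increasing past the saddle. The saddle equation \eqref{saddle point equation} locates $\gamma_\theta$ within $O(\delta^2+N^{-2/3+\epsilon})$ of $2+x_N$ when $\theta\ge 1$, and near $\theta+\theta^{-1}=2+\delta^2+O(\delta^3)$ when $\theta=1-\delta$. The key point is that we should not bound each $\Phi$ separately and lose $N\cdot N^{-1+\epsilon}$ three times with no gain. Instead we bound the \emph{second difference}. For $\theta=\beta,\beta(1+q)\ge1$, the minimizers both sit at the edge $z\approx \max(\lambda_1,2)$, and $\Phi$ is affine in $\theta$ up to a common random constant. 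These linear pieces cancel in $\Delta\Phi$ up to terms of size $q\,|\gamma_\theta-(2+x_N)|$. The only genuinely random input is therefore $L_N(z)-l_{sc}(z)$ evaluated at $z=\gamma_{1-\delta}$, which lies at distance $\asymp\delta^2$ from the edge, and at the edge points.

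\emph{Rigidity input.} We use eigenvalue rigidity for GOE, which will be one of the random-matrix inputs of Section~\ref{sec:random-matrix-input-proofs}: with probability $1-e^{-N^{c}}$, $|\lambda_k-\gamma_k|\le N^{\epsilon}N^{-2/3}k^{-1/3}$. On this event we get
\begin{align*}
N|L_N(z)-l_{sc}(z)|\le N^{\epsilon}\bigl(1+\log N\bigr)
\end{align*}
for $z-2\gtrsim N^{-2/3+\epsilon}$, and $N|L_N'(z)-m_{sc}(z)|\le N^{\epsilon}/\sqrt{z-2}$. Integrating the derivative bound over a window of length $O(\delta^2)$ gives $N\,|\Delta\Phi-\Delta\phi|\le N^{\epsilon}\delta+C\log N$. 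Since $\delta\ge c(\beta-1)\gg N^{-1/3}$, we have $N^{\epsilon'}\delta\le \epsilon N\delta^3+C\log N$ whenever $N\delta^3\ge N^{\epsilon''}$. Otherwise the bound is polynomial in $N$, which is acceptable. This gives $p\mathcal A_N\le \tfrac{p\epsilon}{2}N\delta^3+C\log N$ on the good event.

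\emph{Bad event.} On the complement we use the crude bound $\mathcal A_N\le CN(\|W\|_{op}+1)$. This holds because $\Phi$ and $\phi$ are Lipschitz in $\theta$ and bounded by $C(\|W\|_{op}+1)$, together with the elementary estimate $|L_N(z)|\le \log(z+\|W\|_{op})+|\log(z-\lambda_1)|$, after noting that the minimizers stay at distance $\ge N^{-C}$ from $\lambda_1$ by the saddle equation. Cauchy--Schwarz, Gaussian concentration of $\|W\|_{op}$, and the probability bound $e^{-N^c}$ on the bad event then require an upper tail of the form $P(\|W\|_{op}>3+t)\le e^{-cNt^2}$. This handles the region where $\|W\|_{op}$ is large. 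For moderate $\|W\|_{op}$ on the bad event, however, $e^{pCN}\cdot e^{-N^c}$ is \emph{not} small. So the real plan is a graded rigidity with tail $e^{-cN^{\epsilon}t}$ at deviation scale $tN^{\epsilon}$, which makes $\mathcal A_N$ linear in $t$. The main obstacle is exactly this: obtaining exponential-moment (not merely high-probability) control of $N\sup|L_N-l_{sc}|$ near the edge. I would first try to derive it from the tail bounds for $\lambda_1$ (upper tail $e^{-cNx^{3/2}}$) together with a concentration estimate for the linear statistic $\sum\log(z-\lambda_i)$ for $z$ away from the edge.
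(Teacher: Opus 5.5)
Your proposal has a genuine gap, and it is the one you flag yourself: nothing in it controls the bad event. Two ingredients that would close it are missing.

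The first is a sharp deterministic cap. The map $\theta\mapsto\Phi_{\lambda_1}(\theta)$ is an infimum of affine functions of $\theta$, hence concave. So $\Delta\Phi_{\lambda_1}(q)\le0$: insert a common $z_0$ into the infima at $\beta(1\pm q)$. In the same way $\Delta\phi_{2+x_N}(q)\le0$, and since $\mathcal R_N(x_N)\ge0$ one has $\Delta\phi_{2+x_N}(q)\ge\Delta\phi_2(q)=\log(1-\delta)+\delta+\delta^2/2$. Hence $\mathcal A_N(q)\le-\frac N2\Delta\phi_2(q)\le\frac13N\delta^3$ for every realization. This replaces your $CN(\|W\|_{op}+1)$, which can never be paid for. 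With the cap, the bad event only needs probability at most $N^{C}e^{-sN\delta^3}$ for some $s>p/3$.

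The second ingredient is a large-deviation bound at exactly that scale with an adjustable rate, and fixed-scale rigidity cannot supply it. The quantity $N\delta^3$ runs from order $b_N^3(\log N)^{3/2}$ at $q\approx u_\beta$ up to order $N(\beta-1)^{3/2}$ at $q\approx v_\beta$. At the bottom of this range an $N^{\epsilon}$ rigidity loss already exceeds $\epsilon N\delta^3$, so your good-event bound fails there. At the top, a failure probability $e^{-N^{c}}$ with fixed small $c$ does not beat $e^{pN\delta^3/3}$.

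The paper makes the good event $\delta$-dependent: $\mathcal E_\epsilon=\{\mathcal D_N\le\frac14N(\frac\epsilon2\delta^2)^{3/2}\}$, where $\mathcal D_N$ is the sup-discrepancy of the eigenvalue counting function. It then proves $\mathbb Ee^{s\mathcal D_N}\le N^{C_s}$ for \emph{every} $s$. The tools are the Bernoulli decomposition of GUE counts, Gustavsson's $O(\log N)$ variance, the G\"otze--Tikhomirov mean bound, and the Forrester--Rains GOE/GUE superposition. Markov's inequality gives $\mathbb P(\mathcal E_\epsilon^c)\le N^{C_s}\exp\{-\frac s4(\frac\epsilon2)^{3/2}N\delta^3\}$, which beats the cap once $s$ is large. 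This is the exponential-moment control you ask for, but for the counting function rather than $N\sup|L_N-l_{sc}|$. That suffices because every comparison reduces to $\mathcal D_N$ by integration by parts.

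The good-event step also needs repair. Your bound $N|L_N'(z)-m_{sc}(z)|\le N^{\epsilon}/\sqrt{z-2}$ has the wrong power. The natural estimate is $\mathcal D_N/(z-2-x_N)$, and $\sum_i(z-\lambda_i)^{-1}$ really does fluctuate on the scale $(z-2)^{-1}$. Integrating out to distance $\asymp\delta^2$ therefore produces $\mathcal D_N$ times a logarithm that grows as the lower endpoint approaches the edge, not $N^{\epsilon}\delta$. To get an $N$-independent factor you must stay a distance $\asymp\epsilon\delta^2$ from the edge.

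The paper does this by imposing $z\ge a=2+x_N+\frac\epsilon2\delta^2$:
\begin{itemize}
\item Convexity in $\theta$ of $d_a=\Phi_a-\Phi_{\lambda_1}$ gives $\Delta\Phi_{\lambda_1}\le\Delta\Phi_a$.
\item On $\mathcal E_\epsilon$ one has $L_N'(a)<1$. So for $\theta\in\{\beta,\beta(1+q)\}$ both the empirical and the semicircle infima sit at $a$.
\item The difference $\Delta\Phi_a-\Delta\phi_a$ is then bounded by $(L_N-l_{sc})(a)-(L_N-l_{sc})(z_-)$, which is at most $\mathcal D_N\log(4/\epsilon)/N$.
\item Moving the constraint from $2+x_N$ to $a$ costs at most $\frac\epsilon4N\delta^3$, by the explicit formula for $\mathcal R_N$.
\end{itemize}
This also avoids analysing the saddles $\gamma_\theta$ within $O(1/(N(\theta-1)))$ of $\lambda_1$, which your sketch relies on but does not carry out.
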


    \begin{lemma}\label{R upper bound main proof}
        There exists $c_0>0$ such that, for any fixed $r>1$, there exists $C_r>0$ satisfying
        \begin{align*}
            \mathbb{E}e^{r\mathcal{R}_N(x_N)}\leq N^{C_r}\exp\left\{N\delta^3\left[\frac{r^3}{6(r+3c_0)^2}+o(1)\right]\right\}.
        \end{align*}

        Here $o(1)$ is uniform for $u_{\beta}\leq q\leq v_{\beta}$.
    \end{lemma}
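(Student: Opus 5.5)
The plan is to compute $\mathcal R_N(x)$ explicitly from the semicircle formulas, show that it is essentially the cubic profile $\frac N2\bigl(\delta x-\frac23x^{3/2}\bigr)$ capped at its maximum, and then integrate $e^{r\mathcal R_N(x_N)}$ against a GOE upper-tail bound for $\lambda_1$. The constant $c_0$ will be the rate in that tail bound,
\begin{align*}
\mathbb P(\lambda_1-2\geq x)\leq C\exp\{-c_0Nx^{3/2}\},\qquad 0\leq x\leq 1.
\end{align*}
This is the standard GOE right-edge large-deviation estimate, presumably one of the random-matrix inputs proved in Section~\ref{sec:random-matrix-input-proofs}.

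\emph{Step 1: explicit form of $\mathcal R_N$.} For $x\geq0$ and $\theta\geq1$, \eqref{explicit expression of m_sc} shows that $z\mapsto\theta z-l_{sc}(z)$ is increasing on $[2,\infty)$. Hence $\phi_{2+x}(\theta)-\phi_2(\theta)=\theta x-(l_{sc}(2+x)-l_{sc}(2))$. This applies to $\theta=\beta(1+q)$ and to $\theta=\beta$.

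For $\theta=1-\delta<1$ the unconstrained minimizer is $\theta+\theta^{-1}=2+x_\delta$, where $x_\delta=\delta^2/(1-\delta)=\delta^2(1+O(\delta))$. So $\phi_{2+x}(1-\delta)=\phi_2(1-\delta)$ for $x\leq x_\delta$. For $x>x_\delta$ the constrained minimizer is $2+x$.

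Since $\beta(1+q)-2\beta=\delta-1$, for $x\leq x_\delta$ this gives
\begin{align*}
\mathcal R_N(x)=\frac N2\bigl[(\delta-1)x+l_{sc}(2+x)-l_{sc}(2)\bigr].
\end{align*}
For $x>x_\delta$ the $x$-derivative of $\mathcal R_N$ vanishes identically, so $\mathcal R_N(x)=\mathcal R_N(x_\delta)$ there. Using $m_{sc}(2+x)=1-\sqrt x+O(x)$ gives $l_{sc}(2+x)-l_{sc}(2)=x-\frac23x^{3/2}+O(x^2)$. Consequently
\begin{align*}
\mathcal R_N(x)=\frac N2\Bigl[\delta x-\frac23x^{3/2}+O(x^2)\Bigr],\qquad 0\leq x\leq x_\delta,
\end{align*}
and $\mathcal R_N(x)\leq\frac{N\delta^3}{6}(1+O(\delta))$ for all $x\geq0$. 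The $O(x^2)=O(\delta^4)$ error contributes $N\delta^3\cdot O(\delta)$, which is $o(N\delta^3)$ because $\delta=O(\sqrt{\beta-1})=o(1)$ uniformly in the range $u_\beta\leq q\leq v_\beta$.

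\emph{Step 2: integration against the tail.} I would write $\mathbb E e^{r\mathcal R_N(x_N)}$ by the layer-cake formula, or discretize $x$ on a grid of mesh $N^{-1}$ in $[0,x_\delta]$, which costs only a polynomial factor. On $\{x_N\leq x_\delta\}$, Step 1 and the tail bound give
\begin{align*}
\mathbb E e^{r\mathcal R_N(x_N)}\leq N^{C_r}\sup_{0\leq x\leq x_\delta}\exp\Bigl\{N\Bigl[r\Bigl(\frac{\delta x}2-\frac{x^{3/2}}3\Bigr)-c_0x^{3/2}\Bigr]+o(N\delta^3)\Bigr\}.
\end{align*}
On $\{x_N>x_\delta\}$, the value of $\mathcal R_N$ is frozen at its cap. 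That event has probability at most $e^{-c_0Nx_\delta^{3/2}}$, so it is bounded by the same supremum evaluated at $x=x_\delta$.

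Substituting $x=s\delta^2$ with $0\leq s\leq1+O(\delta)$, the exponent becomes $N\delta^3\bigl[\frac{rs}2-(\frac r3+c_0)s^{3/2}\bigr]$. The maximizer satisfies $\sqrt s=r/(r+3c_0)<1$, so it is interior, and the maximal value is $\frac{rs}6=\frac{r^3}{6(r+3c_0)^2}$, exactly the claimed constant.

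\emph{Main obstacle.} The delicate point is the uniform tail input: the bound $C e^{-c_0Nx^{3/2}}$ must hold down to $x$ of order $\delta^2\gtrsim(\beta-1)^2\gg N^{-2/3}$, with a constant $c_0$ independent of $N$ and $b_N$. I would take it from the Ledoux--Rider type right-tail estimate for GOE. A second point is keeping all Taylor errors in the semicircle expansions, and in $x_\delta$ versus $\delta^2$, relative to $N\delta^3$; this is what produces the uniform $o(1)$.
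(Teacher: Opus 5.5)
Your proposal is correct and follows essentially the same route as the paper. Both arguments use the explicit piecewise formula $\mathcal R_N(x)=N[\delta x/2-B(x)]$, capped for $x\geq\delta^2/(1-\delta)$, with $c_0$ taken from the Ledoux--Rider right-tail bound, layer-cake integration against that tail, and the substitution $x=\delta^2y^2$ (your $s=y^2$), which leads to the same cubic maximization and the constant $r^3/(6(r+3c_0)^2)$. The differences are cosmetic: you control the potential by a Taylor expansion with an $O(x^2)=O(\delta^4)$ remainder instead of the paper's explicit bound $B(x)\geq x^{3/2}/3-x^2/8$, and you handle the capped event $\{x_N>x_\delta\}$ separately rather than letting $\mathcal R_N'=0$ absorb it inside the layer-cake integral.
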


    \begin{lemma}\label{density upper bound main proof}
        For any $p>0$, there exists $C_p>0$ such that, uniformly for $u_{\beta}\leq q\leq v_{\beta}$,
        \begin{align*}
            \mathbb{E}\exp\{p\mathcal{I}_N(q)\}\leq N^{C_p}.
        \end{align*}
    \end{lemma}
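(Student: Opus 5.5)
The plan is to apply H\"older's inequality to
\[
\mathcal I_N(q)=\log I_{\beta(1+q)}+\log I_{\beta(1-q)}-2\log I_\beta .
\]
This reduces the lemma to two one-replica estimates, each uniform over $\theta\in\{\beta(1-q),\beta,\beta(1+q)\}$ in the range $u_\beta\le q\le v_\beta$, where all three values of $\theta$ lie in a fixed compact subset of $(0,\infty)$:
\[
\text{(a)}\quad \mathbb E\, I_\theta^{\,s}\le N^{C_s},\qquad\qquad \text{(b)}\quad \mathbb E\, I_\beta^{-s}\le N^{C_s},\qquad s>0 .
\]
Since $\mathbb E e^{p\mathcal I_N}=\mathbb E[I_{\beta(1+q)}^pI_{\beta(1-q)}^pI_\beta^{-2p}]$, three-fold H\"older combines (a) and (b) into the claim.

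\emph{Step 1: the upper bound (a).} On the line $z=\gamma_\theta+it$ the modulus of the integrand is
\[
\Bigl|\exp\Bigl\{\tfrac N2\bigl(G_\theta(z)-G_\theta(\gamma_\theta)\bigr)\Bigr\}\Bigr|=\prod_{i=1}^N\Bigl(1+\frac{t^2}{(\gamma_\theta-\lambda_i)^2}\Bigr)^{-1/4}.
\]
The saddle-point equation \eqref{saddle point equation} gives $\gamma_\theta-\lambda_i\le \gamma_\theta-\lambda_N$. It also gives $\gamma_\theta-\lambda_N\le \theta^{-1}+2\|W\|_{op}$, since the average $\frac1N\sum_i(\gamma_\theta-\lambda_i)^{-1}$ is at least $(\gamma_\theta-\lambda_N)^{-1}$. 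Hence every factor satisfies $(1+t^2/D^2)^{-1/4}$ with $D=C(1+\|W\|_{op})$, and
\[
I_\theta\le\int_{\mathbb R}(1+t^2/D^2)^{-N/4}dt\le CD .
\]
Gaussian concentration of $\|W\|_{op}$ then gives (a), in fact with an $O(1)$ bound.

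\emph{Step 2: a probabilistic representation of $I_\theta$.} For (b) I would not work with the oscillating integrand directly. Instead I would use
\[
(z-\lambda)^{-1/2}=\pi^{-1/2}\int_0^\infty s^{-1/2}e^{-(z-\lambda)s}\,ds ,
\]
which expresses $\prod_i(z-\lambda_i)^{-1/2}$ as a Laplace transform. Inverting it along the contour through $\gamma_\theta$ shows that $I_\theta$ equals an explicit factor times $f_\gamma(N\theta/2)$. Here $f_\gamma$ is the density of $S=\sum_i a_i g_i^2$, with $g_i$ i.i.d.\ standard Gaussians and $a_i=1/(2(\gamma_\theta-\lambda_i))$. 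The explicit factor combines $e^{N\theta\gamma/2}\prod_i(\gamma-\lambda_i)^{-1/2}$ from the tilt with $\prod_i(\gamma-\lambda_i)^{1/2}e^{-N\theta\gamma/2}$ from the recentring by $G_\theta(\gamma_\theta)$, so it is deterministic and at most polynomial in $N$. By \eqref{saddle point equation}, $\mathbb ES=\sum_i a_i=N\theta/2=:m$. Thus $I_\theta$ is, up to a factor $N^{\pm C}$, the density of a positive weighted $\chi^2$ sum evaluated at its own mean. I would verify the constant by checking it against \eqref{BL contour identity} with $\Lambda=0$.

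\emph{Step 3: the negative moment (b), which I expect to be the main obstacle.} We need $f_\gamma(m)\ge N^{-C}e^{-C\|W\|_{op}}$ with high enough probability. The difficulty is that in the supercritical regime $\gamma_\beta-\lambda_1$ may be of order $N^{-1}$, so $a_1$ can carry a macroscopic fraction of $m$. No CLT lower bound applies in that case. I would split on the largest weight.

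If $a_1\ge m/4$, condition on $T=\sum_{i\ge2}a_ig_i^2$ and write $f_\gamma(m)=\mathbb E\,h_1(m-T)$, where
\[
h_1(y)=(2\pi a_1y)^{-1/2}e^{-y/(2a_1)}\ge (Cm)^{-1}\quad\text{for } y\in[m/8,\,m-\mathbb ET+m/8].
\]
Chebyshev's inequality, with $\operatorname{Var}T\le 2a_2\,\mathbb ET$ and $a_2\le\frac{C}{N}\cdot N$ controlled by eigenvalue rigidity, gives $\mathbb P(m-T\in\text{that window})\ge c$. This yields $f_\gamma(m)\ge c/m\ge N^{-1}$.

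If $a_1<m/4$, then $m=\sum_i a_i$ forces at least two comparable blocks of the weights. The sum is then a convolution of two densities each bounded below on an interval of length comparable to its mean, and the same window argument gives a $N^{-C}$ lower bound.

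On the exceptional event where rigidity or operator-norm bounds fail, I would fall back on a crude bound. Restricting $Z^{sph}_{N,\beta}$ to a cap around the top eigenvector gives $I_\beta\ge e^{-CN(1+\|W\|_{op})}$. This event has probability $e^{-N^{c}}$, which more than compensates.
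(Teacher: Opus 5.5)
Steps 1 and 2 are sound. Bounding the modulus of the integrand by $(1+t^2/D^2)^{-N/4}$ is a valid alternative to the paper's upper bound through the two largest weighted $\chi^2$ terms. Step 2 is exactly the paper's identity $I_\theta=2f_\theta(N\theta)$, and in your normalization the prefactor is exactly $1$. There is one slip: $\gamma_\theta-\lambda_1\le\theta^{-1}$ holds because the average $\theta=\frac1N\sum_i(\gamma_\theta-\lambda_i)^{-1}$ is at most its largest term $(\gamma_\theta-\lambda_1)^{-1}$; the inequality you quote points the other way.

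The genuine gap is Step 3. Your scheme is a lower bound on a good event plus the fallback $I_\beta\ge e^{-CN(1+\|W\|_{op})}$ on the complement $E$. This requires $\mathbb E[e^{sCN(1+\|W\|_{op})}\mathbf 1_E]\le N^{C_s}$ for every $s>0$, hence $\mathbb P(E)\le e^{-C_sN}$ with $C_s\to\infty$. A probability $e^{-N^c}$ does not compensate $e^{sCN}$. Only events like $\{\|W\|_{op}\ge K_s\}$ are that rare, so the lower bound must hold for essentially every eigenvalue configuration of bounded norm. Your case analysis does not deliver this:
\begin{itemize}
\item When $a_1\ge m/4$, Chebyshev needs $\operatorname{Var}T\le 2a_2\,\mathbb ET$ to be a small multiple of $m^2$, i.e.\ $a_2\le cm$, i.e.\ $\gamma_\theta-\lambda_2\gtrsim 1/N$. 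Nothing forces this. Rigidity controls eigenvalue locations, not gaps, so configurations with $\lambda_1-\lambda_2\lesssim 1/N$ are not overwhelmingly rare.
\item When $a_1<m/4$, you claim each block has a density bounded below on an interval of length comparable to its mean. This is false for blocks of many small weights. If all $a_i=\theta/2$, a half-block has mean $\asymp N$, but its density is of order $N^{-1/2}$ only within $O(\sqrt N)$ of the mean and is exponentially small at distance $\asymp N$. So the window argument does not transfer.
\end{itemize}

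The missing idea is that the lower bound is deterministic in the weights. Use the paper's normalization $w_i=(\gamma_\theta-\lambda_i)^{-1}=2a_i$, and let $h_1$ be the density of $w_1\xi_1^2$. Set $V=\sum_{i\ge2}w_i(\xi_i^2-1)$, $\sigma^2=\operatorname{Var}V$ and $r=w_1/\sigma$. Then $f_\theta(N\theta)=\mathbb E[h_1(w_1-V)\mathbf 1_{\{V<w_1\}}]$. The right dichotomy compares the largest weight with the standard deviation of the rest, not with the mean.
\begin{itemize}
\item If $r\le r_0$, Berry--Esseen with error $Cr$ gives $\mathbb P(-Kw_1\le V\le w_1/2)\ge cr$.
\item If $r>r_0$, Chebyshev and Cantelli give $\mathbb P(-K\sigma\le V\le w_1/2)\ge c$, with no control of $w_2$ needed.
\end{itemize}
On these events $h_1(w_1-V)\ge c/w_1$. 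Hence $f_\theta(N\theta)\ge c/\max\{w_1,\sigma\}\ge c/(N\theta)$ for arbitrary positive weights with $\sum_iw_i=N\theta$. Thus $I_\theta\ge c/N$ surely, and $I_{\beta(1+q)}I_{\beta(1-q)}/I_\beta^2\le CN^2(1+\|W\|_{op})^2$ pointwise. After that only Lemma~\ref{op upper bound main proof} is needed, with no rigidity, gap estimates, or exceptional events.
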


    \begin{lemma}\label{op upper bound main proof}
        For any $p>0$, there exists $C_p>0$ such that
        \begin{align*}
            \sup_{N\geq2}\mathbb{E}e^{p\Vert W\Vert_{op}}\leq C_p.
        \end{align*}
    \end{lemma}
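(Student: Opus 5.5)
The statement to prove is the last one displayed, Lemma~\ref{op upper bound main proof}: $\sup_{N}\mathbb{E}e^{p\Vert W\Vert_{op}}\leq C_p$ for $W\sim\mathrm{GOE}(N)$. The plan is to use Gaussian concentration of the operator norm around a mean that is bounded uniformly in $N$.

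\textbf{Step 1: Lipschitz property.} The coordinates $\widehat\xi_{\mathbf{i}}$ introduced in the proof of Proposition~\ref{four moment comparison} are i.i.d.\ standard Gaussians. We regard $\Vert W\Vert_{op}$ as a function of these coordinates. For symmetric $W,W'$ we have
\[
\bigl|\Vert W\Vert_{op}-\Vert W'\Vert_{op}\bigr|\le \Vert W-W'\Vert_{F}.
\]
We also have $\Vert W-W'\Vert_F^2=\frac{2}{N}\sum_{\mathbf{i}}(\widehat\xi_{\mathbf{i}}-\widehat\xi'_{\mathbf{i}})^2$, since off-diagonal entries appear twice and diagonal entries carry the factor $\sqrt2$. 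Hence $\Vert W\Vert_{op}$ is $\sqrt{2/N}$-Lipschitz in the Gaussian vector. By Gaussian concentration (Tsirelson--Ibragimov--Sudakov), for every $t\ge0$,
\[
\mathbb{P}\bigl(\Vert W\Vert_{op}-\mathbb{E}\Vert W\Vert_{op}\ge t\bigr)\le e^{-Nt^2/4}.
\]
Equivalently, this gives the sub-Gaussian bound $\mathbb{E}e^{p(\Vert W\Vert_{op}-\mathbb{E}\Vert W\Vert_{op})}\le e^{p^2/N}$.

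\textbf{Step 2: uniform bound on the mean.} We need $\sup_N\mathbb{E}\Vert W\Vert_{op}\le C$. I would obtain this by a standard argument, such as the following.

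\emph{Net argument.} Take a $1/4$-net $\mathcal{N}$ of the unit sphere with $|\mathcal{N}|\le 9^N$. Then $\Vert W\Vert_{op}\le 2\max_{x\in\mathcal{N}}|x^TWx|$. Each $x^TWx$ is centered Gaussian with variance $2/N$. A union bound gives
\[
\mathbb{P}(\Vert W\Vert_{op}>t)\le 2\cdot 9^N e^{-Nt^2/16}.
\]
Integrating in $t$ yields $\mathbb{E}\Vert W\Vert_{op}\le C$ with $C$ absolute.

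\emph{Alternative.} Use the Gordon/Slepian comparison $\mathbb{E}\lambda_1\le 2$, together with symmetry of the spectrum.

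\textbf{Step 3: conclusion.} Combining the two steps,
\[
\mathbb{E}e^{p\Vert W\Vert_{op}}\le e^{pC}e^{p^2/N}\le e^{pC+p^2/2}=:C_p
\]
for all $N\ge2$.

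In fact, the net tail bound from Step 2 already suffices on its own. Integrating $\int pe^{pt}\mathbb{P}(\Vert W\Vert_{op}>t)\,dt$, split at $t=t_0$ with $t_0^2/16>\log 9+1$, gives a bound uniform in $N$. So the only mildly delicate point is tracking the variance normalization of the diagonal entries. No genuine obstacle is expected.
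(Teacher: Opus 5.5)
Your argument is correct. The paper follows the same overall strategy: it establishes a sub-Gaussian tail for $\Vert W\Vert_{op}$ at scale $N^{-1/2}$ above an $O(1)$ level, then integrates $pe^{pt}\,\mathbb{P}(\Vert W\Vert_{op}\geq t)$. The difference lies in how the tail is obtained. The paper writes $W=(G+G^T)/\sqrt{2N}$ with $G$ a square matrix of i.i.d.\ standard Gaussians and uses $\Vert W\Vert_{op}\leq\sqrt{2/N}\,\Vert G\Vert_{op}$. It then cites the Davidson--Szarek bound $\mathbb{P}(\Vert G\Vert_{op}\geq 2\sqrt N+t)\leq e^{-t^2/2}$ to get $\mathbb{P}(\Vert W\Vert_{op}\geq 3+t)\leq Ce^{-cNt^2}$. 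The Davidson--Szarek bound is itself Gordon's comparison bound on the mean combined with Gaussian concentration. So your Step~1 together with the Sudakov--Fernique alternative in Step~2 is the same mechanism, applied directly to the symmetric matrix rather than through the nonsymmetric $G$. Your net argument avoids comparison inequalities altogether, at the cost of a worse threshold, which is irrelevant for this lemma. The paper's route is a one-line citation with a sharp threshold ($2\sqrt2<3$). Yours is self-contained, and your bookkeeping of the diagonal normalization is right: the Lipschitz constant is $\sqrt{2/N}$ and $\operatorname{Var}(x^TWx)=2/N$ for unit $x$.
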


    The four lemmas above are proved in Section~\ref{sec:remaining-auxiliary-lemmas}; the random-matrix input lemmas used in those proofs are proved in Section~\ref{sec:random-matrix-input-proofs}.

    {
    For all sufficiently large $N$,
    \begin{align}\label{deterministic barrier in lemma proof} 
        -\log(1-\delta)-\delta-\frac{\delta^2}{2}\geq\frac{\delta^3}{3}.
    \end{align}
    }

    By H\"older inequality,
    \begin{equation}
        \begin{aligned}\label{four random factors in lemma proof}
            \mathbb{E}\exp\{\mathcal{A}_N(q)+\mathcal{R}_N(x_N)+\mathcal{I}_N(q)+C_0\Vert W\Vert_{op}\}
            &\leq\left(\mathbb{E}e^{4\mathcal{A}_N(q)}\mathbb{E}e^{4\mathcal{R}_N(x_N)}\mathbb{E}e^{4\mathcal{I}_N(q)}\mathbb{E}e^{4C_0\Vert W\Vert_{op}}\right)^{1/4}\\
            &\leq N^C\exp\left\{\left(\frac{\epsilon}{2}+\frac{16}{6(4+3c_0)^2}+o(1)\right)N\delta^3\right\}.
        \end{aligned}
    \end{equation}

    We choose $\epsilon>0$ so small that
    \begin{align*}
        \frac{1}{6}-\frac{\epsilon}{2}-\frac{16}{6(4+3c_0)^2}>0.
    \end{align*}

    Combining \eqref{Delta F four block upper}, \eqref{deterministic barrier in lemma proof} and
    \eqref{four random factors in lemma proof} proves \eqref{target Delta F op bound}.
\end{proof}

\section{Proofs of the remaining auxiliary lemmas}\label{sec:remaining-auxiliary-lemmas}

\begin{proof}[Proof of Lemma~\ref{K beta derivative bound}]
    Write $(A,B)_F=\operatorname{Tr}(A^TB)$ and $\|A\|_F=(A,A)_F^{1/2}$ for the Frobenius inner product and norm on the space of real symmetric matrices.  In the matrix integrals below, $dM$ denotes Lebesgue measure on this space.  For a symmetric matrix $M$, write $Z_{N,\beta}^{sph}(M)$ for the spherical partition function obtained by replacing $W$ with $M$. 
    For the fixed pair $x,y\in S_N$ with $x^Ty=Nq$, put
    \begin{align*}
        A_{\beta,x,y}=\frac{\beta}{N}\left(xx^T+yy^T\right).
    \end{align*}

    Then
    \begin{align*}
        \beta H_N(x)+\beta H_N(y)=\frac{N}{2}(W,A_{\beta,x,y})_F,
    \end{align*}
    and
    \begin{align*}
        \left\Vert A_{\beta,x,y}\right\Vert_F^2&=\frac{\beta^2}{N^2}\operatorname{Tr}\left(xx^T+yy^T\right)^2\\
        &=\frac{\beta^2}{N^2}\left(\left\Vert x\right\Vert^4+\left\Vert y\right\Vert^4+2(x^Ty)^2\right)\\
        &=2\beta^2(1+q^2).
    \end{align*}

    By the Cameron--Martin identity for the GOE, we therefore have
    \begin{align*}
        J_{\beta}(q)
        &=\exp\left\{\frac{N}{4}\left\Vert A_{\beta,x,y}\right\Vert_F^2\right\}\mathbb{E}\left[Z_{N,\beta}^{sph}\left(W+A_{\beta,x,y}\right)^{-2}\right]\\
        &=e^{N\beta^2(1+q^2)/2}K_{\beta}(q),
    \end{align*}
    where
    \begin{align*}
        K_{\beta}(q)=\mathbb{E}\left[Z_{N,\beta}^{sph}\left(W+A_{\beta,x,y}\right)^{-2}\right].
    \end{align*}

    The two nonzero eigenvalues of $A_{\beta,x,y}$ are $\beta(1+q)$ and $\beta(1-q)$, with corresponding orthonormal eigenvectors
    \begin{align*}
        e_1=\frac{x+y}{\sqrt{2N(1+q)}},\qquad e_2=\frac{x-y}{\sqrt{2N(1-q)}}.
    \end{align*}

    Extend $e_1,e_2$ to an orthonormal basis of $\mathbb{R}^N$, and use this basis as the coordinate basis. 
    The spectral decomposition of $A_{\beta,x,y}$ is then
    \begin{align*}
        A_{\beta,x,y} =\beta(1+q)e_1e_1^T+\beta(1-q)e_2e_2^T.
    \end{align*}

    In this basis, $W$ still has the GOE law by orthogonal invariance, 
    while the spherical partition function is unchanged under the same change of basis. 
    Consequently,
    \begin{align}\label{representation of K}
        K_{\beta}(q)=\mathbb{E}\left[Z_{N,\beta}^{sph}\left(W+\beta(1+q)e_1e_1^T+\beta(1-q)e_2e_2^T\right)^{-2}\right].
    \end{align}

    In \eqref{representation of K}, exchange coordinates $e_1$ and $e_2$ shows that $K_{\beta}$ is even function.
    Let
    \begin{align*}
        S=e_1e_1^T+e_2e_2^T,\quad D=e_1e_1^T-e_2e_2^T.
    \end{align*}

    Let $M=W+\beta(1+q)e_1e_1^T+\beta(1-q)e_2e_2^T$, $K_{\beta}$ can be written as
    \begin{align*}
        K_{\beta}(q)=\int Z_{N,\beta}^{sph}(M)^{-2}\exp\left\{-\frac{N}{4}\left\Vert M-\beta S-\beta qD\right\Vert_F^2\right\}dM
    \end{align*}
    up to a normalizing constant independent of $q$.    

    For symmetric matrices $M_0,M_1$ and $0\leq t\leq1$, H\"older's inequality gives
    \begin{align*}
        Z_{N,\beta}^{sph}\left((1-t)M_0+tM_1\right)
        &=\int_{S_N}e^{(1-t)\beta H_{M_0}(z)}e^{t\beta H_{M_1}(z)}d\nu_N(z)\\
        &\leq Z_{N,\beta}^{sph}(M_0)^{1-t}Z_{N,\beta}^{sph}(M_1)^t.
    \end{align*}
    Thus $Z_{N,\beta}^{sph}(M)^{-2}$ is log-concave in $M$, while the shifted Gaussian factor is jointly log-concave in $(M,q)$. 
    Their product is jointly log-concave; 
    the Pr\'ekopa--Leindler theorem therefore shows that $f_{\beta}=\log K_{\beta}$ is concave, so $f_{\beta}''\leq0$.
    Direct differentiation of the Gaussian factor gives
    \begin{align*}
        f_{\beta}''(q)=-N\beta^2+\frac{N^2\beta^2}{4}\operatorname{Var}_q\left((M-\beta S-\beta qD,D)_F\right)\geq-N\beta^2.
    \end{align*}

    Since $f_{\beta}$ is even,
    \begin{align*}
        -N\beta^2q\leq f_{\beta}'(q)\leq0,\qquad q\geq0.
    \end{align*}

    Consequently, for $q\geq0$,
    \begin{align*}
        0\leq\frac{d}{dq}\log J_{\beta}(q)=N\beta^2q+f_{\beta}'(q)\leq N\beta^2q.
    \end{align*}

    Since $J_{\beta}$ is even, this proves \eqref{J local derivative bound}. 
    Moreover, $K_{\beta}(q)\leq K_{\beta}(0)$ for $q\geq0$. The identity
    \begin{align*}
        \int_{-1}^{1}\rho_N(q)J_{\beta}(q)dq=1
    \end{align*}
    and the monotonicity in $\left|q\right|$ imply $J_{\beta}(0)\leq1$. 
    Therefore, for $q\geq0$,
    \begin{align*}
        J_{\beta}(q)=e^{N\beta^2q^2/2}e^{N\beta^2/2}K_{\beta}(q)\leq e^{N\beta^2q^2/2}e^{N\beta^2/2}K_{\beta}(0)
        =e^{N\beta^2q^2/2}J_{\beta}(0)\leq e^{N\beta^2q^2/2}.
    \end{align*}

    The case $q\leq0$ follows by symmetry.  The endpoint values in \eqref{J rough upper bound} follow by continuity of the defining expression for $J_{\beta}(q)$ as $q\to\pm1$.
\end{proof}

\begin{remark}
    The differentiations in the preceding proof can be passed through the integral defining $K_{\beta}(q)$.
    Indeed, for every symmetric matrix $M$,
    \begin{align*}
        Z_{N,\beta}^{sph}(M)\geq\exp\left\{-\frac{\beta N}{2}\Vert M\Vert_{op}\right\},
    \end{align*}
    and hence
    \begin{align*}
        Z_{N,\beta}^{sph}(M)^{-2}\leq e^{\beta N\Vert M\Vert_{op}}\leq e^{\beta N\Vert M\Vert_F}.
    \end{align*}

    In the representation of $K_{\beta}(q)$ above, the shifted GOE density is proportional to
    \begin{align*}
        \exp\left\{-\frac{N}{4}\left\Vert M-\beta S-\beta qD\right\Vert_F^2 \right\}.
    \end{align*}

    On every compact subinterval of $(-1,1)$, its first two $q$-derivatives are bounded by a polynomial in $\Vert M\Vert_F$ times this density. 
    The preceding bound on $Z_{N,\beta}^{sph}(M)^{-2}$ is therefore dominated by an integrable Gaussian function of $M$. 
    Thus dominated convergence justifies the differentiation under the integral sign.
\end{remark}

\begin{lemma}[Uniform counting input]\label{uniform counting input}
    Let $\lambda_1\geq\cdots\geq\lambda_N$ be the eigenvalues of the GOE
    matrix used in this paper, and let
    \begin{align*}
        \mathcal{N}_N(t)=\#\{i:\lambda_i\geq t\}.
    \end{align*}

    Then, for every fixed $s>0$, there exists $C_s>0$ such that
    \begin{align*}
        \mathbb{E}\exp\left\{s\sup_{t\in\mathbb{R}}\left|\mathcal{N}_N(t)-N\int_t^{\infty}\rho_{sc}(x)\,dx\right|\right\}\leq N^{C_s}.
    \end{align*}
\end{lemma}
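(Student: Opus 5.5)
The plan is to derive Lemma~\ref{uniform counting input} from the rigidity of eigenvalues for the GOE, combined with a crude deterministic bound on the rare event where rigidity fails. Write $D_N=\sup_{t}|\mathcal N_N(t)-N\int_t^\infty\rho_{sc}|$. Since $0\leq\mathcal N_N(t)\leq N$ and $0\leq N\int_t^\infty\rho_{sc}\leq N$, we always have $D_N\leq N$. Hence $e^{sD_N}\leq e^{sN}$ deterministically, and it suffices to show
\begin{align*}
    \mathbb P\left(D_N>K\log N\right)\leq e^{-2sN}
\end{align*}
for some constant $K=K_s$. This suffices because then
\begin{align*}
    \mathbb E e^{sD_N}\leq N^{sK}+e^{sN}e^{-2sN}\leq N^{C_s}.
\end{align*}

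The first step is to reduce $D_N$ to eigenvalue locations. Let $\gamma_j$ be the classical locations, defined by $N\int_{\gamma_j}^\infty\rho_{sc}=j-\tfrac12$. If every $\lambda_j$ is close to $\gamma_j$ in the rigidity sense, then $\mathcal N_N(t)$ differs from $N\int_t^\infty\rho_{sc}$ by at most the number of classical locations lying within the rigidity window of $t$, plus one. The rigidity window is $|\lambda_j-\gamma_j|\leq K' \hat j^{-1/3}N^{-2/3}\log N$ with $\hat j=\min(j,N+1-j)$. A direct computation with the square-root edge behaviour of $\rho_{sc}$ shows that the semicircle mass of such a window is $O(N^{-1}\log N)$, uniformly in $t$ (in the bulk and at the edges). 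So on the rigidity event $D_N\leq C\log N$.

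The second step, and the main obstacle, is to get the rigidity event with failure probability $e^{-2sN}$ rather than the usual $N^{-D}$. The $N^{-D}$ rate (from Erd\H{o}s--Yau--Yin) is insufficient because the deterministic bound is $e^{sN}$. Two remedies are available.

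\emph{First remedy.} Allow $K$ to absorb $N$-dependence only through $\log N$ factors, and use the GOE-specific exponential concentration. For fixed $t$, $\mathcal N_N(t)$ is a linear statistic of a determinantal-type (Pfaffian) point process, and in the GUE case it is a sum of independent Bernoullis. For the GOE, the interlacing/decimation relation of Forrester--Rains links the two, which gives Bernstein-type tails
\begin{align*}
    \mathbb P\left(|\mathcal N_N(t)-\mathbb E\mathcal N_N(t)|>x\right)\leq 2e^{-cx^2/(\operatorname{Var}+x)}.
\end{align*}
With $\operatorname{Var}=O(\log N)$ and $x=K\log N$ this is only $N^{-cK}$. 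So instead take $x$ up to order $N$: the tail $e^{-cx}$ handles large $x$ directly. That is, one proves $\mathbb P(D_N>x)\leq N^{C}e^{-cx}$ for all $x\geq K\log N$, which makes $\mathbb E e^{sD_N}$ finite as soon as $c>s$.

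\emph{Second remedy (the one to try first).} Use the GOE large deviation principle for the empirical measure and for extreme eigenvalues. Ben Arous--Guionnet gives speed $N^2$ and extreme-eigenvalue LDPs give speed $N$. These control the deviations of $D_N$ of order $\varepsilon N$, with probability $e^{-cN^2}$ or, for edge escape, $e^{-cN}$. By increasing the constant in the edge LDP via the large-$\|W\|_{op}$ tail (Lemma~\ref{op upper bound main proof}), the cost can be made $\leq e^{-2sN}$. Intermediate deviations $K\log N\leq D_N\leq \varepsilon N$ are then handled by the moderate-deviation rigidity estimates, which decay like $e^{-c D_N}$ (via local law high-moment bounds with moments of order $D_N$).

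Finally, the sup over $t$ is reduced to a grid of $N^{C}$ points using the monotonicity of both $\mathcal N_N$ and the integral. A union bound then costs only a polynomial factor, which is absorbed into $N^{C_s}$.
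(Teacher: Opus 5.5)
Your first remedy is the right route, and its ingredients are the paper's: the Bernoulli decomposition of the GUE count, Forrester--Rains decimation to reach the GOE, an $O(\log N)$ variance bound, and a union bound over $N+1$ semicircle quantiles via monotonicity. As written, however, it does not prove the lemma.

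\begin{itemize}
\item \emph{The rate is fixed.} Bernstein's inequality gives a fixed numerical rate $c$. So ``as soon as $c>s$'' covers only $s$ below that constant, whereas the statement is for every fixed $s>0$. The fix, which is what the paper does, is to use the exact moment generating function of independent Bernoullis instead of a tail bound. For each fixed $u$, $\log\mathbb E e^{u(C_N(t)-\mathbb E C_N(t))}\le c(u)\operatorname{Var}C_N(t)$, which gives $N^{C_u}$ directly once $\sup_t\operatorname{Var}C_N(t)\le C\log N$. This also makes the $e^{sN}$ versus $e^{-2sN}$ bookkeeping unnecessary.
\item \emph{The GOE transfer is only asserted.} The moment generating function is what makes it work. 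From $A(t)+B(t)=2C(t)+\varepsilon_t$, with $A,B$ independent and $\mathbb E e^{u(B-\mathbb E B)}\ge1$ by Jensen, one gets $\mathbb E e^{u(A-\mathbb E A)}\le e^{|u|}\mathbb E e^{2u(C-\mathbb E C)}$. A tail bound for $C$ does not transfer to $A$ this simply.
\item \emph{The centering is missing.} You need $\sup_t|\mathbb E\mathcal N_N(t)-N\mu_{sc}(t)|=O(1)$. The paper takes G\"otze--Tikhomirov for the GUE mean and transfers it by Cauchy interlacing between $W$ and a minor of an independent $\mathrm{GOE}_{N+1}$.
\item \emph{Uniformity of the variance bound up to the edge is missing.} Gustavsson's asymptotics require $N(2-t)^{3/2}\to\infty$. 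Otherwise one must fall back on $\operatorname{Var}C_N(t)\le\mathbb E C_N(t)\le N\mu_{sc}(t)+C$.
\end{itemize}

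The rest of the proposal should be dropped.

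\begin{itemize}
\item \emph{The opening target is unattainable.} A single-threshold count already fluctuates on the scale $\sqrt{\log N}$ with Gaussian-type tails. So $\{D_N>K\log N\}$ has only polynomially small probability for every fixed $K$, and $\mathbb P(D_N>K\log N)\le e^{-2sN}$ cannot hold.
\item \emph{Step 1 is unneeded and rests on a non-standard input.} It invokes rigidity with a bare $\log N$ window, which is not the standard Erd\H{o}s--Yau--Yin statement. Their $N^{\epsilon}$-type losses would only give $e^{sN^{\epsilon}}$.
\item \emph{The second remedy, which you propose to try first, is essentially the lemma restated.} It places the whole difficulty, the range $K\log N\le D_N\le\varepsilon N$, into ``moderate-deviation rigidity estimates decaying like $e^{-cD_N}$'' obtained from local-law moments of order $D_N$. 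The standard local law controls moments of fixed order with $N^{\epsilon}$ losses, so this is not an available input. It would also again come with a fixed rate $c$.
\end{itemize}
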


The proof of Lemma~\ref{uniform counting input} is given in Section~\ref{sec:random-matrix-input-proofs}.

\begin{proof}[Proof of Lemma~\ref{A upper bound main proof}]
    It is enough to consider $0<\epsilon<1$. 
    Fix $u_{\beta}\leq q\leq v_{\beta}$, and set
    \begin{align*}
        a=2+x_N+\frac{\epsilon}{2}\delta^2.
    \end{align*}

    Let
    \begin{align*}
        \mathcal D_N=\sup_{t\in\mathbb R}\left|\#\{i:\lambda_i\ge t\}-N\int_t^\infty\rho_{sc}(x)\,dx\right|.
    \end{align*}

    We work on the event
    \begin{align}\label{good event}
        \mathcal E_\epsilon=\left\{\mathcal D_N\le\frac14N\left(\frac{\epsilon}{2}\delta^2\right)^{3/2}\right\}.
    \end{align}

    Define the constrained empirical variational function
    \begin{align*}
        \Phi_a(\theta)=\inf_{z\ge a}G_\theta(z),
    \end{align*}
    and 
    \begin{align*}
        \Delta\Phi_a(q)=\Phi_a(\beta(1+q))+\Phi_a(\beta(1-q))-2\Phi_a(\beta).
    \end{align*}

    We write $\mathcal A_N(q)$ into the following three-term decomposition:
    \begin{align}\label{A three term decomposition}
        \mathcal A_N(q)&=\left[\frac N2\left(\Delta\Phi_{\lambda_1}(q)-\Delta\phi_{2+x_N}(q)\right)\right]_+\nonumber\\
        &=\left[\frac N2\big[\Delta\Phi_{\lambda_1}(q)-\Delta\Phi_a(q)+\Delta\Phi_a(q)-\Delta\phi_a(q)+\Delta\phi_a(q)-\Delta\phi_{2+x_N}(q)\big]\right]_+.
    \end{align}

    We control these three terms by the following three lemmas.

    \begin{lemma}\label{constrained variational comparison}
        For $a=2+x_N+\frac{\epsilon}{2}\delta^2$,
        \begin{align*}
            \Delta\Phi_{\lambda_1}(q)\le\Delta\Phi_a(q).
        \end{align*}
    \end{lemma}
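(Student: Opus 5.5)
The plan is to show that the correction $D(\theta)=\Phi_a(\theta)-\Phi_{\lambda_1}(\theta)$ is a \emph{convex} function of $\theta>0$. The claim is equivalent to
\begin{align*}
    D(\beta(1+q))+D(\beta(1-q))-2D(\beta)\geq0,
\end{align*}
and since $\beta=\tfrac12[\beta(1+q)+\beta(1-q)]$, this follows from convexity of $D$ by Jensen's inequality. A direct term-by-term comparison cannot work. Because $a=2+x_N+\frac{\epsilon}{2}\delta^2>2+x_N\geq\lambda_1$, the constraint set $\{z\geq a\}$ is contained in $\{z>\lambda_1\}$, so $\Phi_{\lambda_1}\leq\Phi_a$ pointwise. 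This helps the two terms at $\beta(1\pm q)$ but hurts the term $-2\Phi(\beta)$.

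First, I record the structure of $G_\theta(z)=\theta z-L_N(z)$ on $(\lambda_1,\infty)$. Since $L_N''(z)=-N^{-1}\sum_i(z-\lambda_i)^{-2}<0$, the function $G_\theta$ is strictly convex in $z$. It tends to $+\infty$ as $z\downarrow\lambda_1$, because $L_N\to-\infty$ there, and also as $z\to\infty$, because $\theta z$ dominates $\log z$. Hence for every $\theta>0$ there is a unique minimizer $\gamma_\theta>\lambda_1$, namely the solution of \eqref{saddle point equation}. By convexity, the minimizer over $\{z\geq a\}$ is $\max\{\gamma_\theta,a\}$.

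Moreover, $G_\theta'(z)=\theta-m_N(z)$ with $m_N(z)=N^{-1}\sum_i(z-\lambda_i)^{-1}$ strictly decreasing in $z$. The implicit equation $m_N(\gamma_\theta)=\theta$ therefore makes $\theta\mapsto\gamma_\theta$ continuous and strictly decreasing, by the implicit function theorem since $m_N'<0$.

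Next comes the envelope step. $\Phi_{\lambda_1}(\theta)=G_\theta(\gamma_\theta)$ is an infimum of functions that are affine in $\theta$, with a unique smoothly varying minimizer, so $\Phi_{\lambda_1}'(\theta)=\gamma_\theta$. For $\Phi_a$ I split into two regimes at $\theta_a=m_N(a)$:
\begin{itemize}
    \item For $\theta\leq\theta_a$ one has $\gamma_\theta\geq a$, so $\Phi_a(\theta)=\Phi_{\lambda_1}(\theta)$.
    \item For $\theta\geq\theta_a$ one has $\Phi_a(\theta)=\theta a-L_N(a)$, which is affine in $\theta$ with slope $a$.
\end{itemize}
At $\theta=\theta_a$ the two expressions agree, and so do their slopes, since $\gamma_{\theta_a}=a$. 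Hence $D$ is $C^1$ with
\begin{align*}
    D'(\theta)=(a-\gamma_\theta)_+ ,
\end{align*}
which is nondecreasing in $\theta$ because $\gamma_\theta$ is decreasing. A $C^1$ function with nondecreasing derivative is convex. Applying Jensen's inequality at the midpoint $\beta$ of $\beta(1\pm q)$ then gives the lemma.

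The main (mild) obstacle is justifying the envelope derivative and the $C^1$ matching at $\theta_a$. I would avoid differentiability issues entirely by the alternative route of writing
\begin{align*}
    D(\theta)=\int_{0}^{\theta}(a-\gamma_s)_+\,ds,
\end{align*}
using that $D(\theta)=0$ for $\theta\leq\theta_a$. On $[\theta_a,\infty)$ this integral representation follows by integrating the envelope identity for the concave functions $\Phi_{\lambda_1}$ and $\Phi_a$, which are absolutely continuous with a.e.\ derivatives $\gamma_\theta$ and $\max\{\gamma_\theta,a\}$. Convexity of $D$ is immediate from the representation, since its integrand is nondecreasing in $s$.
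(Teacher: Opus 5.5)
Your proposal is correct and follows essentially the same route as the paper. Both arguments show that $\Phi_a-\Phi_{\lambda_1}$ is convex via the derivative $(a-\gamma_\theta)_+$ with $\theta\mapsto\gamma_\theta$ decreasing, and then apply the midpoint inequality at $\beta$; your integral representation $D(\theta)=\int_0^\theta(a-\gamma_s)_+\,ds$ is a slightly cleaner way to handle the transition point than the paper's piecewise second-derivative computation.
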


    \begin{lemma}\label{constrained short increment}
        On $\mathcal E_\epsilon$,
        \begin{align*}
            \frac N2\left(\Delta\Phi_a(q)-\Delta\phi_a(q)\right)\le\frac{\mathcal D_N}{2}\log\frac4\epsilon.
        \end{align*}
    \end{lemma}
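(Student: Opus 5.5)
The plan is to compare $\Phi_a$ and $\phi_a$ at the three temperatures $\theta_+=\beta(1+q)$, $\theta_0=\beta$ and $\theta_-=\beta(1-q)=1-\delta$. At each temperature we plug the minimizer of one variational problem into the other. The resulting error is then expressed through differences of the centred log-potential $D(z)=L_N(z)-l_{sc}(z)$.

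Writing $G^{sc}_\theta(z)=\theta z-l_{sc}(z)$, we have $G_\theta(z)-G^{sc}_\theta(z)=-D(z)$. Let $z_\pm$ be the minimizers of $G^{sc}_{\theta_\pm}$ over $z\ge a$, and let $z_0$ be a minimizer of $G_\beta$ over $z\ge a$. Then
\begin{align*}
\Phi_a(\theta_\pm)-\phi_a(\theta_\pm)\le -D(z_\pm),\qquad
-\bigl(\Phi_a(\beta)-\phi_a(\beta)\bigr)\le D(z_0),
\end{align*}
so that
\begin{align*}
\Delta\Phi_a(q)-\Delta\phi_a(q)\le \bigl[D(z_0)-D(z_+)\bigr]+\bigl[D(z_0)-D(z_-)\bigr].
\end{align*}

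Next I locate the three minimizers, using \eqref{explicit expression of m_sc}.
\begin{itemize}
\item Since $\theta_+>1$, the semicircle derivative is positive on $[a,\infty)$, so $z_+=a$.
\item For $\theta_-=1-\delta$, the unconstrained minimizer is $\theta_-+\theta_-^{-1}=2+\delta^2+O(\delta^3)$. Hence $z_-=\max\{a,\,2+\delta^2+O(\delta^3)\}$, and in every case $a\le z_-\le a+\delta^2$ up to lower-order terms.
\item For the empirical problem at $\theta_0=\beta>1$, I will show that $z_0=a$. This requires $m_N(z)=N^{-1}\sum_i(z-\lambda_i)^{-1}<\beta$ for $z\ge a$. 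I expect to get it on $\mathcal E_\epsilon$ by integrating by parts against the counting function: $|m_N(z)-m_{sc}(z)|\le (\mathcal D_N/N)\,(z-\max\{\lambda_1,2\})^{-1}$. By \eqref{good event} this is at most $\tfrac14(\epsilon\delta^2/2)^{1/2}$, which is small compared with $\beta-m_{sc}(z)\ge \beta-1$ plus the square-root gain of $1-m_{sc}(z)$ above the edge. This verification is the step I expect to be the main obstacle.
\end{itemize}

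Granting $z_0=z_+=a$, the first bracket vanishes. The remaining term is
\begin{align*}
D(a)-D(z_-)=\int\log\frac{a-x}{z_--x}\,d(\mu_N-\mu_{sc})(x).
\end{align*}
Integrating by parts against the tail-counting functions, this is bounded by $N^{-1}\mathcal D_N$ times the total variation of $x\mapsto\log\bigl((z_--x)/(a-x)\bigr)$ on $(-\infty,e]$, where $e=\max\{\lambda_1,2\}=2+x_N$. That function is monotone, so its variation is at most
\begin{align*}
\log\frac{z_--e}{a-e}\le\log\frac{\epsilon\delta^2/2+\delta^2(1+o(1))}{\epsilon\delta^2/2}\le\log\frac4\epsilon
\end{align*}
for $\epsilon<1$. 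Multiplying by $N/2$ gives $\frac N2(\Delta\Phi_a-\Delta\phi_a)\le\frac{\mathcal D_N}{2}\log\frac4\epsilon$, as claimed.

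If $z_0=a$ cannot be verified cleanly, the fallback is to bound $D(z_0)-D(a)$ by the same total-variation argument. For this I would use an a priori localization $z_0\le a+C\delta^2$, obtained from the saddle equation on $\mathcal E_\epsilon$. The cost would be a larger constant in front of $\mathcal D_N$, and the downstream use of the lemma in Lemma~\ref{A upper bound main proof} tolerates such a constant.
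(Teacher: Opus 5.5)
Your proposal is correct and follows essentially the paper's proof. It makes the same reduction to $D(a)-D(z_-)$ with $z_-=\max\{a,2+\delta^2/(1-\delta)\}$, and it uses the same integration-by-parts bound $|m_N(z)-m_{sc}(z)|\le \mathcal D_N/\bigl(N(z-2-x_N)\bigr)$. That bound is combined with the square-root gap $1-m_{sc}(a)\ge\frac12\sqrt{\epsilon/2}\,\delta$ to pin the empirical minimizer at $a$; this is exactly the paper's Lemma~\ref{short increment endpoint estimates}. The same monotone total-variation estimate then yields $\log(4/\epsilon)$.

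Your version is slightly more economical. You only need the empirical minimizer at $\theta=\beta$ to sit at $a$, because the $\beta(1+q)$ term only requires the trivial upper bound $\Phi_a(\beta(1+q))\le G_{\beta(1+q)}(a)$. Your fallback is unnecessary, since the main route closes.
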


    \begin{lemma}\label{semicircle boundary shift}
        For $a=2+x_N+\frac{\epsilon}{2}\delta^2$,
        \begin{align*}
            \frac N2\left(\Delta\phi_a(q)-\Delta\phi_{2+x_N}(q)\right)\le\frac{\epsilon}{4}N\delta^3.
        \end{align*}
    \end{lemma}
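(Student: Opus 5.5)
The plan is to compute $\Delta\phi_x(q)$ in closed form as a function of the lower boundary $x\ge2$, show that it is nondecreasing in $x$, and bound its slope by $\delta$. The conclusion will then follow by integrating the slope over the interval $[2+x_N,\,a]$, whose length is $\tfrac{\epsilon}{2}\delta^2$.

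\emph{Step 1: the two supercritical temperatures are pinned at the boundary.} In the present range $\beta(1+q)\ge\beta\ge1$ and $\theta:=\beta(1-q)=1-\delta\in(0,1)$. By \eqref{explicit expression of m_sc}, for every $\vartheta\ge1$ and $z\ge2$,
\[
\frac{d}{dz}\{\vartheta z-l_{sc}(z)\}=\vartheta-m_{sc}(z)\ge\vartheta-1\ge0,
\]
since $m_{sc}(z)=(z-\sqrt{z^2-4})/2\le1$. Hence for $\vartheta\ge1$ the infimum defining $\phi_x(\vartheta)$ is attained at $z=x$, so $\phi_x(\vartheta)=\vartheta x-l_{sc}(x)$. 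Substituting $\vartheta=\beta(1+q)$ and $\vartheta=\beta$, the linear terms combine to
\[
\phi_x(\beta(1+q))-2\phi_x(\beta)=-\bigl(\theta x-l_{sc}(x)\bigr).
\]
With $g(z)=\theta z-l_{sc}(z)$ this gives
\[
\Delta\phi_x(q)=h(x):=\inf_{z\ge x}g(z)-g(x).
\]

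\emph{Step 2: monotonicity and a slope bound for $h$.} The function $g$ is convex on $[2,\infty)$, and its derivative $g'(z)=\theta-m_{sc}(z)$ is increasing. Since $m_{sc}(2)=1>\theta$, $g$ has a unique minimizer $z^*=\theta+\theta^{-1}>2$.
\begin{itemize}
\item For $x\ge z^*$, the infimum is $g(x)$, so $h(x)=0$.
\item For $x<z^*$, $h(x)=g(z^*)-g(x)$, so $h'(x)=m_{sc}(x)-\theta$.
\end{itemize}
In both cases $h$ is Lipschitz with $0\le h'(x)\le(m_{sc}(x)-\theta)_+\le1-\theta=\delta$.

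\emph{Step 3: integrate.} Using the slope bound over $[2+x_N,\,a]$,
\[
\Delta\phi_a(q)-\Delta\phi_{2+x_N}(q)=\int_{2+x_N}^{a}h'(x)\,dx\le\delta\,(a-2-x_N)=\frac{\epsilon}{2}\delta^3.
\]
Multiplying by $N/2$ gives the claim.

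The only point needing care is Step 1: one must check that $\beta\ge1$ and $\beta(1-q)<1$ hold throughout $u_\beta\le q\le v_\beta$. This is exactly the relation $\beta(1-q)=1-\delta$ with $\delta>0$ recorded before \eqref{semicircle deterministic G0}. The bound holds deterministically, for every realization of $x_N\ge0$, so no random-matrix input is required.
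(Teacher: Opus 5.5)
Your proof is correct and follows the paper's argument: your $h(x)=\Delta\phi_x(q)$ differs from $\frac{2}{N}\mathcal R_N(x-2)$ only by the constant $\Delta\phi_2(q)$, and your slope bound $0\le h'\le\delta$ is exactly the derivative bound \eqref{R derivative bound}, which the paper integrates over an interval of length $\frac{\epsilon}{2}\delta^2$. The only difference is that you re-derive this slope bound directly, by pinning the two supercritical infima at the boundary, instead of quoting the explicit formula \eqref{R explicit formula}.
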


    The three lemmas above are proved in Subsection~\ref{subsec:A-auxiliary-lemma-proofs}.

    By \eqref{A three term decomposition} and Lemma~\ref{constrained variational comparison}, \ref{constrained short increment}, \ref{semicircle boundary shift},
    on $\mathcal E_\epsilon$ we obtain
    \begin{align}\label{A good event bound} 
        \mathcal A_N(q)\le\frac{\mathcal D_N}{2}\log\frac4\epsilon+\frac{\epsilon}{4}N\delta^3.
    \end{align}

    By \eqref{good event} and $2^{-3/2}\sqrt\epsilon\log(4/\epsilon)\leq 1$ for $0<\epsilon<1$,
    \begin{align*}
        \frac{\mathcal D_N}{2}\log\frac4\epsilon\leq\frac{\epsilon}{4}N\delta^3.
    \end{align*}

    Hence the right-hand side of \eqref{A good event bound} is at most $\epsilon N\delta^3/2$, and therefore
    \begin{align}\label{A good event exponential}
        \mathbb E\left[e^{p\mathcal A_N(q)}{\bf1}_{\mathcal E_\epsilon}\right]\le\exp\left\{\frac{p\epsilon}{2}N\delta^3\right\}.
    \end{align}

    It remains to control $\mathcal E_\epsilon^c$. 
    For every fixed $z_0>\lambda_1$,
    \begin{align*}
        \Phi_{\lambda_1}\big(\beta(1+q)\big)+\Phi_{\lambda_1}\big(\beta(1-q)\big)
        &\leq \beta(1+q)z_0-L_N(z_0)+\beta(1-q)z_0-L_N(z_0)\\
        &=2\beta z_0-2L_N(z_0).
    \end{align*}
    Taking the infimum over $z_0>\lambda_1$ yields $\Delta\Phi_{\lambda_1}(q)\leq0$. 
    The same argument for $\phi_{2+x_N}$ gives $\Delta\phi_{2+x_N}(q)\leq0$. 
    Moreover, by \eqref{A and R definitions in main proof},
    \begin{align*}
        \frac N2\left(\Delta\phi_{2+x_N}(q)-\Delta\phi_2(q)\right)=\mathcal R_N(x_N).
    \end{align*}
    By \eqref{R explicit formula}, $\mathcal R_N(0)=0$, and \eqref{R derivative bound} gives $\mathcal R_N'(x)\geq0$. 
    Hence $\mathcal R_N(x_N)\geq0$, and therefore $\Delta\phi_{2+x_N}(q)\geq\Delta\phi_2(q)$. 
    Hence, by \eqref{semicircle deterministic G0}, for all large $N$,
    \begin{align}\label{A deterministic truncation} 
        \mathcal A_N(q)\le-\frac N2\Delta\phi_{2+x_N}(q)\le\frac N2\left[-\log(1-\delta)-\delta-\frac{\delta^2}{2}\right]\le\frac13N\delta^3.
    \end{align}

    For every fixed $s>0$, Lemma~\ref{uniform counting input} and Markov's inequality yield
    \begin{align}\label{A bad event probability}
        \mathbb P(\mathcal E_\epsilon^c)\le N^{C_s}\exp\left\{-\frac{s}{4}\left(\frac{\epsilon}{2}\right)^{3/2}N\delta^3\right\}.
    \end{align}

    For $s>\frac{4p}{3}\left(\frac{2}{\epsilon}\right)^{3/2}$, \eqref{A good event exponential}, \eqref{A deterministic truncation}, and \eqref{A bad event probability} give
    \begin{align*}
        \mathbb E e^{p\mathcal A_N(q)}
        &=\mathbb E\left[e^{p\mathcal A_N(q)}{\bf1}_{\mathcal E_\epsilon}\right]
        +\mathbb E\left[e^{p\mathcal A_N(q)}{\bf1}_{\mathcal E_\epsilon^c}\right]\\
        &\leq \exp\left\{\frac{p\epsilon}{2}N\delta^3\right\}
        +\exp\left\{\frac{p}{3}N\delta^3\right\}\mathbb P(\mathcal E_\epsilon^c)\\
        &\leq \exp\left\{\frac{p\epsilon}{2}N\delta^3\right\}
        +N^{C_s}\exp\left\{\left[\frac p3-\frac{s}{4}\left(\frac{\epsilon}{2}\right)^{3/2}\right]N\delta^3\right\}\\
        &\leq N^{C_{p,\epsilon}}\exp\left\{\frac{p\epsilon}{2}N\delta^3\right\}.
    \end{align*}
\end{proof}

\begin{lemma}[Soft-edge right-tail input]\label{edge tail input}
    There exist constants $c_0,C,x_0>0$ such that, for all $0\leq x\leq x_0$,
    \begin{align*}
        \mathbb{P}(\lambda_1\geq2+x)\leq Ce^{-c_0Nx^{3/2}}.
    \end{align*}
\end{lemma}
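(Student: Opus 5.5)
The bound is a uniform moderate-deviation estimate for the largest GOE eigenvalue in the range $N^{-2/3}\lesssim x\leq x_0$. I would obtain it from the Ledoux--Rider small-deviation inequality. The argument is cleaner if I derive it directly from the trace-moment method, since the $N^{-2/3}$ edge is precisely the regime that high moments capture.

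I would compare with $\mathbb E\operatorname{Tr}W^{2k}$ for $k$ of order $N^{2/3}$ up to $N$ times a small constant. The standard edge moment bound (Soshnikov; or Ledoux's recursion for GOE moments via the Harer--Zagier-type formula) gives, for $k\leq c_1N$,
\begin{align*}
    \mathbb E\operatorname{Tr}W^{2k}\leq C\,N\,k^{-3/2}\,4^{k}\,e^{Ck^3/N^2}.
\end{align*}
Markov's inequality then yields
\begin{align*}
    \mathbb P(\lambda_1\geq 2+x)\leq \frac{\mathbb E\operatorname{Tr}W^{2k}}{(2+x)^{2k}}
    \leq CNk^{-3/2}\exp\left\{Ck^3/N^2-2k\log(1+x/2)\right\}.
\end{align*}
For $x\leq x_0$ we have $\log(1+x/2)\geq x/4$. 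Choosing $k=\lfloor\eta N\sqrt x\rfloor$ with $\eta$ small makes the exponent at most $-\eta N x^{3/2}/2+C\eta^3Nx^{3/2}\leq -c_0Nx^{3/2}$. The constraint $k\leq c_1N$ holds once $x_0$ is small.

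The polynomial prefactor $Nk^{-3/2}$ equals $(N\,x^{3/2})^{-1/2}\cdot\eta^{-3/2}N^{-1/2}\cdot N$. In terms of $t=Nx^{3/2}$, it is of order $\eta^{-3/2}t^{-1/2}$ once the powers of $N$ are tracked: $k^{-3/2}N=\eta^{-3/2}N^{-1/2}x^{-3/4}=\eta^{-3/2}t^{-1/2}$. For $t\geq1$ this is bounded, so the claim follows with a constant $C$. For $t<1$, that is $x<N^{-2/3}$, the claim is trivial after enlarging $C$ so that $Ce^{-c_0}\geq1$. The case $k<1$ also falls into this trivial range.

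The main obstacle is the uniform moment bound $\mathbb E\operatorname{Tr}W^{2k}\lesssim Nk^{-3/2}4^ke^{Ck^3/N^2}$ for all $k\leq c_1N$, including the GOE diagonal convention $W_{ii}\sim\mathcal N(0,2/N)$. I would either cite Ledoux's GOE moment recursion, as used in Ledoux--Rider, "Small deviations for beta ensembles", or simply invoke their Theorem 1 directly. That theorem states $\mathbb P(\lambda_1\geq 2+x)\leq Ce^{-cNx^{3/2}}$ for $0<x\leq1$ for the $\beta=1$ Hermite ensemble, which is this GOE after scaling. Matching their normalization (spectrum edge at $2$, variance $1/N$ off-diagonal) is the only check required.
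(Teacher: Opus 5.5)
Your proposal is correct, and your fallback is exactly the paper's proof. The paper sets $H=\sqrt N\,W$ and notes that this is the $\beta=1$ Hermite ensemble in the Ledoux--Rider normalization (edge at $2\sqrt N$). It then applies their Theorem~1, $\mathbb P(\lambda_{\max}(H)\ge 2\sqrt N(1+\varepsilon))\le Ce^{-cN\varepsilon^{3/2}}$ for $0\le\varepsilon\le1$, with $\varepsilon=x/2$; nothing beyond that normalization check is needed.

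Your main line, Markov's inequality on $\operatorname{Tr}W^{2k}$ with $k=\lfloor\eta N\sqrt x\rfloor$, is a genuinely different route. It is the moment method Ledoux used for GUE and GOE, whereas Ledoux--Rider get the bound for all $\beta$ from the tridiagonal model. Your optimization is right. The first expression you write for $Nk^{-3/2}$ is garbled, but you then correct it to $\eta^{-3/2}t^{-1/2}$ with $t=Nx^{3/2}$, which is the value you use. This route has the merit of showing where $x^{3/2}$ comes from: the gain $kx$ is balanced against the $k^3/N^2$ edge correction at $k\asymp N\sqrt x$. It is not more elementary, though, because all the content sits in the GOE moment bound uniform in $k\le c_1N$, which you still have to cite.

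One caution about that bound. Unlike GUE, GOE moments carry a $1/N$ correction from non-orientable maps. For example, $\mathbb E\operatorname{Tr}W^4=2N+5+5/N$, and in general $\mathbb E\operatorname{Tr}W^{2k}$ has an $N^0$ term of order $4^k$. So the natural estimate is $C4^k(Nk^{-3/2}+1)e^{Ck^3/N^2}$ rather than the GUE form. This does not hurt your argument. With $y=k^3/N^2$ one has $Nk^{-3/2}e^{k^3/N^2}=y^{-1/2}e^{y}\ge\sqrt{2e}>1$, so the extra term is absorbed into your stated form after enlarging $C$.
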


The proof of Lemma~\ref{edge tail input} is given in Section~\ref{sec:random-matrix-input-proofs}.

\begin{proof}[Proof of Lemma~\ref{R upper bound main proof}]
    {
    We work in the range $u_{\beta}\leq q\leq v_{\beta}$, so that $\delta=\beta q-\beta+1$ satisfies $0<\delta=o(1)$ uniformly in $b_N$.
    }

    For $\theta>1$ and $z\geq2+x$, by \eqref{explicit expression of m_sc}
    \begin{align*}
        \frac{d}{dz}\{\theta z-l_{sc}(z)\}=\theta-m_{sc}(z)\geq\theta-1>0,
    \end{align*}
    thus
    \begin{align}\label{semicircle upper boundary infima}
        \phi_{2+x}(\beta)&=\inf_{z\geq2+x}\{\beta z-l_{sc}(z)\}=\beta(2+x)-l_{sc}(2+x),\\
        \phi_{2+x}(\beta(1+q))&=\inf_{z\geq2+x}\{\beta(1+q)z-l_{sc}(z)\}=\beta(1+q)(2+x)-l_{sc}(2+x).\label{semicircle upper boundary infima 2}
    \end{align}

    The unconstrained minimizer of $\theta z-l_{sc}(z)$ for $\theta=\beta(1-q)=1-\delta$ is
    \begin{align}\label{semicircle lower unconstrained minimizer}
        1-\delta+\frac{1}{1-\delta}=2+\frac{\delta^2}{1-\delta}>2.
    \end{align}

    Thus, if $0\leq x\leq\delta^2/(1-\delta)$, the constraint is inactive for $1-\delta$, 
    and direct substitution into the definition of $\mathcal{R}_N(x)$ and \eqref{semicircle deterministic G0} gives
    \begin{align*}
        \frac{2}{N}\mathcal{R}_N(x)&=\bigg[\beta(1+q)(2+x)-l_{sc}(2+x)+1+\frac{(1-\delta)^2}{2}+\log(1-\delta)\\
        &\qquad-2\big\{\beta(2+x)-l_{sc}(2+x)\big\}\bigg]-\left[\log(1-\delta)+\delta+\frac{\delta^2}{2}\right]\\
        &=\delta\cdot x-\int_0^x\left(1-m_{sc}(2+s)\right)ds.
    \end{align*}

    If $x\geq\delta^2/(1-\delta)$, all three infima defining $\Delta\phi_{2+x}(q)$ are attained at the common boundary point $2+x$, and hence
    \begin{align*}
        \Delta\phi_{2+x}(q)&=\phi_{2+x}(\beta(1+q))+\phi_{2+x}(\beta(1-q))-2\phi_{2+x}(\beta)\\
        &=\big[\beta(1+q)+\beta(1-q)-2\beta\big](2+x)\\
        &=0.
    \end{align*}

    The two expressions agree at $x=\delta^2/(1-\delta)$. 
    Hence
    \begin{align}\label{R explicit formula}
        \mathcal{R}_N(x)=N\begin{cases}\displaystyle \frac{\delta\cdot x}{2}-B(x),
                                &0\leq x\leq\frac{\delta^2}{1-\delta},\\[3mm]
                                \displaystyle \frac{\delta}{2}\frac{\delta^2}{1-\delta}-B\left(\frac{\delta^2}{1-\delta}\right),
                                &x\geq\frac{\delta^2}{1-\delta},
                          \end{cases}
    \end{align}
    where
    \begin{align*}
        B(x)=\frac12\int_0^x\left(1-m_{sc}(2+s)\right)ds=\frac14\int_0^x\left(\sqrt{s(4+s)}-s\right)ds.
    \end{align*}

    For $0\leq x\leq\delta^2/(1-\delta)$,
    \begin{align}\label{ineq of B(x)} 
        0\leq 1-m_{sc}(2+x) =\frac{\sqrt{x(4+x)}-x}{2}\leq\delta.
    \end{align}

    By \eqref{R explicit formula} and \eqref{ineq of B(x)}, we obtain
    \begin{align}\label{R derivative bound} 
        0\leq\mathcal{R}_N'(x)=\frac{N}{2}\left[\delta-\left(1-m_{sc}(2+x)\right)\right]\mathbf{1}_{0\leq x\leq\frac{\delta^2}{1-\delta}}\leq\frac{N\delta}{2}.
    \end{align}

    Since $\mathcal{R}_N(0)=0$, $\mathcal{R}_N(x)$ is nonnegative and nondecreasing in $x$. 
    Finally,
    \begin{align}\label{B lower bound} 
        B(x)\geq\frac14\int_0^x(2\sqrt{s}-s)ds=\frac{x^{3/2}}3-\frac{x^2}{8},
    \end{align}

    By Lemma~\ref{edge tail input}, for $0\leq x\leq\delta^2/(1-\delta)=o(1)$,
    \begin{align*}
        \mathbb{P}(x_N\geq x)\leq Ce^{-c_0Nx^{3/2}}
    \end{align*}
    holds for all sufficiently large $N$.
    Since $\mathcal{R}_N(0)=0$, \eqref{R explicit formula} gives
    \begin{align}\label{R tail integration bound}
        \mathbb{E}e^{r\mathcal{R}_N(x_N)}&=1+r\int_0^{\delta^2/(1-\delta)}\mathcal{R}_N'(x)e^{r\mathcal{R}_N(x)}\mathbb{P}(x_N\geq x)dx\notag\\
        &\leq 1+C r\int_0^{\delta^2/(1-\delta)}\mathcal{R}_N'(x)\exp\{r\mathcal{R}_N(x)-c_0Nx^{3/2}\}dx.
    \end{align}

    Let $x=\delta^2y^2$ with $0\leq y\leq(1-\delta)^{-1/2}$, \eqref{R explicit formula} and \eqref{B lower bound} give
    \begin{align}\label{R exponent bound}
        r\mathcal{R}_N(x)-c_0Nx^{3/2}&\leq N\left[\frac{r\delta x}{2}-\left(\frac r3+c_0\right)x^{3/2}+\frac r8x^2\right]\notag\\
        &=N\delta^3\left[\frac r2y^2-\left(\frac r3+c_0\right)y^3+\frac{r\delta}{8}y^4\right]\notag\\
        &\leq N\delta^3\left[\max_{u\geq0}\left\{\frac r2u^2-\left(\frac r3+c_0\right)u^3\right\}+o(1)\right]\notag\\
        &=N\delta^3\left[\frac{r^3}{6(r+3c_0)^2}+o(1)\right]. 
    \end{align}

    Also,
    \begin{align}\label{R derivative integral bound} 
        r\int_0^{\delta^2/(1-\delta)}\mathcal{R}_N'(x)dx\leq\frac{rN\delta^3}{2(1-\delta)}\leq N^{C_r}.
    \end{align}

    Combining \eqref{R tail integration bound}, \eqref{R exponent bound}, and \eqref{R derivative integral bound}, we obtain
    \begin{align*}
        \mathbb{E}e^{r\mathcal{R}_N(x_N)}\leq N^{C_r}\exp\left\{N\delta^3 \left[\frac{r^3}{6(r+3c_0)^2}+o(1)\right]\right\}.
    \end{align*}
\end{proof}

\begin{proof}[Proof of Lemma~\ref{density upper bound main proof}]
    Throughout this proof, we condition on eigenvalues $\lambda_1,\cdots,\lambda_N$.
    For
    \begin{align*}
        \theta\in\{\beta(1+q),\,\beta(1-q),\,\beta\},
    \end{align*}
    let $\gamma_\theta$ be the saddle point, which only depends on $\lambda_1,\cdots,\lambda_N$ by \eqref{saddle point equation}. Additionally,
    \begin{align*}
        I_{\theta}=\int_{\gamma_\theta-i\infty}^{\gamma_\theta+i\infty}\exp\left\{\frac{N}{2}\left(G_\theta(z)-\Phi_{\lambda_1}(\theta)\right)\right\}\frac{dz}{2\pi i}
        =\int_{\mathbb R}\exp\left\{\frac{N}{2}\left(G_\theta(\gamma_\theta+it)-G_\theta(\gamma_\theta)\right)\right\}\frac{dt}{2\pi},
    \end{align*}
    while
    \begin{align*}
        G_\theta(\gamma_\theta+it)-G_\theta(\gamma_\theta)
        =i\theta t-\frac1N\sum_{i=1}^N\log\left(\frac{\gamma_\theta+it-\lambda_i}{\gamma_\theta-\lambda_i}\right)
        =i\theta t-\frac1N\sum_{i=1}^N\log\left(1+\frac{it}{\gamma_\theta-\lambda_i}\right).
    \end{align*}

    Set
    \begin{align*}
        w_i=(\gamma_\theta-\lambda_i)^{-1},\qquad 1\leq i\leq N.
    \end{align*}

    Hence
    \begin{align*}
        I_\theta&=\int_{\mathbb R}\exp\left\{\frac{iN\theta t}{2}-\frac12\sum_{i=1}^N\log(1+itw_i)\right\}\frac{dt}{2\pi}\\
        &=\int_{\mathbb R}e^{itN\theta/2}\prod_{i=1}^N(1+itw_i)^{-1/2}\frac{dt}{2\pi}.
    \end{align*}

    By \eqref{saddle point equation},
    \begin{align*}
        \sum_{i=1}^Nw_i=N\theta.
    \end{align*}

    Throughout the range under consideration, $1/2\leq\theta\leq2$ for all sufficiently large $N$.

    Let $\xi_1,\ldots,\xi_N$ be independent standard Gaussian variables and put
    \begin{align*}
        S_\theta=\sum_{i=1}^Nw_i\xi_i^2.
    \end{align*}

    Its characteristic function is
    \begin{align*}
        \mathbb{E}e^{isS_\theta}=\prod_{i=1}^N\mathbb{E}e^{isw_i\xi_i^2}=\prod_{i=1}^N(1-2isw_i)^{-1/2},
    \end{align*}
    which is integrable for $N$ large.

    If $f_\theta$ denotes the density of $S_\theta$, Fourier inversion and the change of variables $t=-2s$ give
    \begin{align}\label{density-Fourier}
        2f_\theta(N\theta)=\frac1\pi\int_{\mathbb{R}}e^{-isN\theta}\prod_{i=1}^N(1-2isw_i)^{-1/2}ds
        =\int_{\mathbb{R}}e^{itN\theta/2}\prod_{i=1}^N(1+itw_i)^{-1/2}\frac{dt}{2\pi}
        =I_\theta.
    \end{align}

    We prove the uniform bounds 
    \begin{align}\label{density-two-sided} 
        \frac cN\le f_\theta(N\theta)\le C(1+\|W\|_{\mathrm{op}}).
    \end{align}

    For the upper bound, order the weights as $w_1\ge w_2\ge\cdots\ge w_N$, and write
    \begin{align*}
        Y=w_1\xi_1^2+w_2\xi_2^2,\qquad T=\sum_{i=3}^Nw_i\xi_i^2.
    \end{align*}

    For $x>0$, the convolution formula gives
    \begin{align*}
        f_Y(x)&=\frac1{2\pi\sqrt{w_1w_2}}\int_0^x\frac{\exp\{-u/(2w_1)-(x-u)/(2w_2)\}}{\sqrt{u(x-u)}}\,du\\
        &\leq\frac1{2\pi\sqrt{w_1w_2}}\int_0^x\frac{du}{\sqrt{u(x-u)}}\\
        &=\frac1{2\sqrt{w_1w_2}}.
    \end{align*}

    Since $Y$ and $T$ are independent,
    \begin{align*}
        f_\theta(x)=(f_Y*\mathcal{L}(T))(x)=\mathbb{E}\left[f_Y(x-T)\right]\leq\|f_Y\|_\infty,\qquad x\in\mathbb{R}.
    \end{align*}

    Thus the first two weighted chi-square variables control the density of the full sum. The saddle point equation implies
    \begin{align*}
        w_1\ge\frac1N\sum_iw_i=\theta,
    \end{align*}
    and hence $\gamma_\theta-\lambda_1\le\theta^{-1}\le2$. 
    Also,
    \begin{align*}
        w_1^{-1}\leq w_2^{-1}=\gamma_\theta-\lambda_2 \le\gamma_\theta-\lambda_1+\lambda_1-\lambda_2 \le2+2\|W\|_{\mathrm{op}}.
    \end{align*}

    Consequently,
    \begin{align*}
        f_\theta(N\theta)\le\frac C{\sqrt{w_1w_2}}\le C(1+\|W\|_{\mathrm{op}}).
    \end{align*}

    For the lower bound, write
    \begin{align*}
        V=\sum_{i=2}^Nw_i(\xi_i^2-1),\qquad\sigma^2=2\sum_{i=2}^Nw_i^2,\qquad r=\frac{w_1}{\sigma}.
    \end{align*}

    Since
    \begin{align*}
        S_\theta=N\theta-w_1+V+w_1\xi_1^2,
    \end{align*}
    conditioning on $V$ gives
    \begin{align}\label{conditional-density}
        f_\theta(N\theta)=\mathbb{E}\left[f_{w_1\xi_1^2}(w_1-V)\mathbf{1}_{\{V<w_1\}}\right]
        =\mathbb E\left[\frac{\exp\{-(w_1-V)/(2w_1)\}}{\sqrt{2\pi w_1(w_1-V)}}\mathbf{1}_{\{V<w_1\}}\right].
    \end{align}

    Suppose first that $r\le r_0$, where $r_0>0$ is a sufficiently small numerical constant. 
    Let $\Phi$ denote the distribution function of a standard normal random variable. 
    Berry--Esseen theorem gives
    \begin{align*}
        \sup_x\left|\mathbb P(V/\sigma\le x)-\Phi(x)\right|\le C\frac{\sum_{i=2}^Nw_i^3}{(\sum_{i=2}^Nw_i^2)^{3/2}}\le Cr.
    \end{align*}

    thus for any fixed $K>0$,
    \begin{align*}
        \mathbb P(V\leq w_1/2)\ge \Phi(r/2)-Cr,\quad \mathbb{P}(V<-Kw_1)\leq\Phi(-Kr)+Cr.
    \end{align*}

    Let $K\geq 1/2$ be sufficiently large, then choose $r_0$ such that $Kr_0\leq 1$, then
    \begin{align*}
        \mathbb{P}(-Kw_1\leq V\leq w_1/2)\geq \int_{-Kr}^{r/2}\frac{e^{-x^2/2}}{\sqrt{2\pi}}dx-2Cr\geq \frac{(K+\frac{1}{2})e^{-1/2}r}{\sqrt{2\pi}}-2Cr\geq Cr.
    \end{align*}

    On this event the integrand in \eqref{conditional-density} is at least $C/w_1$, and hence $f_\theta(N\theta)\ge C/\sigma$.

    If $r>r_0$, then, for a sufficiently large fixed $K$, Chebyshev's inequality and Cantelli's inequality give
    \begin{align*}
        \mathbb P(-K\sigma\le V\le w_1/2)
        &\geq1-\mathbb P(V<-K\sigma)-\mathbb P(V>w_1/2)\\
        &\geq1-K^{-2}-\frac1{1+r_0^2/4}=:c_{r_0}>0.
    \end{align*}

    Thus on this event $f_\theta(N\theta)\ge c/w_1$. 
    Combining the two cases,
    \begin{align*}
        f_\theta(N\theta)\ge\frac c{\max\{w_1,\sigma\}}.
    \end{align*}

    Finally,
    \begin{align*}
        w_1\le N\theta,\qquad\sigma^2\le2w_1(N\theta-w_1)\le\frac{(N\theta)^2}{2}.
    \end{align*}

    Since $N\theta\le2N$, the lower bound in \eqref{density-two-sided} follows.

    Using \eqref{density-Fourier} and \eqref{density-two-sided},
    \begin{align*}
        \exp\{p\mathcal E_N(q)\}&=\left(\frac{I_{\beta(1+q)}I_{\beta(1-q)}}{I_\beta^2}\right)^p\\
        &\le C_pN^{2p}(1+\|W\|_{\mathrm{op}})^{2p}.
    \end{align*}

    Taking expectations and using Lemma~\ref{op upper bound main proof} proves the claim.
\end{proof}

\begin{proof}[Proof of Lemma~\ref{op upper bound main proof}]
    Write $W=(G+G^T)/\sqrt{2N}$, where $G$ has independent standard Gaussian entries. 
    Then $\Vert W\Vert_{op}\leq \sqrt{2/N}\Vert G\Vert_{op}$. 
    The Gaussian singular value tail in \cite{DavidsonSzarek} gives
    \begin{align*}
        \mathbb{P}(\Vert G\Vert_{op}\geq 2\sqrt N+t)\leq e^{-t^2/2},\qquad t\geq0.
    \end{align*}

    Hence
    \begin{align*}
        \mathbb{P}(\Vert W\Vert_{op}\geq 3+t)\leq C e^{-cNt^2},\qquad t\geq0.
    \end{align*}

    Integrating this tail gives, for every fixed $p>0$,
    \begin{align*}
        \mathbb{E}e^{p\Vert W\Vert_{op}}\leq e^{3p}+p\int_3^{\infty}e^{pt}\mathbb{P}(\Vert W\Vert_{op}\geq t)dt\leq C_p,
    \end{align*}
    uniformly in $N$.
\end{proof}

\subsection{Proofs of the three auxiliary lemmas in the proof of Lemma~\ref{A upper bound main proof}}\label{subsec:A-auxiliary-lemma-proofs}

\begin{proof}[Proof of Lemma~\ref{constrained variational comparison}]
    Since $a>\lambda_1$, let $\gamma_\theta$ be the unique minimizer of $G_\theta$ on $(\lambda_1,\infty)$, and put
    \begin{align*}
        d_a(\theta)=\Phi_a(\theta)-\Phi_{\lambda_1}(\theta)
        =\begin{cases}
            0,& a\leq\gamma_\theta,\\
            G_\theta(a)-G_\theta(\gamma_\theta),& a>\gamma_\theta.
         \end{cases}
    \end{align*}

    Differentiating the identity in \eqref{saddle point equation} with respect to $\theta$ gives
    \begin{align*}
        \gamma_\theta'=-\left[\frac1N\sum_{i=1}^N\frac{1}{(\gamma_\theta-\lambda_i)^2}\right]^{-1}<0.
    \end{align*}

    Consequently, at every point other than the transition point $\gamma_\theta=a$,
    \begin{align*}
        d_a'(\theta)&=\mathbf{1}_{\{\gamma_\theta<a\}}(a-\gamma_\theta-\gamma_{\theta}'G'_{\theta}(\gamma_{\theta}))=(a-\gamma_\theta)_+,\\
        d_a''(\theta)&={\bf1}_{\{\gamma_\theta<a\}}\left[\frac1N\sum_{i=1}^N\frac{1}{(\gamma_\theta-\lambda_i)^2}\right]^{-1}\geq0.
    \end{align*}

    At $\gamma_{\theta}$ the two expressions for $d_a$ and $d_a'$ agree, hence $d_a$ is convex, which gives
    \begin{align*}
        d_a\big(\beta(1+q)\big)+d_a\big(\beta(1-q)\big)-2d_a(\beta)\geq0.
    \end{align*}

    By the definitions of the two three-temperature differences,
    \begin{align*}
        \Delta\Phi_a(q)-\Delta\Phi_{\lambda_1}(q)=d_a\big(\beta(1+q)\big)+d_a\big(\beta(1-q)\big)-2d_a(\beta).
    \end{align*}

    This proves $\Delta\Phi_{\lambda_1}(q)\leq\Delta\Phi_a(q)$.
\end{proof}

\begin{proof}[Proof of Lemma~\ref{constrained short increment}]
    By definition of $\Delta\Phi_a$ and $\Delta\phi_a$,
    \begin{align}\label{Delta Phi-Delta phi}
        \Delta\Phi_a(q)-\Delta\phi_a(q)=
        &\Phi_a\big(\beta(1+q)\big)+\Phi_a\big(\beta(1-q)\big)-2\Phi_a(\beta)\nonumber\\
        &-\phi_a\big(\beta(1+q)\big)-\phi_a\big(\beta(1-q)\big)+2\phi_a(\beta).
    \end{align}

    Put $z_-=\max\left\{a,2+\frac{\delta^2}{1-\delta}\right\}$.
    By \eqref{semicircle upper boundary infima} and \eqref{semicircle upper boundary infima 2}, with $2+x=a$,
    \begin{align*}
        \phi_a(\beta)&=\beta a-l_{sc}(a),
        &\phi_a\big(\beta(1+q)\big)&=\beta(1+q)a-l_{sc}(a).
    \end{align*}

    By \eqref{semicircle lower unconstrained minimizer} and the definition of $z_-$,
    \begin{align*}
        \phi_a\big(\beta(1-q)\big)=\beta(1-q)z_--l_{sc}(z_-).
    \end{align*}

    To estimate the three empirical infima in \eqref{Delta Phi-Delta phi}, we use the following lemma.

    \begin{lemma}\label{short increment endpoint estimates}
        On $\mathcal E_\epsilon$,
        \begin{align}\label{short increment empirical upper infima}
            \Phi_a(\beta)=\beta a-L_N(a),
            \Phi_a\big(\beta(1+q)\big)=\beta(1+q)a-L_N(a).
        \end{align}

        Moreover,
        \begin{align}\label{short increment lower empirical upper} 
            \Phi_a\big(\beta(1-q)\big)\leq\beta(1-q)z_--L_N(z_-).
        \end{align}
    \end{lemma}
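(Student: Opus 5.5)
The plan is to show that on $\mathcal E_\epsilon$ the empirical function $G_\theta$ is nondecreasing on $[a,\infty)$ for $\theta\in\{\beta,\beta(1+q)\}$. Then the constrained infimum is attained at the boundary point $a$, which gives \eqref{short increment empirical upper infima}. The bound \eqref{short increment lower empirical upper} is immediate from the definition: since $z_-\ge a$, it is an admissible point in $\inf_{z\ge a}G_{\beta(1-q)}(z)$, so $\Phi_a(\beta(1-q))\le G_{\beta(1-q)}(z_-)$. No control on $\mathcal E_\epsilon$ is needed for this second claim.

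For the first claim, I would note that $G_\theta'(z)=\theta-\frac1N\sum_i(z-\lambda_i)^{-1}$ for $z>\lambda_1$, and that the empirical Stieltjes transform $m_N(z)=\frac1N\sum_i(z-\lambda_i)^{-1}$ is decreasing in $z$. Since $\theta\ge\beta>1$, it suffices to show $m_N(a)\le1$, or even $m_N(a)\le\beta$. Comparing with the semicircle, $m_{sc}(a)\le m_{sc}(2)=1$, and in fact $m_{sc}(2+y)\le 1-c\sqrt y$ for small $y$ by \eqref{ineq of B(x)}. With $y=a-2\ge\frac\epsilon2\delta^2$, this gives the margin $1-m_{sc}(a)\ge c\sqrt{\epsilon/2}\,\delta$.

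The main step is bounding $|m_N(a)-m_{sc}(a)|$ by $\mathcal D_N$. I would integrate by parts against the counting function:
\[
m_N(a)=\frac1N\int\frac{d(-\mathcal N_N(t))}{a-t}=\frac1N\int_{-\infty}^{a}\frac{\mathcal N_N(t)}{(a-t)^2}\,dt ,
\]
using that $\mathcal N_N(t)=0$ for $t>\lambda_1$ and $a>\lambda_1$. The semicircle has the analogous identity with $N\int_t^\infty\rho_{sc}$. Subtracting, I get
\[
|m_N(a)-m_{sc}(a)|\le\frac{\mathcal D_N}{N}\int_{-\infty}^{\lambda_1\vee 2}\frac{dt}{(a-t)^2}\le\frac{\mathcal D_N}{N(a-\lambda_1\vee2)}\le\frac{2\mathcal D_N}{N\epsilon\delta^2}.
\]
This uses $a-\max(\lambda_1,2)=\frac\epsilon2\delta^2$, since $a=2+x_N+\frac\epsilon2\delta^2$. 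On $\mathcal E_\epsilon$ we have $\mathcal D_N\le\frac14N(\frac\epsilon2\delta^2)^{3/2}$, so the error is at most $\frac14(\frac{\epsilon}{2})^{1/2}\delta$.

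I expect the main obstacle to be matching this against the margin. Since $\sqrt{y(4+y)}\ge2\sqrt y$, we get $1-m_{sc}(2+y)\ge\sqrt y-y/2\ge\frac12\sqrt y$ for small $y$. With $\sqrt y\ge\sqrt{\epsilon/2}\,\delta$, the margin is at least $\frac12\sqrt{\epsilon/2}\,\delta$, which exceeds the error $\frac14\sqrt{\epsilon/2}\,\delta$. Hence $m_N(a)<1<\beta$. I should double-check the constant in the error integral: the bound $\int_{-\infty}^{b}(a-t)^{-2}dt=1/(a-b)$ with $b=\max(\lambda_1,2)$ gives exactly the factor used above, so the constant $\frac14$ in \eqref{good event} is precisely what closes the argument. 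Therefore $G_\theta'(z)\ge\theta-m_N(a)>0$ for all $z\ge a$, and both infima in \eqref{short increment empirical upper infima} are attained at $z=a$.
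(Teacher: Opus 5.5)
Your proposal is correct and follows the paper's proof essentially step for step. The shared steps are: the integration by parts against the counting function giving $|L_N'(a)-m_{sc}(a)|\le \mathcal D_N/(N(\epsilon/2)\delta^2)\le\frac14\sqrt{\epsilon/2}\,\delta$ on $\mathcal E_\epsilon$; the edge margin $1-m_{sc}(a)\ge\frac12\sqrt{\epsilon/2}\,\delta$; the monotonicity of $L_N'$; and admissibility of $z_-\ge a$ for the second claim.

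One small fix is needed in the margin step. First use that $m_{sc}$ is decreasing to reduce to $y=\frac{\epsilon}{2}\delta^2$, as the paper does, and only then apply the small-$y$ bound $1-m_{sc}(2+y)\ge\frac12\sqrt y$. As written, you apply it at $y=a-2$, which need not be small because $x_N$ is random.
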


    The proof of Lemma~\ref{short increment endpoint estimates} is given in Subsection~\ref{subsec:A-auxiliary-lemma-proofs}.

    Consequently, the preceding semicircle identities and Lemma~\ref{short increment endpoint estimates} imply
    \begin{align}\label{short increment variational comparison}
        \Delta\Phi_a(q)-\Delta\phi_a(q)&\leq\big[\beta(1+q)a-L_N(a)+\beta(1-q)z_--L_N(z_-)-2\{\beta a-L_N(a)\}\big]\nonumber\\
        &\quad-\big[\beta(1+q)a-l_{sc}(a)+\beta(1-q)z_--l_{sc}(z_-)-2\{\beta a-l_{sc}(a)\}\big]\nonumber\\
        &=\big(L_N(a)-l_{sc}(a)\big)-\big(L_N(z_-)-l_{sc}(z_-)\big).
    \end{align}

    Let
    \begin{align*}
        \nu_N(dt)=\sum_{i=1}^N\delta_{\lambda_i}(dt)-N\rho_{sc}(t)dt,\qquad Q_N(t)=\nu_N([t,\infty)).
    \end{align*}

    Then $\nu_N(\mathbb R)=0$, $Q_N(t)=0$ for $t>2+x_N$, and $|Q_N(t)|\leq\mathcal D_N$. For $z\geq a$, put
    \begin{align*}
        f_z(t)=\log(z-t)-\log(a-t).
    \end{align*}

    Let $M>2+x_N$. Since $f_z(-\infty)=0$, $Q_N(M)=0$, and $Q_N(-R)=0$ for all sufficiently large $R$, integration by parts on $[-R,M]$, followed by $R\to\infty$, gives
    \begin{align*}
        N\left[\big(L_N(z)-l_{sc}(z)\big)-\big(L_N(a)-l_{sc}(a)\big)\right]
        =\int_{\mathbb R}f_z(t)\nu_N(dt)
        =\int_{-\infty}^{2+x_N}\left(\frac1{a-t}-\frac1{z-t}\right)Q_N(t)dt.
    \end{align*}

    Consequently,
    \begin{align}\label{short logarithmic potential increment} 
        N\left|\big(L_N(z)-l_{sc}(z)\big)-\big(L_N(a)-l_{sc}(a)\big)\right|\leq\mathcal D_N\log\frac{z-2-x_N}{(\epsilon/2)\delta^2}.
    \end{align}

    If $z_-=a$, the right-hand side of \,\eqref{short increment variational comparison} vanishes. 
    If $z_->a$, then
    \begin{align}\label{short increment endpoint ratio}
        1\leq\frac{z_--2-x_N}{a-2-x_N}=\frac{z_--2-x_N}{(\epsilon/2)\delta^2}\leq\frac{\delta^2/(1-\delta)}{(\epsilon/2)\delta^2}
        =\frac{2}{\epsilon(1-\delta)}\leq\frac4\epsilon,
    \end{align}
    for all sufficiently large $N$. 
    Taking $z=z_-$ in \eqref{short logarithmic potential increment}, and using \eqref{short increment variational comparison} and \eqref{short increment endpoint ratio}, gives
    \begin{align*}
        \frac N2\left(\Delta\Phi_a(q)-\Delta\phi_a(q)\right)\leq\frac{\mathcal D_N}{2}\log\frac4\epsilon.
    \end{align*}
\end{proof}

\begin{proof}[Proof of Lemma~\ref{semicircle boundary shift}]
    By the definition of \(\mathcal{R}_N\), and $a=2+x_N+\frac{\epsilon}{2}\delta^2$,
    \begin{align*}
        \frac N2\left(\Delta\phi_a(q)-\Delta\phi_{2+x_N}(q)\right)
        &=\frac N2\left[\left(\Delta\phi_a(q)-\Delta\phi_2(q)\right)-\left(\Delta\phi_{2+x_N}(q)-\Delta\phi_2(q)\right)\right]\\
        &=\mathcal R_N\left(x_N+\frac{\epsilon}{2}\delta^2\right)-\mathcal R_N(x_N)\\
        &=\int_{x_N}^{x_N+(\epsilon/2)\delta^2}\mathcal R_N'(x)dx\\
        &\leq\frac{\epsilon}{4}N\delta^3,
    \end{align*}
    where the integral estimate follows from \eqref{R explicit formula} and \eqref{R derivative bound}.
\end{proof}

\begin{proof}[Proof of Lemma~\ref{short increment endpoint estimates}]
    Let
    \begin{align}\label{short increment signed measure} 
        \nu_N(dt)=\sum_{i=1}^N\delta_{\lambda_i}(dt)-N\rho_{sc}(t)dt,\quad Q_N(t)=\nu_N([t,\infty)).
    \end{align}

    Thus
    \begin{align*}
        \mathcal D_N=\sup_{t\in\mathbb R}|Q_N(t)|,\qquad\mathcal E_\epsilon =\left\{\mathcal D_N\leq\frac14N\left(\frac{\epsilon}{2}\delta^2\right)^{3/2}\right\}.
    \end{align*}

    Moreover, $Q_N(t)=0$ for $t>2+x_N$ and $\nu_N(\mathbb R)=0$.

    Integrating by parts against \eqref{short increment signed measure} with $(a-t)^{-1}$ gives
    \begin{align*}
        \left|L_N'(a)-l_{sc}'(a)\right|\leq\frac{\mathcal D_N}{N}\int_{-\infty}^{2+x_N}\frac{dt}{(a-t)^2}=\frac{\mathcal D_N}{N(\epsilon/2)\delta^2}.
    \end{align*}

    Since $a=2+x_N+(\epsilon/2)\delta^2$, $x_N\geq0$, and $l_{sc}'=m_{sc}$ is decreasing on $(2,\infty)$, for all sufficiently large $N$,
    \begin{align*}
        l_{sc}'(a)\leq l_{sc}'\left(2+\frac{\epsilon}{2}\delta^2\right)
        =1-\frac{\sqrt{(\epsilon/2)\delta^2(4+(\epsilon/2)\delta^2)}-(\epsilon/2)\delta^2}{2}
        \leq 1-\frac12\sqrt{\frac{\epsilon}{2}}\delta.
    \end{align*}

    On $\mathcal E_\epsilon$,
    \begin{align}\label{short increment boundary slope}
        L_N'(a)\leq l_{sc}'(a)+\frac{\mathcal D_N}{N(\epsilon/2)\delta^2}
        \leq 1-\frac12\sqrt{\frac{\epsilon}{2}}\delta+\frac14\sqrt{\frac{\epsilon}{2}}\delta
        =1-\frac14\sqrt{\frac{\epsilon}{2}}\delta<1.
    \end{align}

    We have
    \begin{align*}
        L_N''(z)=-\frac1N\sum_{i=1}^N\frac1{(z-\lambda_i)^2}<0.
    \end{align*}

    Thus $L_N'$ is decreasing. 
    On $\mathcal E_\epsilon$, \eqref{short increment boundary slope} gives for $\theta\in\{\beta,\beta(1+q)\}$,
    \begin{align*}
        \frac{d}{dz}(\theta z-L_N(z))=\theta-L_N'(z)>0,\quad z\geq a,
    \end{align*}
    thus the two empirical infima in \eqref{short increment empirical upper infima} are attained at $a$. This proves \eqref{short increment empirical upper infima}.
    Since $z_-\geq a$, the definition of the empirical infimum gives
    \begin{align*}
        \Phi_a\big(\beta(1-q)\big)\leq\beta(1-q)z_--L_N(z_-),
    \end{align*}
    which is \eqref{short increment lower empirical upper}.
\end{proof}

\section{Proofs of the random-matrix input lemmas}\label{sec:random-matrix-input-proofs}

\begin{proof}[Proof of Lemma~\ref{uniform counting input}]
    We first work with the classical Gaussian ensembles whose GOE one-body weight is $e^{-x^2/2}$. 
    Let $M_m^{\mathrm R}\sim\mathrm{GOE}_m$ and $M_N^{\mathrm C}\sim\mathrm{GUE}_N$ denote the corresponding ensembles, 
    so their eigenvalue densities have one-body weights $e^{-x^2/2}$ and $e^{-x^2}$, respectively. 
    Thus
    \begin{align*}
        W\stackrel d=\sqrt{\frac2N}M_N^{\mathrm R}.
    \end{align*}

    Put $H=\sqrt{2/N}M_N^{\mathrm C}$. 
    We first prove a fixed-threshold estimate for $H$. 
    Put
    \begin{align*}
        C_N(t)=\#\{\lambda_i(H)\ge t\},\qquad\mu_{sc}(t)=\int_t^\infty\rho_{sc}(x)\,dx.
    \end{align*}

    The GUE eigenvalues form a finite-rank determinantal process. 
    Hence, by the Bernoulli decomposition for determinantal point processes \cite{HKPV},
    \begin{align*}
        C_N(t)\stackrel d=\sum_{j=1}^NB_j(t),
    \end{align*}
    where the $B_j(t)$'s are independent Bernoulli variables. 
    If their parameters are $\alpha_j(t)$, then for every fixed $u\in\mathbb R$,
    \begin{align}\label{GUE-Bernoulli-mgf}
        \log\mathbb E e^{u(C_N(t)-\mathbb EC_N(t))}&=\sum_j\left[\log\{1+\alpha_j(t)(e^u-1)\}-u\alpha_j(t)\right]\notag\\
        &\le c(u)\sum_j\alpha_j(t)(1-\alpha_j(t))\notag\\
        &=c(u)\operatorname{Var}C_N(t).                                  
    \end{align}

    Theorem~1.1 of \cite{GotzeTikhomirov} gives
    \begin{align}\label{GUE-count-mean}
        \sup_{t\in\mathbb R}|\mathbb EC_N(t)-N\mu_{sc}(t)|\le C.
    \end{align}

    By the symmetry of the GUE,
    \begin{align*}
        \operatorname{Var}C_N(-t)=\operatorname{Var}C_N(t).
    \end{align*}
    It is therefore enough to consider $t\ge0$.  We claim that there is a numerical constant $C$ such that
    \begin{align*}
        \sup_{t\ge0}\operatorname{Var}C_N(t)\le C\log N
    \end{align*}
    for all sufficiently large $N$.  Otherwise, there would exist a subsequence $N_k\to\infty$, thresholds $t_k\ge0$, and constants $C_k\to\infty$ such that
    \begin{align*}
        \operatorname{Var}C_{N_k}(t_k)>C_k\log N_k.
    \end{align*}
    The Bernoulli representation and \eqref{GUE-count-mean} give
    \begin{align*}
        \operatorname{Var}C_{N_k}(t_k)
        \leq\mathbb EC_{N_k}(t_k)
        \leq N_k\mu_{sc}(t_k)+C.
    \end{align*}
    Hence $t_k<2$ for all sufficiently large $k$.  If $N_k(2-t_k)^{3/2}$ were bounded along a further subsequence, then the semicircle-tail bound $\mu_{sc}(t)\leq C(2-t)^{3/2}$ for $0\leq t<2$ would give
    \begin{align*}
        \operatorname{Var}C_{N_k}(t_k)
        \leq C N_k(2-t_k)^{3/2}+C=O(1),
    \end{align*}
    again a contradiction.  We may therefore assume that
    \begin{align*}
        N_k(2-t_k)^{3/2}\longrightarrow\infty.
    \end{align*}
    Since $t_k/2\in[0,1)$ and
    \begin{align*}
        N_k(1-t_k/2)^{3/2}\longrightarrow\infty,
    \end{align*}
    Lemma~2.3 of \cite{Gustavsson}, after the normalization from its GUE normalization to $H$, gives
    \begin{align*}
        \frac{\operatorname{Var}C_{N_k}(t_k)}{\log N_k}
        =\frac{1+o(1)}{2\pi^2}\frac{\log\left(N_k(2-t_k)^{3/2}\right)}{\log N_k}
        \leq\frac{1+o(1)}{2\pi^2}\frac{\log(2^{3/2}N_k)}{\log N_k}
        \leq\frac1{2\pi^2}+o(1).
    \end{align*}
    Here the $o(1)$ is used only along the subsequence $(t_k)$, as permitted by Gustavsson's result.  This contradicts $C_k\to\infty$ and proves the claim.
    Thus
    \begin{align}\label{GUE-count-variance}
        \sup_{t\in\mathbb R}\operatorname{Var}C_N(t)\le C\log N.
    \end{align}

    Combining \eqref{GUE-Bernoulli-mgf}, \eqref{GUE-count-mean}, and \eqref{GUE-count-variance}, we obtain
    \begin{align}\label{GUE-fixed-t}
        \sup_{t\in\mathbb R}\mathbb E\exp\left\{ u(C_N(t)-N\mu_{sc}(t))\right\}\le N^{C_u}. 
    \end{align}

    We next transfer \eqref{GUE-fixed-t} to GOE. 
    Let $\widetilde M_{N+1}^{\mathrm R}$ be an independent copy of $M_{N+1}^{\mathrm R}$. 
    The Forrester--Rains identity in \cite[Theorem~5.2, equation~(5.9)]{ForresterRains} reads
    \begin{align*}
        \operatorname{even}\left(M_N^{\mathrm R}\cup\widetilde M_{N+1}^{\mathrm R}\right)\stackrel d=M_N^{\mathrm C}.
    \end{align*}

    This identity means that, if we take eigenvalues of $M_N^{\mathrm R}$ and $\widetilde M_{N+1}^{\mathrm R}$ in order to get a random point process with 
    $(2N+1)$ particles, then the $N$ even particles has the same distribution as eigenvalues of $M_N^{\mathrm C}$.

    Scaling every eigenvalue by $\sqrt{2/N}$, set
    \begin{align*}
        \widetilde W_{N+1}=\sqrt{\frac2N}\widetilde M_{N+1}^{\mathrm R}.
    \end{align*}

    For a fixed $t$, let $A(t)$ and $B(t)$ be the counts above $t$ for $W$ and $\widetilde W_{N+1}$, respectively, and let $C(t)$ be the count above $t$ for $H$.
    Ordering the superposed eigenvalues decreasingly gives
    \begin{align*}
        A(t)+B(t)=2C(t)+\varepsilon_t,\qquad\varepsilon_t\in\{0,1\}.
    \end{align*}

    Consequently,
    \begin{align*}
        (A-\mathbb EA)+(B-\mathbb EB)=2(C-\mathbb EC)+(\varepsilon_t-\mathbb E\varepsilon_t),
    \end{align*}
    and independence of $A$ and $B$ yields
    \begin{align*}
        \mathbb E e^{u(A-\mathbb EA)} \mathbb E e^{u(B-\mathbb EB)}\le e^{|u|}\mathbb E e^{2u(C-\mathbb EC)}.
    \end{align*}

    Since the second factor on the left is at least one by Jensen's inequality, \eqref{GUE-fixed-t} gives
    \begin{align}\label{GOE-centered-fixed-t}
        \sup_t\mathbb E e^{u(A(t)-\mathbb EA(t))}\le N^{C_u}. 
    \end{align}

    It remains to identify the centering. 
    Couple $\widetilde W_{N+1}$ with its upper-left $N\times N$ principal minor. 
    This minor has the same law as $W$, and Cauchy interlacing gives
    \begin{align*}
        |\mathbb EA(t)-\mathbb EB(t)|\le 1.
    \end{align*}

    Taking expectations in $A+B=2C+\varepsilon_t$ therefore shows
    \begin{align*}
        |\mathbb EA(t)-\mathbb EC(t)|\le 1.
    \end{align*}

    Together with \eqref{GUE-count-mean}, this gives
    \begin{align}\label{GOE-count-centering}
        \sup_t|\mathbb EA(t)-N\mu_{sc}(t)|\le C. 
    \end{align}

    Combining \eqref{GOE-centered-fixed-t} and \eqref{GOE-count-centering}, for every fixed $u\in\mathbb R$,
    \begin{align}\label{GOE-fixed-t}
        \sup_t\mathbb E\exp\{u(A(t)-N\mu_{sc}(t))\}\le N^{C_u}. 
    \end{align}

    Finally choose semicircle tail quantiles
    \begin{align*}
        2=t_0>t_1>\cdots>t_N=-2,\qquad N\mu_{sc}(t_j)=j,
    \end{align*}
    and write
    \begin{align*}
        Q_N(t)=\mathcal N_N(t)-N\mu_{sc}(t).
    \end{align*}

    Monotonicity of the two counting functions implies
    \begin{align*}
        \sup_{t\in\mathbb R}|Q_N(t)|\le 1+\max_{0\le j\le N}|Q_N(t_j)|.
    \end{align*}

    The same bound outside $[-2,2]$ follows from the endpoint counts.
    Therefore, by \eqref{GOE-fixed-t},
    \begin{align*}
        \mathbb E e^{s\sup_t|Q_N(t)|}
        &\le e^s\sum_{j=0}^N\mathbb E e^{s|Q_N(t_j)|}\\
        &\le e^s\sum_{j=0}^N\left(\mathbb E e^{sQ_N(t_j)}+\mathbb E e^{-sQ_N(t_j)}\right)
        \le N^{C_s}.
    \end{align*}

    This proves the lemma.
\end{proof}

\begin{proof}[Proof of Lemma~\ref{edge tail input}]
    Let $H=\sqrt N\,W$. Then $H$ is the $\beta$-Hermite ensemble with $\beta=1$ in the normalization of Ledoux--Rider. 
    Their right-tail estimate \cite[Theorem~1]{LedouxRider} gives, for $0\le\varepsilon\le1$,
    \begin{align*}
        \mathbb P\left(\lambda_{\max}(H)\ge2\sqrt N(1+\varepsilon)\right)\le C\exp\{-cN\varepsilon^{3/2}\}.
    \end{align*}

    Taking $\varepsilon=x/2$ proves, for $0\le x\le x_0$,
    \begin{align*}
        \mathbb P(\lambda_1(W)\ge2+x)\le C e^{-c_0Nx^{3/2}}.
    \end{align*}
\end{proof}

\section{\texorpdfstring{Proof of Proposition~\ref{L2 comparison} for $b<0$}{Proof of Proposition for b<0}}\label{sec:negative-window-proof}

\begin{proof}
    Let
    \begin{align*}
        \beta=1+bN^{-1/3}\sqrt{\log N},\qquad b<0,
    \end{align*}
    and put
    \begin{align*}
        \psi_N(q)=\rho_N(q)J_\beta(q),\qquad -1<q<1.
    \end{align*}

    Then $\psi_N$ is an even probability density. Put
    \begin{align*}
        \kappa_N=N(1-\beta^2)-3=(-2b+o(1))N^{2/3}\sqrt{\log N}>0.
    \end{align*}

    Define
    \begin{align*}
        \widetilde\psi_N(q)=e^{\kappa_Nq^2/2}\psi_N(q),\qquad -1<q<1.
    \end{align*}

    By \eqref{J local derivative bound}, for $0\le q<1$,
    \begin{align*}
        \frac{d}{dq}\log\widetilde\psi_N(q)
        &=-\frac{(N-3)q}{1-q^2}+\frac{d}{dq}\log J_\beta(q)+\kappa_Nq\\
        &\le-\frac{(N-3)q}{1-q^2}+N\beta^2q+\kappa_Nq\\
        &=-(N-3)\frac{q^3}{1-q^2}\le0.
    \end{align*}

    Thus $\widetilde\psi_N$ is nonincreasing on $[0,1)$. For $0<r<1$,
    \begin{align*}
        \frac12=\int_0^1e^{-\kappa_Nq^2/2}\widetilde\psi_N(q)\,dq
        \ge\widetilde\psi_N(r)\int_0^re^{-\kappa_Nq^2/2}\,dq,
    \end{align*}
    while
    \begin{align*}
        \frac12\int_{r\le|q|<1}\psi_N(q)\,dq&=\int_r^1e^{-\kappa_Nq^2/2}\widetilde\psi_N(q)\,dq
        \le\widetilde\psi_N(r)\int_r^1e^{-\kappa_Nq^2/2}\,dq.
    \end{align*}

    Therefore, for $\kappa_N^{-1/2}\le r<1$,
    \begin{align}\label{negative-tail}
        \int_{r\le|q|<1}\psi_N(q)\,dq
        \le\frac{\displaystyle\int_r^1e^{-\kappa_Nq^2/2}\,dq}{\displaystyle\int_0^re^{-\kappa_Nq^2/2}\,dq}
        \le Ce^{-\kappa_Nr^2/2}.                                           
    \end{align}

    \begin{align}\label{negative-moments}
        \int_{-1}^1|q|^k\psi_N(q)\,dq&\leq2\kappa_N^{-k/2}+2\int_{\kappa_N^{-1/2}}^1q^k\psi_N(q)\,dq\nonumber\\
        &=2\kappa_N^{-k/2}+\kappa_N^{-k/2}\int_{\kappa_N^{-1/2}\leq|q|<1}\psi_N(q)\,dq+k\int_{\kappa_N^{-1/2}}^1r^{k-1}\int_{r\leq|q|<1}\psi_N(q)\,dq\,dr\nonumber\\
        &\leq2\kappa_N^{-k/2}+C\kappa_N^{-k/2}e^{-1/2}+C_k\int_{\kappa_N^{-1/2}}^\infty r^{k-1}e^{-\kappa_Nr^2/2}\,dr\nonumber\\
        &\leq C_k\kappa_N^{-k/2}.                                         
    \end{align}
    
    Choose
    \begin{align*}
        r_N=L\sqrt{\frac{\log N}{\kappa_N}}=o(N^{-1/4}),\qquad\bar q_N=\min\{q\in\mathcal Q_N:q\ge r_N\},
    \end{align*}
    where $L>0$ is a sufficiently large fixed constant. We split the second-moment identity at the discrete cutoff $\bar q_N$: the lattice terms use $q\in\mathcal Q_N$, while the spherical terms are integrated over continuous overlap variables.
    Since $\bar q_N=o(N^{-1/4})$, the same argument as \eqref{one cell comparison detailed} for $b>0$ still holds. 
    For $q\in\mathcal Q_N$ with $|t-q|\leq N^{-1}$ and $|q|\leq\bar q_N$, the corresponding cellwise error satisfies
    \begin{align*}
        N^{-1}+|q|+q^2+Nq^4
        \leq C\left(N^{-1}+|t|+t^2+Nt^4\right).
    \end{align*}
    Hence \eqref{continuous identity} and \eqref{negative-moments} give
    \begin{align}\label{negative center quadrature bound}
        \left|\sum_{\substack{q\in\mathcal Q_N\\|q|\le\bar q_N}}p_N(q)J_\beta(q)-\int_{|t|\le\bar q_N+N^{-1}}\psi_N(t)\,dt\right|
        &\leq C\int_{-1}^1\left(N^{-1}+|t|+t^2+Nt^4\right)\psi_N(t)\,dt\notag\\
        &\leq C\left(N^{-1}+\kappa_N^{-1/2}+\kappa_N^{-1}+N\kappa_N^{-2}\right)\\
        &\leq C_bN^{-1/3}(\log N)^{-1/4}.\notag
    \end{align}

    By \eqref{negative-tail},
    \begin{align}\label{negative spherical tail}
        \int_{\bar q_N+N^{-1}\le|q|<1}\psi_N(q)\,dq \le Ce^{-\kappa_Nr_N^2/2}=CN^{-L^2/2}. 
    \end{align}

    For $q\in\mathcal Q_N$, the method-of-types bound
    \begin{align*}
        p_N(q)\le e^{-NI(q)},\qquad I(q)=\frac12\big[(1+q)\log(1+q)+(1-q)\log(1-q)\big],
    \end{align*}
    together with Lemma~\ref{K beta derivative bound} and $I(q)\ge q^2/2+q^4/12$, gives
    \begin{align*}
        p_N(q)J_\beta(q)\le\exp\left\{-\frac N2(1-\beta^2)q^2-\frac N{12}q^4\right\}.
    \end{align*}

    If $q\in\mathcal Q_N$ and $|q|\ge\bar q_N\ge r_N$, then
    \begin{align*}
        N(1-\beta^2)q^2=(\kappa_N+3)q^2\ge L^2\log N.
    \end{align*}

    Since $|\mathcal Q_N|=N+1$, choosing $L>1$ sufficient large yields
    \begin{align}\label{negative cube tail}
        \sum_{\substack{q\in\mathcal Q_N\\|q|\ge\bar q_N}}p_N(q)J_\beta(q)\le N^{-1/2}.
    \end{align}

    Finally, apply the second-moment identity \eqref{second moment quadrature identity}. 
    Its center is controlled by \eqref{negative center quadrature bound}, and the absolute values of its spherical and lattice tail contributions are bounded by \eqref{negative spherical tail} and \eqref{negative cube tail}, respectively.
    Therefore
    \begin{align*}
        \mathbb E[(X_{N,\beta}-1)^2]\le C_bN^{-1/3}(\log N)^{-1/4}.
    \end{align*}
\end{proof}

\backmatter

\textbf{Funding.}
Liu S.H. was partially supported by the Fundamental Research Funds for the Central Universities DUT25RC(3)133.
Shao Q.M. was partially supported by National Nature Science Foundation of China NSFC 12031005 and Shenzhen Outstanding Talents Training Fund, China.

\bibliography{sk_references}

\end{document}